\definecolor{azure(colorwheel)}{rgb}{0.0, 0.5, 1.0}
\definecolor{hanpurple}{rgb}{0.32, 0.09, 0.98}
\definecolor{iris}{rgb}{0.35, 0.31, 0.81}
\definecolor{byzantine}{rgb}{0.74, 0.2, 0.64}
\definecolor{ashgrey}{rgb}{0.7, 0.75, 0.71}
\definecolor{battleshipgrey}{rgb}{0.52, 0.52, 0.51}
\let\reftagform@=\tagform@
\def\tagform@#1{\maketag@@@{(\ignorespaces\textcolor{purple}{#1}\unskip\@@italiccorr)}}
\renewcommand{\eqref}[1]{\textup{\reftagform@{\ref{#1}}}}
\DeclareUrlCommand\ULurl@@{%
  \def\UrlLeft{\uline\bgroup}%
  \def\UrlRight{\egroup}}
\def\ULurl@#1{\hyper@linkurl{\ULurl@@{#1}}{#1}}
\DeclareRobustCommand*\ULurl{\hyper@normalise\ULurl@}
\def\lessim{\ \lower4pt\hbox{$
		\buildrel{\displaystyle <}\over\sim$}\ }
\def\gessim{\ \lower4pt\hbox{$\buildrel{\displaystyle >}
		\over\sim$}\ }
\numberwithin{equation}{section}
\renewcommand{\@secnumfont}{\bfseries}
\newtheorem{thm}{Theorem}[section]
\theoremstyle{definition}
\newtheorem{ass}[thm]{Assumption}
\newcommand{\Crt}{\mathrm{Crt}}
\newcommand{\indi}{\mathbf{1}}
\newtheorem{theorem}{Theorem}
\numberwithin{theorem}{section}
\newtheorem{corollary}[theorem]{Corollary}
\newtheorem{definition}[theorem]{Definition}
\newtheorem{lemma}[theorem]{Lemma}
\newtheorem{proposition}[theorem]{Proposition}
\newtheorem{remark}[theorem]{Remark}
\title{Sharp complexity asymptotics and topological trivialization for the $(p,k)$ spiked tensor model}
\author{Antonio Auffinger}
\address{Northwestern University}
\email{tuca@northwestern.edu}
\author{Gerard Ben Arous}
\address{New York University}
\email{benarous@cims.nyu.edu}
\author{Zhehua Li}
\address{Northwestern University}
\email{zhehua.li@northwestern.edu}
\begin{document}

\begin{abstract} We provide $O(1)$ asymptotics for the average number of deep minima of the $(p,k)$ spiked tensor model. We also derive an explicit formula for the limiting ground state energy on the $N$-dimensional sphere, similar to the work of Jagannath-Lopatto-Miolane\cite{JLM18}. Moreover, when the signal to noise ratio is large enough, the expected number of deep minima is asymptotically finite as $N$ tends to infinity and we determine its limit as the signal-to-noise ratio diverges. 
\end{abstract}
\maketitle
%

\section{Introduction}

Large dimensional rough landscapes play a central role in many different fields of science. Scientists very often face the question ``Given a function in many variables, how does one obtain significant statistical properties that discern noise to relevant data?" Relevant quantities, for instance, are the number of local minima at a given energy, the value of the absolute minimum, the number of saddles and their geometries. 

In this paper, we study one example of such landscapes, the spherical pure $p$-spin in the presence of a non-linear signal. Precisely, let $S^{N-1}\left(\sqrt{N}\right) = \{ \sigma \in \mathbb R^N : \sum_{i=1}^N \sigma_i^2 = N\}$ be the $N$-sphere of radius $\sqrt{N}$ and fix $\textbf{v}_0 \in S^{N-1}\left(\sqrt{N}\right)$. Given integers $p, k \geq 1$, $\lambda \in \mathbb R$, let
\begin{equation}
H_{N}(\sigma)=\frac{1}{N^{\frac{p-1}{2}}}\sum_{1\leq i_1,i_2,\dots,i_p\leq N}J_{i_1,i_2,\dots,i_p}\sigma_{i_1}\sigma_{i_2}\cdots \sigma_{i_p}-\frac{\lambda N}{k} \left(\frac{\sigma\cdot\textbf{v}_0}{N}\right)^{k},
\label{1}\end{equation}  
where $\sigma = (\sigma_1, \ldots, \sigma_N) \in  S^{N-1}\left(\sqrt{N}\right)$ and $(J_{i_1,i_2,\dots,i_p})_{1\leq i_1, \ldots, i_p \leq N}$ are independent standard gaussian random variables. We call $H_{N}$ the Hamiltonian of the $(p,k)$ spiked tensor model.


Without loss of generality, we refer to the direction of $\textbf{v}_0$ as the North Pole of the model and we let  
\begin{align*}
m(\sigma)=\sigma\cdot \textbf{v}_0/N \in [-1,1]
\end{align*} 
be the overlap of $\sigma$ with the signal $\textbf{v}_0$.
The aim of this paper is to investigate the landscape of the random function $H_{N}$ around its ground state energy
\begin{align*}
	L_N:=\min_{\sigma \in S^{N-1}\left(\sqrt{N}\right)} H_{N}(\sigma)
\end{align*} 
and the overlap between its ground state and the signal 
\begin{align*}
	m_N:=\left({\arg\min}_{\sigma \in S^{N-1}\left(\sqrt{N}\right)} H_{N}(\sigma)\right)\cdot \textbf{v}_0.
\end{align*} 
 For each $\lambda>0$, define
\begin{align}\label{eq:lowhighlatitude}
m_\lambda:=\min\left\{1, \left(\frac{\left(p-2\right)\sqrt{p}}{\lambda \sqrt{p-1}}\right)^{\frac{1}{k}}\right\}.
\end{align}
As illustrated in the transformative work of Ros et al. \cite{BBCR18} (see also Sections 2.3 and 2.4 in \cite{BMMN18}),  in the ``low-latitude'' region, $|m|\leq m_{\lambda}$, $H_N$ has a rugged energy landscape, with exponentially many critical values in $N$, resembling the spherical $p$-spin spin glass models \cite{AB13} while, in the ``high-latitude'' region, $|m|\geq m_{\lambda}$, it resembles a convex potential. The study of phase transitions in the topology of level sets of $H_N$, and limit theorems for $m_N$ and  $L_N$ have drawn a lot of attention recently, see for instance \cite{chen2019}, \cite{JLM18}, \cite{Montanari:2014}, \cite{perry2020}.
\begin{figure}[h]
	\centering
	\includegraphics[width=0.4\textwidth]{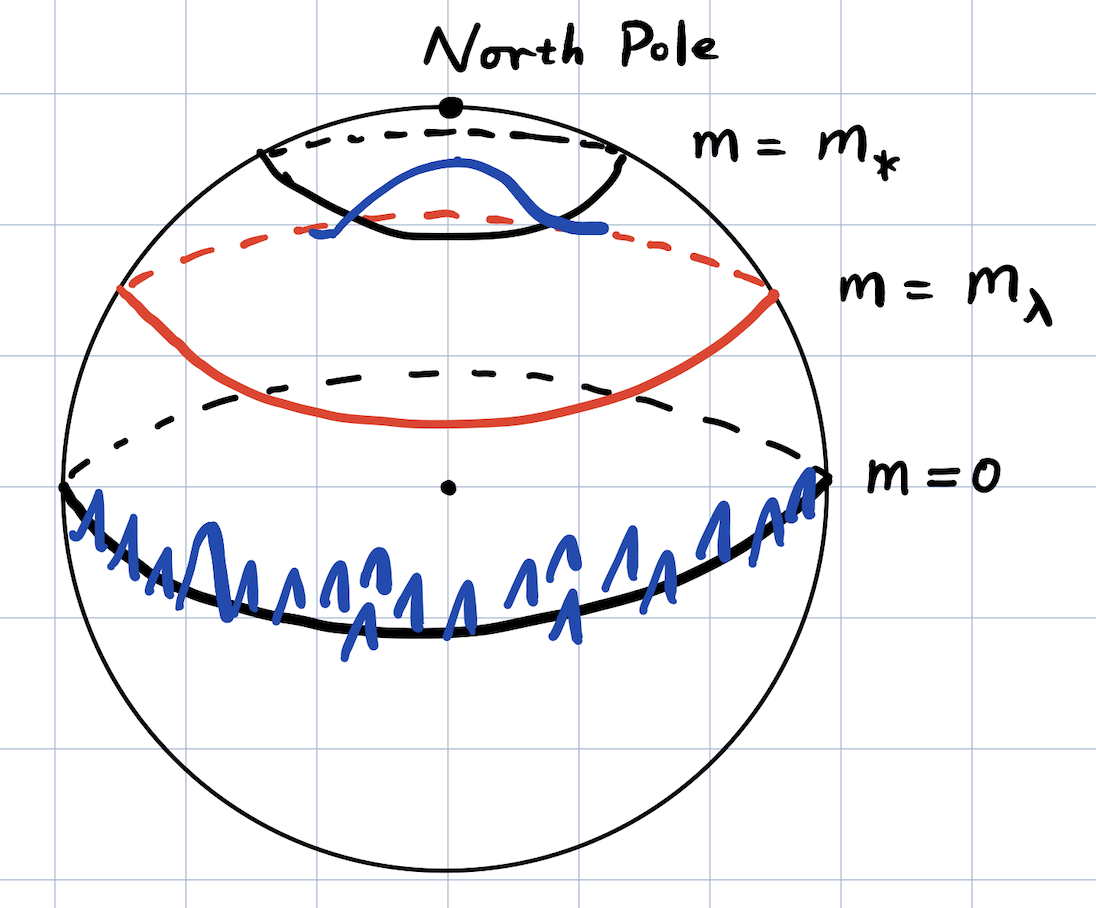}
	\caption{The landscape of $H_N\left(\sigma\right)$ on $S^{N-1}$. $\textbf{v}_0$ is the North Pole, $m=\left<\sigma, \textbf{v}_0\right>/N$ . The spikes around the equator represent numerous local maxima (minima) that are possibly exponential in $N$ in the ``low-latitude'' region $|m| \leq m_{\lambda}$. When $m\geq m_\lambda$, there are only a few critical points on a parallel $m=m_*$.   }
	\label{fig:landscape}
\end{figure}

Here, we focus on providing a better understanding of the model in the presence of a strong signal, that is, when $\lambda$ in  \eqref{1} is large.
In this case, low energy level sets of Hamiltonian $H_{N}$ will go through a phenomena called ``topology trivialization'', a term pioneered by Fyodorov and Le Doussal \cite{Fyodorov14}, and  discussed  in Fyodorov's remarkable work \cite{Fyodorov13, Fyodorov16}.
In short, for $\lambda$ large, one does not expect exponentially many critical values of $H_{N}$ with energy near the ground state energy $L_{N}$. Our first result computes the sharp asymptotics of the average number of critical values and shows that for $\lambda$ sufficiently large, they remain of constant order, not diverging with $N$. In this regime, all critical values are, with probability going to one, local minima. 

Let $\nabla$ and $\nabla^2$ denote the spherical gradient and Hessian with respect to the standard metric on $S^{N-1}\left(\sqrt{N}\right)$. For open sets $M \subseteq [-1,1]$, and $E  \subseteq \mathbb{R}$, we denote the total number of critical points of $H_N$ that have overlap with $\textbf{v}_0$ in $M$ and whose critical values are in $NE$ by
\[\Crt_{N}\left(M,E\right):=\sum_{\sigma\in S^{N-1}\left(\sqrt{N}\right),\nabla H_{N}\left(\sigma\right)=0}\indi_{\left\{\sigma\cdot\textbf{v}_0/N\in M\right\}}\cdot \indi_{\left\{H_N\left(\sigma\right)/N\in E\right\}}\]
and the corresponding number of critical points of index $l = 0, \ldots, N-1$ by
\[\Crt_{N,l}\left(M,E\right):=\sum_{\sigma\in S^{N-1}\left(\sqrt{N}\right),\nabla H_{N}\left(\sigma\right)=0}\indi_{\left\{\sigma\cdot\textbf{v}_0/N\in M\right\}}\cdot \indi_{\left\{H_N\left(\sigma\right)/N\in E\right\}}\indi_{\left\{i\left(\nabla^2 H_N\right)=l\right\}}.\]
Here, the index $i\left(\cdot \right)$ is the number of negative eigenvalues of the corresponding matrix. When $l=0$, $Ctr_{N,0}\left(M,E\right)$ counts the number of local minima that have overlap with $\textbf{v}_0$ in $M$ and whose critical values are in $NE$. Our first main result is the following.

\begin{theorem}\label{thm1}
	Let $M$ be an open interval of $\left(-1,1\right)$ and $E$ be a bounded open interval on $\mathbb{R}$ such that  
	\begin{equation}\label{cond:E}
	\sup E<-2\sqrt{\frac{p-1}{p}}-\left|\frac{1}{p}-\frac{1}{k}\right|\lambda.
	\end{equation}
There exists $c>0$ such that for any $\lambda\geq c$ 
	 there exists a constant $C=C\left(\lambda,p,k\right)$ that does not depend on $M$ and $E$ such that
	\[\lim_{N\to \infty}\mathbb{E}\left[\Crt_{N,0}\left(M,E\right)\right]=\lim_{N\to \infty}\mathbb{E}\left[\Crt_{N}\left(M,E\right)\right]=\begin{cases}
		C>0 & \text{ if }x_*\left(\lambda\right)\in E \text{ and }m_*\left(\lambda\right)\in M, \\
		0 & \text{otherwise}.\\
	\end{cases}  \]
\end{theorem}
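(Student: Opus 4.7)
The natural starting point is the Kac--Rice formula for $\mathbb{E}[\Crt_N(M,E)]$, which expresses it as an integral over $S^{N-1}(\sqrt{N})$ of the conditional mean of $|\det \nabla^2 H_N(\sigma)|\indi_{\{H_N(\sigma)/N\in E\}}$ given $\nabla H_N(\sigma)=0$, weighted by the Gaussian density of $\nabla H_N(\sigma)$ at $0$. Since the Gaussian part of $H_N$ is rotationally invariant and the spike depends on $\sigma$ only through $m(\sigma)$, the integrand is a function of the overlap alone. Integrating out the direction on each parallel reduces the formula to a one-dimensional integral
\[\mathbb{E}[\Crt_N(M,E)]=\int_M (1-m^2)^{(N-3)/2}\,\mathcal{A}_N(m,E)\,dm,\]
where $\mathcal{A}_N(m,E)$ collects the conditional expected determinant at a fixed representative $\sigma_m$ on the parallel at overlap $m$, the gradient density at $0$, and the area of the parallel.

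The next step is to compute the conditional law of the spherical Hessian at $\sigma_m$ given $H_N(\sigma_m)/N=x$ and $\nabla H_N(\sigma_m)=0$. A direct computation based on the Gaussian structure of $H_N$ shows that, after the standard normalization, $\nabla^2 H_N(\sigma_m)$ is distributed as $\sqrt{p(p-1)}\,G_{N-1}+\mu(m,x;\lambda,p,k)\,I_{N-1}$, where $G_{N-1}$ is a normalized GOE matrix (semicircle supported on $[-2,2]$) and the deterministic shift $\mu$ aggregates the Lagrange multiplier $-p x$, the spike Hessian at overlap $m$, and a spherical curvature correction. Inserting the classical large-deviation asymptotics for the expected absolute determinant of a shifted GOE (in the spirit of \cite{AB13}) then yields
\[\mathbb{E}[\Crt_N(M,E)]=\int_M\int_E e^{N\Sigma(m,x;\lambda,p,k)(1+o(1))}\,\psi_N(m,x)\,dx\,dm,\]
with subexponential prefactor $\psi_N$ and complexity function $\Sigma$ built from the sphere volume exponent, the log-determinant, and the Gaussian density of the gradient. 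Condition \eqref{cond:E} is calibrated so that $\mu(m,x;\lambda,p,k)>2\sqrt{p(p-1)}$ uniformly on $M\times E$; this places one on the ``deep minima'' branch of the determinant asymptotics, where the shifted GOE is positive definite with probability $1-o(1)$. The identity $\lim_{N\to\infty}\mathbb{E}[\Crt_{N,0}(M,E)]=\lim_{N\to\infty}\mathbb{E}[\Crt_{N}(M,E)]$ follows at once.

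For $\lambda$ sufficiently large, a variational analysis will show that $\Sigma(\cdot,\cdot;\lambda,p,k)\leq 0$ on its admissible region, with equality at a unique critical point $(m_*(\lambda),x_*(\lambda))\in(-1,1)\times\mathbb{R}$ whose Hessian is strictly negative definite. If $(m_*(\lambda),x_*(\lambda))\notin M\times E$, then $\sup_{M\times E}\Sigma<0$ and the double integral decays exponentially, giving the second case of the theorem. If $(m_*(\lambda),x_*(\lambda))\in M\times E$, Laplace's method at the non-degenerate maximum of $\Sigma$ produces an explicit finite constant $C(\lambda,p,k)$, obtained from the limit of $\psi_N$ at $(m_*,x_*)$ and the determinant of the negative Hessian of $\Sigma$ there.

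The main obstacle is the variational analysis: one must show that for $\lambda$ large the sum of the sphere term, the spike contribution, and the GOE log-determinant admits a unique global maximum at value zero on the deep-minima branch, and that this maximum is non-degenerate. This is precisely where the large-signal hypothesis intervenes, since for small $\lambda$ the pure $p$-spin complexity would dominate and produce exponentially many critical points below the threshold; the large-$\lambda$ regime suppresses this bulk complexity and leaves $\Sigma$ strictly negative except at the attracting pair $(m_*(\lambda),x_*(\lambda))$. Once the non-degeneracy is secured, the explicit form of $C(\lambda,p,k)$ is routine, combining the Jacobian of the change of variables $(m,x)\mapsto(\nabla H_N,H_N)$, the semicircle log-determinant evaluated at $\mu(m_*,x_*)$, and the curvature of $\Sigma$ at $(m_*,x_*)$.
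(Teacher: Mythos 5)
Your outline follows the same broad route as the paper: Kac--Rice, conditional law of the spherical Hessian, reduction to a two-variable complexity function, and Laplace's method at its maximizer $(m_*,x_*)$. However, there is a concrete error in the model of the conditional Hessian that would give the wrong constant $C$. You write that, conditioned on $\nabla H_N=0$ and $H_N/N=x$, the Hessian at overlap $m$ is distributed as a normalized GOE plus a \emph{scalar} shift $\mu(m,x)\,I_{N-1}$, with the "spike Hessian at overlap $m$" absorbed into $\mu$. That is not correct: the spike's contribution to $\mathbb{E}[\nabla^2 H_N]$ is a multiple of the identity \emph{plus} a genuine rank-one term, proportional to $-\lambda(k-1)m^{k-2}(1-m^2)\,e_{N-1}e_{N-1}^T$, pointing along the direction toward the North Pole within the tangent space. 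The conditioned Hessian is therefore a rank-one perturbation of a shifted GOE, not a shifted GOE. While this rank-one term does not change the exponential rate $\Sigma$, it does change the $O(1)$ prefactor: the paper's explicit $C(\lambda,p,k)$ contains the term $-\lambda(k-1)m_*^{k-2}J(m_*,y_*)$ in the numerator precisely from this rank-one piece, and this enters the finite-$\lambda$ value of $C$. Omitting it gives the wrong constant.

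Beyond that, two ingredients central to the theorem are left unexecuted. First, the subexponential prefactor $\psi_N$ is exactly where the hard work lies: the paper obtains the sharp $O(1)$ asymptotics not from large-deviation estimates (which only give the exponential rate) but from Plancherel--Rotach asymptotics for Hermite polynomials, after expressing $\mathbb{E}[\det(\text{shifted, rank-one-perturbed GOE})]$ via $h_{N-1}$ and $h_{N-2}$. You need that precision to produce a nontrivial finite $C>0$ and to distinguish it from $0$ or $\infty$. (The paper also first shows the index-$\ell\geq 1$ critical points are exponentially subdominant, which lets it pass from $|\det|$ to $\det$ via the mean Euler characteristic; your "positive definite with probability $1-o(1)$" argument plays the same role, but now must be checked in the presence of the negative rank-one perturbation, which pushes one eigenvalue down.) Second, the variational analysis --- showing $\Sigma\leq 0$ on the admissible region with a unique non-degenerate zero at $(m_*,x_*)$, and that $m_*$ solves $\lambda m^k/\sqrt p = m^2/\sqrt{1-m^2}$ --- is flagged as the "main obstacle" but not proved; in the paper this is done by optimizing in $y$ at fixed $m$ to get $y_*(m)$, substituting to obtain a one-variable function $g(m)$, and then reparametrizing by $v=\lambda m^k/(2\sqrt p)$ to show $g\leq 0$ with equality exactly at the stated $m_*$. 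As written, your argument identifies the shape of the proof but leaves both the rank-one correction and the sharp asymptotics, which are the crux of Theorem \ref{thm1}, unaddressed.
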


The constant $C$ is explicit and  we can further consider its asymptotics when $\lambda\to \infty$.  Let $m_*:=m_*\left(\lambda\right)$ be the largest solution of 
\begin{equation}\label{eq:minima}
\frac{\lambda m^k}{\sqrt{p}}=\frac{m^2}{\sqrt{1-m^2}},
\end{equation}
on $\left(0,1\right]$. Such $m_*$ exists when $\lambda\geq \lambda^{\left(1\right)}\left(p,k\right)=\begin{cases}
	0, & k=1,2 \\
	\sqrt{p\frac{\left(k-1\right)^{k-1}}{\left(k-2\right)^{k-2}}}, & k>2.\\
\end{cases}$.

\begin{theorem}\label{Prop_lamda}
Let $C\left(\lambda,p,k\right)>0$ be the constant given in Theorem \ref{thm1}. Then for any $p\geq 3$ and $k\geq 1$,
\begin{equation*}
\lim_{\lambda\to \infty}C\left(\lambda,p,k\right)=1.
\end{equation*}
\end{theorem}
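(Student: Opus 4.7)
The plan is to start from the explicit Kac-Rice expression for the limiting constant $C(\lambda,p,k)$ produced in the course of proving Theorem \ref{thm1}, and then perform a careful asymptotic expansion as $\lambda \to \infty$. The guiding heuristic is that for very strong signal the Hamiltonian $H_N$ behaves as a small random perturbation of the deterministic convex-type potential $-\frac{\lambda N}{k} m(\sigma)^k$, whose minimum on $S^{N-1}(\sqrt{N})$ is attained at a unique point near the North Pole; hence the average number of deep critical points should approach exactly one.

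First I would record the asymptotics of $m_*(\lambda)$ from its defining equation \eqref{eq:minima}. Rewriting it as $\sqrt{1-m^2}=\sqrt{p}\, m^{2-k}/\lambda$, the largest root on $(0,1]$ satisfies $m_*(\lambda) \to 1$ and $1-m_*(\lambda)^2 \sim p/\lambda^2$ as $\lambda\to\infty$. The deterministic signal dominates the ground state energy, so $x_*(\lambda)/\lambda \to -1/k$.

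Next I would examine the Hessian of $H_N$ at a critical point with overlap $m_*(\lambda)$ and energy $x_*(\lambda)$. After tangent-space reduction, it decomposes as (i) a GOE matrix coming from the derivatives of the pure $p$-spin term, (ii) a deterministic diagonal shift of order $\lambda m_*^{k-2}$ produced by the signal, and (iii) a rank-one correction proportional to $x_*$ from the spherical constraint. As $\lambda\to\infty$, the diagonal shift dominates both the GOE bulk and the rank-one correction, so the support of the limiting empirical spectral distribution of the rescaled Hessian collapses to a single positive point. This forces the conditional expected absolute determinant to factorize cleanly and yields a clean leading-order expression in the $\lambda\to\infty$ limit.

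The main obstacle is to show that, once the asymptotics above are inserted into the formula for $C(\lambda,p,k)$, the product of (a) the Gaussian density of $\nabla H_N$ at zero, (b) the Jacobian of the change of variables relating the Kac-Rice density to the complexity function $\Sigma(\lambda,m,x)$, (c) the Hessian determinant factor identified above, and (d) the Laplace-method second-derivative determinant of $\Sigma$ at its maximum, telescope to exactly $1$. The cancellations are forced by two global facts: in the trivialized regime $\Sigma$ attains its maximum value $0$ precisely at $(m_*(\lambda), x_*(\lambda))$, and in the $\lambda \to \infty$ regime the landscape converges to a strictly convex potential whose expected number of minima near the North Pole is one. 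Tracking these cancellations quantitatively, using $1-m_*^2 \sim p/\lambda^2$ to control all sub-leading corrections, will constitute the bulk of the calculation.
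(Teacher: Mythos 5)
Your strategy has the right shape---start from the explicit formula for $C(\lambda,p,k)$ delivered by Theorem \ref{thm1} and send $\lambda\to\infty$---and your asymptotics for $m_*$ are correct: rewriting \eqref{eq:minima} as $\sqrt{1-m^2}=\sqrt p\,m^{2-k}/\lambda$ gives $m_*\to1$ and $1-m_*^2\sim p/\lambda^2$, exactly as the paper uses. But from that point on the proposal stops short of an argument. The paper's proof is a direct computation with the closed-form constant
\[
C(\lambda,p,k)=\frac{\sqrt{2}\bigl(\sqrt{p}(1-m_*^2)^{-3/2}-\lambda(k-1)m_*^{k-2}\bigr)\,h(y_*)}{p\bigl(\sqrt{y_*^2-2}-y_*\bigr)\sqrt{\bigl|\partial_{yy}\tilde{S}_{p,k}(m_*,y_*)\,g''(m_*)\bigr|}},
\]
and it proceeds by deriving the individual asymptotics $y_*\sim-\tfrac{1}{\sqrt{2(p-1)}}(1-m_*^2)^{-1/2}$, $h(y_*)\to 2$, $\partial_{yy}\tilde S_{p,k}(m_*,y_*)\to-2(p-1)/p$, $\partial_{mm}\tilde S_{p,k}\sim-2(1-m_*^2)^{-2}$, $\partial_{my}\tilde S_{p,k}\sim\tfrac{k\sqrt{2(p-1)}}{p}(1-m_*^2)^{-1/2}$, hence $g''(m_*)\sim -2(1-m_*^2)^{-2}$, and then verifying that the powers of $(1-m_*^2)$ cancel exactly, leaving $1$. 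None of this is "forced" by the two global facts you invoke: the fact that $S_{p,k}$ attains its maximum value $0$ at $(m_*,x_*)$ only removes the exponential factor $e^{N\tilde S_{p,k}}$ and is already built into the formula for $C$; and the heuristic convexity of the large-$\lambda$ landscape gives no a priori reason the limit should be precisely $1$ rather than some other positive constant (there is a genuine competition between the diverging prefactor $\sqrt p(1-m_*^2)^{-3/2}-\lambda(k-1)m_*^{k-2}$, the diverging $y_*$ in the denominator, and the diverging $\sqrt{|g''|}$ from the Laplace method, all scaling like powers of $\lambda$). That cancellation must be checked by hand. Also, the paragraph about the Hessian's empirical spectral distribution collapsing to a point is a red herring here: the Hessian analysis is already encapsulated in the Hermite/Plancherel--Rotach asymptotics that produced the formula for $C$, so once you are expanding $C$ in $\lambda$ there is no further spectral argument to make. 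In short: same starting formula, same target asymptotics for $m_*$, but the step that actually proves $\lim C=1$---computing all the second derivatives of $\tilde S_{p,k}$ at $(m_*,y_*)$ and showing the $(1-m_*^2)$-powers telescope---is entirely absent and cannot be replaced by the qualitative heuristics you propose.
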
	
Theorem \ref{Prop_lamda} confirms the existence of the trivialization phase for the $(p,k)$ spiked tensor model.  It is believed that as $\lambda\to \infty$, the deterministic potential becomes stronger and the landscape should approach a convex potential with a unique minimum located exactly at the signal vector $\textbf{v}_0$, see \cite{BBCR18}. Closest to our setting is the recent nice work of Belius-et-al.\cite{belius21} which deals with the mean number of critical points for mixed spherical spin glass models with an external field. 

Theorems \ref{thm1} and \ref{Prop_lamda} will follow from the main technical contribution of this paper, which is the derivation of $\mathcal{O}\left(1\right)$ asymptotics of $\mathbb E \left[\Crt_{N}\left(M,E\right)\right]$ in the large $N$ limit.  Exponential asymptotics of  $\mathbb E\left[\Crt_{N}\left(M,E\right)\right] $ were determined by \cite{BMMN18} in the case $k\neq p$. Define $\tilde{S}_{p,k}: \left(-1,1\right)\times  \left(-\infty,-\sqrt{2}\right)\to \mathbb R$ as
	\begin{align}
\tilde{S}_{p,k}\left(m,y\right):=&
\frac{1}{2}\log\left(\left(1-m^2\right)\left(p-1\right)\right)+\frac{2-p}{2p}y
^2-\frac{\lambda m^k}{p}\sqrt{\frac{2\left(p-1\right)}{p}}y-\frac{\lambda^2m^{2k-2}}{2p^2}\left(p+\left(1-p\right)m^2\right)\nonumber\\
&-I_1\left(-y\right),
\end{align}
where 
\begin{align}
	y=y\left(x,m\right):=\frac{px-\left(1-p/k\right)\lambda m^k}{\sqrt{2p\left(p-1\right)}},\label{y}
\end{align}
and
\[I_1\left(z\right)=\int_{\sqrt{2}}^{z}\sqrt{t^2-2}dt \text{  for } z\geq \sqrt{2}, \quad I_1\left(z\right) = \infty \text{ for }z < \sqrt{2} .\]

The next two results do not require any assumptions on $\lambda$.
\begin{theorem}\label{THM1}
	Let $M$ be an open interval of $\left(-1,1\right)$ such that $\bar{M}\subset \left(-1,1\right)$ and $E$ be a bounded open interval on $\mathbb{R}$ such that  $\sup E<-2\sqrt{\frac{p-1}{p}}-\left|\frac{1}{p}-\frac{1}{k}\right|\lambda$,
	then as $N\to \infty$,
	\begin{align}
	\mathbb{E}\left[Crt_{N,0}\left(M,E\right)\right]&=\frac{\sqrt{2}h\left(y_o\right)\left(\sqrt{p}\left(1-m_o^2\right)^{-\frac{3}{2}}-\lambda\left(k-1\right)m_o^{k-2}J\left(m_o,y_o\right)\right)}{\left(\sqrt{y_o^2-2}-y_o\right)p\sqrt{\left|\partial_{yy}\tilde{S}_{p,k}\left(m_o,y_o\right)g^{\prime\prime}\left(m_o\right)\right|}}e^{N\tilde{S}_{p,k}\left(m_o,y_o\right)}\left(1+o\left(N\right)\right),\label{eq:sharp}
	\end{align}
	where
	\begin{align*}
	h\left(y\right)=\left|\frac{y-\sqrt{2}}{y+\sqrt{2}}\right|^{\frac{1}{4}}+\left|\frac{y+\sqrt{2}}{y-\sqrt{2}}\right|^{\frac{1}{4}},
	\end{align*}
	\begin{align*}
	J\left(m,y\right)=\exp\left(-\left(\frac{\lambda^2}{2p^2}m^{2k-2}\left(p\left(1-m^2\right)+m^2\right)+\frac{\lambda m^ky}{2p}\sqrt{\frac{2\left(p-1\right)}{p}}\right)\right),
	\end{align*}
	\begin{align*}
	\tilde{E}_m:=\sqrt{\frac{p}{2\left(p-1\right)}}\left(E-\lambda m^k\left(\frac{1}{p}-\frac{1}{k}\right)\right),\forall m\in M,
\end{align*}
\begin{align*}
	y_o:=y_o\left(m_o\right), y_o\left(m\right)={\arg \max}_{y\in \bar{\tilde{E}}_m} \tilde{S}_{p,k}\left(m,y\right),
\end{align*}
and
\begin{align*}
	 m_o:={\arg\max}_{m\in \bar{M}}g\left(m\right),g\left(m\right)=\tilde{S}_{p,k}\left(m,y_o\left(m\right)\right).
\end{align*}
\end{theorem}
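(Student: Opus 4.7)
The plan is to apply the Kac--Rice formula to express $\E[\Crt_{N,0}(M,E)]$ as a one-dimensional integral over the overlap $m$, extract a sharp (not merely exponential) asymptotic for the conditional expectation of $|\det \nabla^2 H_N|$ via a below-the-edge large deviation for a shifted GOE matrix, and finally carry out a two-variable Laplace integration. Starting from
\[ \E[\Crt_{N,0}(M,E)] = \int_{S^{N-1}(\sqrt N)} \E\bigl[|\det \nabla^2 H_N(\sigma)|\,\indi_{\{i=0\}}\,\indi_{\{H_N(\sigma)/N\in E\}}\,\big|\,\nabla H_N(\sigma)=0\bigr]\, p_{\nabla H_N(\sigma)}(0)\, d\sigma, \]
the invariance of the law of $J$ under rotations around $\textbf{v}_0$ collapses the surface integral to a 1D integral on $M$ with parallel-volume factor proportional to $N(1-m^2)^{(N-3)/2}$. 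Conditionally on $\nabla H_N(\sigma)=0$ and $H_N(\sigma)/N=x$, the spherical Hessian on $T_\sigma S^{N-1}$ decomposes as a scaled $(N-1)$-dimensional GOE matrix, plus a deterministic multiple of the identity depending on $(m,x,\lambda)$, plus a rank-one correction coming from the deterministic signal term.

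The next step is a sharp asymptotic for the conditional determinant. Under the change of variable \eqref{y}, the hypothesis $\sup E<-2\sqrt{(p-1)/p}-|1/p-1/k|\lambda$ translates to $y<-\sqrt 2$, so the shifted GOE spectrum sits strictly to the left of $0$. Hence $\indi_{\{i=0\}}=1$ up to events of exponentially small probability, and the analysis reduces to the below-edge large-deviation regime. The key input is the sharp expansion
\[ \E[|\det(M_N+yI)|]\;=\;\frac{\sqrt 2\,h(y)}{\sqrt{y^2-2}-y}\,e^{(N-1)I_1(-y)+O(1)},\qquad y<-\sqrt 2, \]
for a standard GOE matrix $M_N$. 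Combining this with the Gaussian density of $\nabla H_N(\sigma)$ at $0$ and the conditional Gaussian density of $H_N(\sigma)/N$ at $x$, then changing variable $x\mapsto y$, the Kac--Rice integrand takes the form $(1-m^2)^{(N-3)/2}Q(m,y)\exp\bigl(N\tilde S_{p,k}(m,y)\bigr)$ for an explicit $O(1)$ function $Q$. The two summands $\sqrt p\,(1-m_o^2)^{-3/2}-\lambda(k-1)m_o^{k-2}J(m_o,y_o)$ in the numerator of \eqref{eq:sharp} correspond, respectively, to the deterministic-shift part of the Hessian (through the derivative with respect to $x$ of the shift) and to the rank-one signal perturbation (through the first-order variation of $\log|\det|$ under the rank-one modification).

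Absorbing the surface-area factor $(1-m^2)^{(N-3)/2}$ into the exponent (via the $\tfrac12\log((1-m^2)(p-1))$ term in $\tilde S_{p,k}$), one is left with an integral of the shape $\int\int Q(m,y)\,e^{N\tilde S_{p,k}(m,y)}\,dy\,dm$. Applying Laplace's method in $y$ at fixed $m$: $\tilde S_{p,k}(m,\cdot)$ is strictly concave on $\bar{\tilde{E}}_m$ (it is a quadratic in $y$ minus the convex function $I_1(-y)$) with unique interior maximizer $y_o(m)$, producing the factor $\sqrt{2\pi/(N|\partial_{yy}\tilde S_{p,k}(m,y_o)|)}$. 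Then applying Laplace to $g(m)=\tilde S_{p,k}(m,y_o(m))$, whose unique interior maximizer on $\bar M$ is $m_o$ with $g''(m_o)<0$, produces a second factor $\sqrt{2\pi/(N|g''(m_o)|)}$. The two $1/\sqrt N$ factors together with the single power of $N$ from the surface-area prefactor and the Stirling-based constants from the parallel volume cancel exactly into an $N$-independent constant, and collecting all the explicit $O(1)$ pieces of $Q$ at $(m_o,y_o)$ yields precisely the prefactor of \eqref{eq:sharp}.

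The main obstacle is the sharp below-edge asymptotic for $\E[|\det(M_N+yI)|]$, and in particular pinning down the exact $y$-dependent prefactor $\sqrt 2\,h(y)/(\sqrt{y^2-2}-y)$ rather than merely its logarithmic rate. This refines the log-scale estimates of \cite{BMMN18} and requires precise Selberg/contour-integral asymptotics for $\int \prod_j|y-\lambda_j|\,d\mu_{\goe}^{(N-1)}(\lambda)$ when $y$ lies strictly below the semicircle's support, together with the careful handling of the rank-one spike correction via a first-order perturbative expansion of $\log|\det|$. A secondary technical point is the verification that $(m_o,y_o(m_o))$ lies in the interior of $\bar M\times\bar{\tilde{E}}_m$ and is a strict (non-degenerate) maximum of $\tilde S_{p,k}$, which is what justifies the standard Gaussian Laplace expansion with no boundary contribution.
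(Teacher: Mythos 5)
Your strategy (Kac--Rice, GOE-plus-rank-one Hessian decomposition, change of variable $x\mapsto y$, two-stage Laplace in $y$ then in $m$) tracks the paper's overall structure, but the key technical step you identify as the ``main obstacle'' is not the route the paper takes, and the way you propose to fill it leaves a gap that matters at the $O(1)$ scale being claimed.

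The paper does not compute $\E\bigl[|\det(W_{N-1}-\theta e e^T-yI)|\bigr]$ at all. Instead it first proves (Proposition~\ref{prop_dom}) that $\E[\Crt_{N,\ell}(M,E)]$ for $\ell\ge 1$ is exponentially smaller than $\E[\Crt_{N,0}(M,E)]$ for the energy window in question, which gives $\E[\Crt_{N,0}]\sim\E[\Crt_{N}]\sim\E[\phi(\mathcal S_N)]$ where $\phi$ is the Euler characteristic. The mean Euler characteristic formula replaces $|\det|\,\indi_{\{i=0\}}$ by the \emph{signed} $\det$ with no indicator, and for the shifted-and-spiked GOE there is an \emph{exact} Hermite-polynomial identity for $\E[\det(\cdot)]$ (Lemmas~\ref{lem_det hermite}, \ref{lem char}, \ref{lem GOE shift+rank1}); the rank-one spike is handled by an exact two-term identity, not a perturbative expansion. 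All that is then needed is the uniform Plancherel--Rotach asymptotic (Theorem~\ref{THM PR}, Lemma~\ref{lem_PR}) for Hermite functions below the bulk, which is a classical and comparatively soft input, and it delivers the exact $y$-dependent prefactor automatically, including the term $J(m,y)$ from the spike.

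By contrast, your route requires a genuinely harder technical input that you do not supply: a below-edge large deviation for $\E[|\det(M_N+yI)|]$ with the exact subexponential prefactor pinned down (not only ``$e^{O(1)}$'' as you write), plus a separate argument that the replacement of $|\det|\,\indi_{\{i=0\}}$ by $|\det|$ only costs an exponentially small multiplicative error \emph{even though} the determinant can be exponentially large on the complementary event. The first of these is essentially the content one is trying to prove, and your formula for it appears to have the exponent misidentified (it should involve $\int\log|y-t|\,d\mu_{\scrm}(t)$, equivalently a combination of $y^2$ and $I_1(-y)$, rather than $I_1(-y)$ alone). The second is handled in the paper by Proposition~\ref{prop_dom}, whose proof uses a concentration bound for the empirical log-potential plus the LDP for extreme eigenvalues of a rank-one perturbed GOE to compare the index-$\ell$ and index-$0$ contributions; you would need a comparable argument to make the ``drop $\indi_{\{i=0\}}$'' step rigorous. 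Finally, your proposal to treat the rank-one spike via a ``first-order perturbative expansion of $\log|\det|$'' is not obviously adequate at $O(1)$ precision; the paper's exact two-Hermite-polynomial identity is what makes the constant $J(m_o,y_o)$ come out correctly. So the Laplace framework and final form are right, but the central determinant asymptotic and the index-removal step are where your argument is incomplete, and the paper circumvents both by passing to the Euler characteristic and exact Hermite identities.
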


Theorem \ref{THM1} naturally leads to the following corollary.
\begin{corollary}\label{Col1}
	Let $M$ and $E$ be the same as in Theorem \ref{THM1}, then
	\begin{align}
	\lim_{N\to \infty}\frac{1}{N}\log \mathbb{E}\left[Crt_{N,0}\left(M,E\right)\right]=\sup_{m\in \bar{M}}\sup_{y\in \tilde{E}_m}\tilde{S}_{p,k}\left(m,y\right).
	\end{align}
\end{corollary}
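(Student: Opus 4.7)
The plan is to obtain the corollary as an immediate consequence of the sharp asymptotics established in Theorem \ref{THM1}. Denoting the $N$-independent prefactor in \eqref{eq:sharp} by $A(m_o, y_o)$, Theorem \ref{THM1} rewrites as
\begin{equation*}
\mathbb{E}[\Crt_{N,0}(M,E)] = A(m_o, y_o)\, e^{N \tilde{S}_{p,k}(m_o, y_o)}\bigl(1 + o(1)\bigr).
\end{equation*}
Taking logarithms, dividing by $N$, and letting $N\to\infty$ wipes out the prefactor contribution: since $A(m_o, y_o)$ does not depend on $N$, one has $N^{-1}\log A(m_o, y_o) \to 0$ and $N^{-1}\log(1+o(1)) \to 0$, so only the exponential rate $\tilde{S}_{p,k}(m_o, y_o)$ survives.

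To conclude, I would identify $\tilde{S}_{p,k}(m_o, y_o)$ with the double supremum appearing in the statement. By the very definitions given in Theorem \ref{THM1}, namely $y_o(m) = \arg\max_{y\in\bar{\tilde E}_m}\tilde{S}_{p,k}(m, y)$ and $m_o = \arg\max_{m\in\bar M} g(m) = \arg\max_{m\in\bar M}\tilde{S}_{p,k}(m, y_o(m))$, one has
\begin{equation*}
\tilde{S}_{p,k}(m_o, y_o(m_o)) = \sup_{m\in\bar{M}}\sup_{y\in\bar{\tilde E}_m}\tilde{S}_{p,k}(m, y).
\end{equation*}
It then remains to check that passing from the closed envelopes $\bar M, \bar{\tilde E}_m$ to the (open) sets $M, \tilde E_m$ appearing in the corollary does not change the value of the supremum. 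This is a simple density argument: $\tilde{S}_{p,k}$ is continuous on $(-1,1)\times(-\infty,-\sqrt{2})$, and the hypotheses $\bar M\subset(-1,1)$ together with $\sup E<-2\sqrt{(p-1)/p}-|1/p-1/k|\lambda$ (which forces $y<-\sqrt{2}$ on the relevant range) place us strictly inside this domain of continuity, so boundary values are attained as limits of interior values and the open and closed suprema coincide.

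The key technical work is entirely contained in Theorem \ref{THM1}; this corollary is a logarithmic extraction. The only thing to check for the extraction to be valid is that $A(m_o,y_o)$ is strictly positive and finite, so that $\log A(m_o,y_o)$ is well defined. This is transparent from \eqref{eq:sharp}: $y_o<-\sqrt{2}$ makes $\sqrt{y_o^2-2}-y_o$ positive, $h(y_o)>0$ and $J(m_o,y_o)>0$ by inspection, and the nondegeneracy $\partial_{yy}\tilde{S}_{p,k}(m_o,y_o)\cdot g''(m_o)\neq 0$ at the strict maximizers ensures the square-root denominator is finite and nonzero. In the (boundary) degenerate case where the maximizer is not unique or the Hessian degenerates, one would instead invoke upper and lower bounds of the form $\mathbb{E}[\Crt_{N,0}(M,E)] = e^{N\tilde{S}_{p,k}(m_o,y_o) + o(N)}$, which still suffice for the log-scale statement; this is the only mild subtlety and is absorbed by the existing Laplace analysis behind Theorem \ref{THM1}.
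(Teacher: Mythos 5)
Your proposal is correct and matches the route the paper (implicitly) takes: the paper gives no explicit proof of Corollary~\ref{Col1} but describes it as following ``naturally'' from Theorem~\ref{THM1}, and your logarithmic extraction -- isolating the exponential rate, noting the $N$-independent prefactor is positive and finite, identifying $\tilde S_{p,k}(m_o,y_o)$ with the double supremum via the definitions of $m_o$ and $y_o$, and reconciling the open versus closed index sets by continuity of $\tilde S_{p,k}$ on the relevant domain -- is exactly what is meant. Your aside on the degenerate case is a reasonable precaution but is not needed under the hypotheses of Theorem~\ref{THM1} as stated.
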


\begin{remark}
The function $S_{p,k}\left(m,x\right):= \tilde S_{p,k}(m,y(x,m))$ describes the exponential behavior of $\mathbb{E}\left[\Crt_{N}\left(M,E\right)\right]$ with respect to the dimension $N$ and it is  called the annealed complexity, a function of $m\in \left[-1,1\right]$ and $x\in \mathbb{R}$ such that for any Borel sets $M\subset \left[-1,1\right]$ and $E\subset \mathbb{R}$, 
\begin{align}
	\sup_{m\in M^\mathrm{o},x\in E^\mathrm{o}}S_{p,k,0}\left(m,x\right)&\leq \liminf_{N\to \infty}\frac{1}{N}\log\mathbb{E}\left[\Crt_{N,0}\left(M,E\right)\right]\\\nonumber&\leq \limsup_{N\to \infty}\frac{1}{N}\mathbb{E}\left[\Crt_{N,0}\left(M,E\right)\right]\leq \sup_{m\in \bar{M},x\in \bar{E}}S_{p,k,0}\left(m,x\right).
\end{align}
\end{remark}

\begin{remark} It was discovered in the paper of Ros et al. \cite{BBCR18} that near the signal $v_{0}$ and as $\lambda$ grows, the annealed complexity changes from positive to zero. This transition, named topology trivialization, has been observed and studied in various models of statistical physics and high dimensional optimization, the reader is invited to look at the works of Fyodorov \cite{Fyodorov13, Fyodorov16}, Fyodorov, Le Doussal\cite{Fyodorov14} and Belius et al.\cite{belius21}. The threshold in $\lambda$ in which such transition occurs  is referred to as the trivialization threshold. 
\end{remark}

The $\mathcal{O}\left(1\right)$ asymptotics of Theorem \ref{THM1} also allow us to study the ground state energy in the trivialization region. Our main result in this direction is the following.

\begin{theorem}\label{THM_GSE} For any integers $p\geq 3$ and $k\geq 1$, there exists $c>0$ such that for $\lambda>c$ the following hold:	
\begin{equation}\label{eq-gse22}
	\lim_{N\to \infty} \frac{1}{N}m_N =m_*\left(\lambda\right)  \quad \text{almost surely},
	\end{equation}
	and 
	\begin{equation}\label{eq-gse}
\lim_{N\to \infty} \frac{1}{N}L_N= -\frac{\lambda m_*^k\left(\lambda\right)}{k}-\sqrt{p\left(1-m_*^2\left(\lambda\right)\right)} \quad \text{almost surely}.
	\end{equation}
\end{theorem}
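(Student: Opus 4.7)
The plan is to leverage the structural fact that the global minimizer $\sigma^* := \arg\min_{\sigma \in S^{N-1}(\sqrt N)} H_N(\sigma)$ is always a critical point of $H_N$ of index $0$, so $(m(\sigma^*), L_N/N)$ always lies in the support of the random counting measure $\Crt_{N,0}$. Theorem \ref{thm1}, together with the sharp asymptotic \eqref{eq:sharp} of Theorem \ref{THM1}, will then force this random point into any prescribed neighborhood of $(m_*(\lambda), x_*(\lambda))$ almost surely.

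First, I would establish a deterministic a priori bound $L_N/N \in [-K, K]$ almost surely for all large $N$, with $K = K(\lambda, p, k)$ independent of $N$: since $|H_N(\sigma) - H_N^{(0)}(\sigma)| \leq \lambda N/k$ uniformly on the sphere, this reduces to the known almost sure boundedness of the pure $p$-spin ground state $L_N^{(0)}/N$. Then, fixing $\delta > 0$, I would set
\[
A_\delta := (m_* - \delta,\, m_* + \delta)\times (x_*-\delta,\, x_*+\delta),
\]
pick $\eta \in (0,\, 1-m_*(\lambda))$, and cover the bulk region $\bigl([-1+\eta,1-\eta]\times[-K,K]\bigr)\setminus A_\delta$ by finitely many open rectangles $M_i \times E_i$ satisfying the hypotheses of Theorem \ref{thm1} (namely $\overline{M_i}\subset (-1,1)$ strictly, $E_i$ bounded, and $\sup E_i < -2\sqrt{(p-1)/p} - |1/p - 1/k|\lambda$, which is compatible with $E_i \subset [-K, x_*+\delta]$ for $\lambda$ large enough since $x_*(\lambda) \sim -\lambda/k$ eventually dominates the threshold) and with $(m_*, x_*)\notin \overline{M_i}\times \overline{E_i}$. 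Formula \eqref{eq:sharp} then yields $\mathbb{E}[\Crt_{N,0}(M_i, E_i)] = O(e^{-c_i N})$ for some $c_i > 0$, because the maximizer $(m_o, y_o)$ of $\tilde S_{p,k}$ on each closed rectangle lies strictly in the region where $\tilde S_{p,k} < 0$ (the unique zero sitting at the excluded point $(m_*, y_*)$). A union bound with Markov's inequality followed by Borel-Cantelli forces $\Crt_{N,0}(M_i, E_i) = 0$ for all large $N$ almost surely, so $\sigma^*$ must lie in $A_\delta \cup \{|m(\sigma)|>1-\eta\}$ eventually almost surely.

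The main obstacle is the cap $\{|m(\sigma)|>1-\eta\}$, on which Theorem \ref{thm1} cannot be applied directly because $[1-\eta,1]$ touches the boundary of $(-1,1)$. I would dispatch it by a local convexity argument: in geodesic polar coordinates about $\pm \textbf{v}_0$, the Hessian of the deterministic signal term exceeds $c\lambda\cdot \mathrm{Id}$ on the tangent space for small geodesic radius, while the random Hessian $\nabla^2 H_N^{(0)}$ has operator norm $O(1)$ almost surely by standard GOE-type spectral concentration for pure $p$-spin Hessians. Hence for $\lambda$ large, $H_N$ is strictly convex on each cap and admits a unique critical point whose overlap and energy are pinned, via first-order stationarity, to $(\pm m_*(\lambda), x_*(\lambda))$ up to $o(1)$ (the $\pm$ ambiguity, relevant only when $p$ and $k$ are both even, is immaterial for the energy and harmless for the overlap by the convention $m_*>0$). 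Combining the bulk and cap pieces, $(m(\sigma^*), L_N/N) \in A_\delta$ almost surely for all large $N$; sending $\delta \downarrow 0$ along a countable sequence yields the almost sure limits \eqref{eq-gse22} and \eqref{eq-gse}.
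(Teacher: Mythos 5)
Your proposal takes a genuinely different route from the paper. The paper proves the energy limit by matching an upper bound (obtained from the Chen–Sen ground-state formula for the mixed $p$-spin model induced on each fixed-latitude slice, Proposition \ref{Prop_UB}) against a lower bound (Markov plus Borel–Cantelli applied to $\mathbb{E}[\Crt_N(M,E)]$ with $E=(-\infty,x_*-\epsilon)$, Proposition \ref{Prop_LB}, using the complexity analysis of Proposition \ref{prop.low-latitude-complexity}), and then invokes Gaussian concentration. You instead argue that the global minimizer is always an index-$0$ critical point and use the sharp complexity asymptotics directly to show that, away from a neighbourhood of $(m_*,x_*)$, critical points vanish almost surely. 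Your strategy is morally cleaner and has the virtue of handling the overlap limit \eqref{eq-gse22} and the energy limit \eqref{eq-gse} in one stroke, whereas the paper's written proof really only addresses \eqref{eq-gse}. The bulk covering step (finitely many rectangles on which $\sup\tilde S_{p,k}<0$, followed by Markov and Borel–Cantelli) is sound given the complexity facts established in Theorem \ref{thm1}, Proposition \ref{prop.low-latitude-complexity}, and equation \eqref{high-l-max}.

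However, your cap argument contains a genuine gap, and in fact an internal contradiction. You choose $\eta\in(0,1-m_*(\lambda))$, so the cap $\{m>1-\eta\}$ excludes $m_*$. Strict convexity of $H_N$ on this cap (which does hold for $\lambda$ large and $\eta$ small, since the deterministic Hessian has smallest eigenvalue $\lambda m^{k-2}(km^2-(k-1))>0$ for $m$ near $1$ while the random Hessian is uniformly $O(1)$) gives at most one constrained minimizer, but that minimizer need not be a critical point of $H_N$: it may sit on the boundary circle $\{m=1-\eta\}$, in which case the cap interior is free of critical points. You assert instead that convexity yields a unique critical point and that it is ``pinned to $(\pm m_*(\lambda),x_*(\lambda))$'', which is impossible since any critical point in the cap has overlap strictly greater than $1-\eta>m_*$. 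What you actually need to show is the former: that the constrained minimizer lies on the boundary, so the cap interior contains no critical point at all. This is plausible (a comparison of the energy at the pole, $\approx -\lambda N/k$, with the restricted minimum at latitude $1-\eta$, $\approx N(-\tfrac{\lambda(1-\eta)^k}{k}-\sqrt{p(2\eta-\eta^2)})$, shows the latter is smaller for $\eta<2p/\lambda^2$, which is compatible with $\eta<1-m_*\sim p/(2\lambda^2)$), but this must be argued with a uniform control of the random part, and your proposal does not do so. Alternatively, one can avoid the cap entirely by noting that the complexity $\tilde S_{p,k}(m,y)$ contains the term $\tfrac12\log((1-m^2)(p-1))$ which diverges to $-\infty$ as $m\to 1$, so the expected number of critical points in a shrinking collar around the pole vanishes; this is closer in spirit to how the paper silently dispatches the issue when it sets $M=[0,1]$ in the proof of Proposition \ref{Prop_LB}.
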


The above theorem was first conjectured and proposed in the article of Ros et al. \cite{BBCR18}, where the authors studied the number of local minima of $H_N$ via a replica theoretic approach. The above formulas are not expected to be true when $\lambda$ is small (see \cite{Gillin00} and \cite{BBCR18} and Remark \ref{rem:TT}) for any choices of $(p,k)$. In the case $p=k$, the $(p,k)$ spiked tensor model has a log-likelihood interpretation as tensor PCA. This interpretation  was used by Jagannath-Lopatto-Miolane to derive asymptotic formulas for the ground state energy for all values of $\lambda$. Theorem \ref{THM_GSE} above is an extension of Theorem 1.2 in \cite{JLM18} for $p\neq q$ and $\lambda$ sufficiently large, although the method of the proof is different.

\begin{remark} \label{rem:TT}(Trivialization threshold). Recall $m_{\lambda}$ from \eqref{eq:lowhighlatitude}. Let 
\[\lambda^{\left(2\right)}\left(p,k\right)=\inf \bigg\{\lambda\geq \lambda^{\left(1\right)}\left(p,k\right):m_*\left(\lambda\right)\geq m_{\lambda} \bigg\}\]
and
\begin{align}\label{eq:tri-threshold}
		\lambda_{tr}=\inf \bigg\{&\lambda\geq \lambda^{\left(2\right)}\left(p,k\right):\sup_{0\leq m\leq m_{\lambda}}S_{p,k} \left(m,x_*\left(\lambda\right)\right)\leq 0 
	\text{ and } \\
		 &S_{p,k}\left(m,x_*\left(\lambda\right)\right)1_{m\in \left[0,m_\lambda\right]} \text{ is a decreasing function of } \lambda  \text{ on } [\lambda^{\left(2\right)}\left(p,k\right),\infty)\label{eq:tri-threshold2}
\bigg\}.
\end{align} 
Our proof of Theorem \ref{THM_GSE} shows that \eqref{eq-gse22} and \eqref{eq-gse} hold for all $\lambda > \lambda_{tr}$. We expect that condition \eqref{eq:tri-threshold} implies \eqref{eq:tri-threshold2} and that \eqref{eq-gse22} and \eqref{eq-gse} fail for $\lambda<\lambda_{tr}$. Figure \ref{fig:monotone} below shows a plot of the annealed complexity for various values of $\lambda$.

For the spiked tensor model $\left(p=k>2\right)$, it has been shown in \cite{BMMN18} and \cite{JLM18} that $\lambda^{\left(1\right)}=\lambda^{\left(2\right)}<\lambda_{tr}$. For the general case, we show in Lemma \ref{lem.compare} that $\lambda^{\left(1\right)}=\lambda^{\left(2\right)}$ if and only if $p\leq k$. However, $\lambda^{\left(2\right)}$ and $\lambda_{tr}$ can only be compared numerically. More details on the existence and values of $\lambda^{\left(i\right)}\left(p,k\right), i=1,2$ and $\lambda_{tr}$ can be found in Lemma \ref{lem.UniqueSol}, \ref{lem.highlatitude}, Proposition \ref{prop.low-latitude-complexity} and Figure \ref{fig:monotone}.

\end{remark}

\begin{figure}[h]
	\centering
	\begin{subfigure}[b]{0.49\textwidth}
		\includegraphics[width=\textwidth]{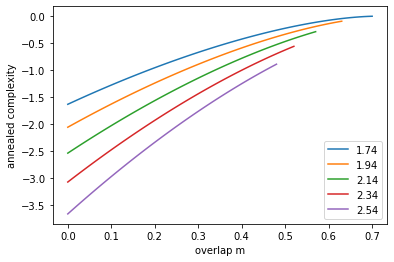}
		\caption{p=3,k=1}
		\label{fig:gull}
	\end{subfigure}
	~ 
	\begin{subfigure}[b]{0.49\textwidth}
		\includegraphics[width=\textwidth]{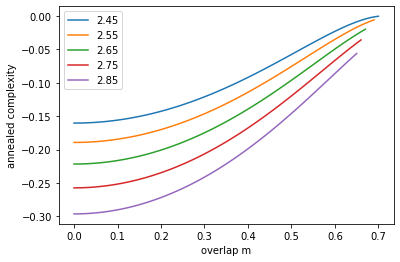}
		\caption{p=3,k=2}
		\label{fig:tiger}
	\end{subfigure}

	~ 
	\begin{subfigure}[b]{0.49\textwidth}
		\includegraphics[width=\textwidth]{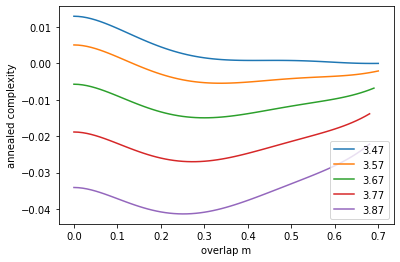}
		\caption{p=3,k=3}
		\label{fig:mouse}
	\end{subfigure}
	\begin{subfigure}[b]{0.49\textwidth}
		\includegraphics[width=\textwidth]{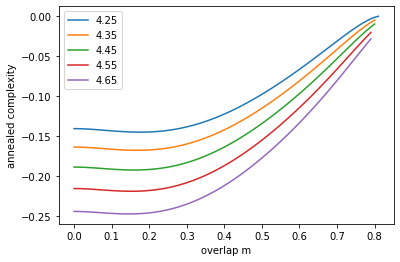}
		\caption{p=4,k=3}
		\label{fig:}
	\end{subfigure}
\caption{$S_{p,k}\left(m,x_*\left(\lambda\right)\right)1_{m\in \left[0,m_\lambda\right]}$ with different values of $p$, $k$ and $\lambda$. The numbers in the legends are values of $\lambda$. In each of the subfigures and for each $m\in \left[0,1\right]$, the values of $S_{p,k}\left(m,x_*\left(\lambda\right)\right)1_{m\in \left[0,m_\lambda\right]}$ decreases as $\lambda$ increases. For $p=3,k=1$, $\lambda^{\left(1\right)}=0, \lambda^{\left(2\right)}=\lambda_{tr}=1.732$.  For $p=3,k=2$, $\lambda^{\left(1\right)}=0, \lambda^{\left(2\right)}=\lambda_{tr}=2.449$.  For $p=3,k=3$, $\lambda^{\left(1\right)}=\lambda^{\left(2\right)}=3.464, \lambda_{tr}=3.619$.  For $p=4,k=3$, $\lambda^{\left(1\right)}=4, \lambda^{\left(2\right)}=\lambda_{tr}=4.243$.}\label{fig:monotone}
\end{figure}


We finish this introduction mentioning a few related results and a brief description of the rest of the paper.  First, the study of models such as the $(p,k)$ spiked tensor along the direction of high dimensional statistical inference was initiated by Montanari-Richard \cite{Montanari:2014}. For the readers who are particularly interested in Tensor PCA and spiked matrix-tensor model, we refer the reader to papers \cite{Montanari:2014, JLM18, BBCR18, Mannellil19a, Mannellil19b,Mannellil20} and the references therein. A prototypical inference model called spiked matrix-tensor model which is closely related to the case of $k=2$ and $p\geq 3$ was extensively studied in \cite{Mannellil19a,Mannellil19b,Mannellil20}. In a recent paper by Maillard-Ben Arous-Biroli \cite{maillard20a}, the complexity study (using the replicated Kac-Rice approach) is extended to current machine learning models like random generalized linear models and neural networks.

In Section \ref{sec.asymptotic} we prove Theorem \ref{THM1}.We first show that the deep minima dominate the total number of critical points in Proposition \ref{prop_dom}. This result allows us to transform the problem of computing the mean number of deep minima into a problem of computing the mean Euler characteristic of  level set for which we could use tools from random matrix theory to compute the characteristic polynomial of a deformed Gaussian Orthogonal Ensemble (GOE).

 In Section \ref{sec.dm} we prove Theorems \ref{thm1} and \ref{Prop_lamda} where we study the mean number of deep minima (minima near the bottom of the energy landscape) and its asymptotic as $\lambda\to \infty$. 

In Section \ref{sec.GSE} we analyse the ground state energy and prove Theorem \ref{THM_GSE}. We first provide in Proposition \ref{Prop_UB} an upper bound of the ground state energy by restricting to energies with fixed latitude $m$, a method that was used \cite{JLM18} in the case of $k=p$. A matching lower bound is given in Proposition \ref{Prop_LB} by exploring the supremum of the annealed complexity near the bottom of the energy landscape.

\color{black}

\section{Proof of Theorem \ref{THM1}}\label{sec.asymptotic}
The (normalized) GOE of size $N$ (denoted by $W_N$) is a real symmetric random matrix $\left(W_{ij}\right)_{1\leq i,j\leq N}$ such that $\left\{W_{ij}\right\}_{1\leq i\leq j\leq N}$ are independent zero mean  normal random variables with $\mathbb{E}\left[W^2_{ij}\right]=\frac{1}{N}$ and $\mathbb{E}\left[W^2_{ii}\right]=\frac{2}{N}$. 

We will work with the rescaled Hamiltonian $f$ on the unit sphere $S^{N-1}$:
\begin{align}
f\left(\sigma\right):=\frac{H_N\left(\sqrt{N}\sigma\right)}{\sqrt{N}}=-\sum_{1\leq i_1,i_2,\dots,i_p\leq N}J_{i_1,i_2,\dots,i_p}\sigma_{i_1}\sigma_{i_2}\cdots\sigma_{i_p}-\frac{\lambda \sqrt{N}}{k}\left<\sigma,\hat{\textbf{v}}_0\right>^k\label{res H}
\end{align} 
where $\hat{\textbf{v}}_0:=\textbf{v}_0\in S^{N-1}$. Then
	\[\Crt_{N}\left(M,E\right)=\sum_{\sigma\in S^{N-1},\nabla f\left(\sigma\right)=0}\indi_{\left\{\sigma\cdot\hat{\textbf{v}}_0\in M\right\}}\cdot \indi_{\left\{f\left(\sigma\right)/\sqrt{N}\in E\right\}}\]
and the corresponding number of critical points of index $l = 0, \ldots, N-1$ by
\[\Crt_{N,l}\left(M,E\right)=\sum_{\sigma\in S^{N-1},\nabla f\left(\sigma\right)=0}\indi_{\left\{\sigma\cdot\hat{\textbf{v}}_0\in M\right\}}\cdot \indi_{\left\{f\left(\sigma\right)/\sqrt{N}\in E\right\}}\indi_{\left\{i\left(\nabla^2 f\right)=l\right\}}.\]

\begin{proposition}\label{prop_dom}
	Let $M$ and $E$ be the same as in Theorem \ref{thm1}, then for any $l\geq 1$, 
	\begin{equation}
	\limsup_{N\to \infty}\frac{1}{N}\log\mathbb{E}\left[Crt_{N,l}\left(M,E\right)\right]<\sup_{m\in \bar{M},x\in \bar{E}}S_{p,k}\left(x,m\right).\label{eq.dom}
	\end{equation}
\end{proposition}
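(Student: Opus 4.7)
The plan is to apply the Kac--Rice formula to $\mathbb{E}[\Crt_{N,l}(M,E)]$ and compare the resulting exponential rate to the rate $\sup_{\bar M\times\bar E}S_{p,k}$ that governs the total (or equivalently, the index-$0$) count. Parametrizing the sphere by the overlap $m=\sigma\cdot\hat{\mathbf{v}}_0$ and exploiting rotational invariance of the tensor part around $\hat{\mathbf{v}}_0$, one writes
\begin{equation*}
\mathbb{E}[\Crt_{N,l}(M,E)] = \int_M\int_E \mathcal{K}_N(x,m)\,\mathbb{E}\bigl[|\det \nabla^2 f(\sigma_m)|\,\mathbf{1}_{\{i(\nabla^2 f(\sigma_m))=l\}}\,\big|\,\nabla f(\sigma_m)=0,\,f(\sigma_m)=x\sqrt{N}\bigr]\,dx\,dm,
\end{equation*}
where $\sigma_m$ is any meridian point of overlap $m$ and $\mathcal{K}_N$ collects the volume factor and the joint Gaussian density of $(f,\nabla f)$ at the conditioning values.

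A covariance computation in the spirit of \cite{AB13, BMMN18} then shows that conditional on $\{\nabla f(\sigma_m) = 0,\, f(\sigma_m) = x\sqrt{N}\}$ the spherical Hessian on the $(N-1)$-dimensional tangent space is distributed, up to a positive $(p,m)$-dependent scalar, as $G_{N-1} + zI + P$, where $G_{N-1}$ is a GOE, $z=z(x,m)$ is a deterministic shift, and $P$ is a deterministic rank-one perturbation pointing along the projected signal whose magnitude depends on $\lambda,m,k$. Condition \eqref{cond:E} is equivalent to $y(x,m)<-\sqrt{2}$ uniformly on $\bar M\times\bar E$, i.e.\ $|z(x,m)|>\sqrt{2}$ with the typical semicircle bulk $[z-\sqrt 2,\,z+\sqrt 2]$ lying entirely on one side of zero. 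In this regime the typical index is $0$, and the event $\{i(\nabla^2 f)=l\}$ for $l\geq 1$ is a spectral large-deviation event.

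To quantify the penalty, I would invoke the Ben Arous--Dembo--Guionnet large deviation principle for the smallest eigenvalues of a GOE, combined with a BBP-type analysis for the rank-one perturbation $P$ (which, because the spectral gap $|z|-\sqrt 2$ is bounded below uniformly on $\bar M\times\bar E$, cannot create a spurious outlier crossing zero for our parameter range). This gives an exponential bound $e^{-N\Delta(x,m)}$ with $\Delta(x,m)>0$ continuous in $(x,m)$ for the probability that the perturbed shifted GOE has at least $l$ eigenvalues on the wrong side of zero. Inserting this factor into the Kac--Rice integrand and running the same Laplace expansion that controls the $l=0$ characteristic-polynomial moment yields
\begin{equation*}
\limsup_{N\to\infty}\frac{1}{N}\log\mathbb{E}[\Crt_{N,l}(M,E)] \leq \sup_{(m,x)\in\bar M\times\bar E}\bigl(S_{p,k}(x,m) - l\,\Delta(x,m)\bigr),
\end{equation*}
which, by compactness of $\bar M\times\bar E$ and $\inf_{\bar M\times\bar E}\Delta>0$, is strictly less than $\sup_{\bar M\times\bar E}S_{p,k}$, proving \eqref{eq.dom}.

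The principal obstacle is combining the Coulomb-gas representation of $|\det \nabla^2 f|$ with the index constraint so as to produce a genuinely strict exponential gap rather than a merely quantitative perturbation of the same rate. Concretely, one must rule out the possibility that the rank-one term $P$ creates an outlier eigenvalue near zero that would cheaply realize the index-$l$ event; this is precisely why the threshold in \eqref{cond:E} contains the term $|1/p-1/k|\lambda$, which keeps $z(x,m)$ bounded away from $\pm\sqrt{2}$ uniformly and hence keeps $\Delta$ strictly positive.
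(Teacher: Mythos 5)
Your overall strategy matches the paper's: apply Kac--Rice, isolate the conditional Hessian expectation, observe that under condition \eqref{cond:E} the shift $y(x,m)$ stays in $(-\infty,-\sqrt{2}-\kappa)$ uniformly over $\bar M\times\bar E$, and then use a large deviation principle for the extreme eigenvalues of a rank-one perturbation of GOE to show that the index-$l$ event for $l\geq 1$ incurs a strictly positive exponential penalty. The paper indeed cites exactly such an LDP result directly (rather than assembling it from a GOE LDP plus a separate BBP outlier analysis as you propose, but that is a cosmetic difference).

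There is, however, one step that your write-up glosses over and that does not come for free: the phrase ``inserting this factor into the Kac--Rice integrand.'' The Kac--Rice conditional expectation has the form
\[
\mathbb{E}\bigl[\,|\det(W_{N-1}-\theta e_{N-1}e_{N-1}^T - yI_{N-1})|\;\mathbf{1}\{i(\cdot)=l\}\,\bigr],
\]
and the determinant and the index indicator are \emph{not} independent --- both are functions of the same eigenvalues. You cannot bound this by $\mathbb{E}[|\det|]\cdot\mathbb{P}[i(\cdot)=l]$ without further argument; a bare Cauchy--Schwarz would cost you a factor of $1/2$ in the rate unless you also control $\mathbb{E}[|\det|^2]$. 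The paper resolves this by introducing the event
\[
B_N(\delta)=\Bigl\{\,\Bigl|\textstyle\int\log|x-y|\,dL_N(x)-\int\log|x-y|\,d\mu(x)\Bigr|>\delta\,\Bigr\},
\]
and invoking the $N^2$-speed large deviations/concentration for the empirical log-potential of GOE (cited as \cite{GBA}), so that on the complement of $B_N(\delta)$ the log-determinant is deterministically pinned at its semicircle value up to $N\delta$, while the contribution from $B_N(\delta)$ is superexponentially small. Only then does the conditional expectation factor as (deterministic value of $|\det|$) $\times$ $\mathbb{P}(\lambda_\ell\leq y)$, which the LDP for the rank-one perturbation (\cite[Theorem 2.13]{Alice}) controls by $e^{-N\rho}$. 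You should add this decoupling step; once it is in place, your argument closes and is in all other respects the same as the paper's.
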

	We postpone the proof of this Proposition to the end of this section. We now show how to prove Theorem \ref{THM1}.

\begin{proof}[Proof of Theorem \ref{THM1}]
	Since $f$ is a Morse function almost surely, let \[\mathcal{S}_N\left(M,E\right):=\left\{\sigma\in S^{N-1}: f\left(\sigma\right)\in \sqrt{N}E, \sigma\cdot \hat{\textbf{v}}_0\in M\right\},\] then its Euler characteristic $\phi\left(\mathcal{S}_N\left(M,E\right)\right)$ can be computed in terms of the numbers of critical points as below,
	\[\phi\left(\mathcal{S}_N\left(M,E\right)\right)=\sum_{l=0}^{N-1}\left(-1\right)^{l+2}Crt_{N,l}\left(M,E\right).\]
	Using Proposition \ref{prop_dom}, we have as $N\to \infty$,
	\begin{equation}
	\mathbb{E}\left[Crt_{N,0}\left(M,E\right)\right]\sim \mathbb{E}\left[Crt_{N}\left(M,E\right)\right]\sim \mathbb{E}\left[\phi\left(\mathcal{S}_N\left(M,E\right)\right)\right].
	\end{equation}
	Therefore  it suffices to compute the asymptotic of the mean Euler characteristic $\mathbb{E}\left[\phi\left(\mathcal{S}_N\left(M,E\right)\right)\right]$. Applying formula 12.4.4 in \cite{AT2007} (see also Eq. $\left(\text{6.22}\right)$ in \cite{AJW09}) , we have
	\begin{equation}
	\mathbb{E}\left[\phi\left(\mathcal{S}_N\left(M,E\right)\right)\right]=\int_{\sigma\cdot v_0\in M}\mathbb{E}\left[ \det \nabla^2 f\left(\sigma\right) \cdot 1_{\left\{f\in \sqrt{N}E\right\}}\mid \nabla f=0 \right]\phi_{\nabla f\left(\sigma\right)}\left(0\right)d\sigma, 
	\end{equation}
	where $\phi_{\nabla f\left(\sigma\right)}\left(0\right)$ is the density of $\nabla f\left(\sigma\right)$ at $0$. 
	
	Let $\omega_{N-2}=\frac{\left(N-1\right)\pi^{\frac{N-1}{2}}}{\Gamma\left(\frac{N+1}{2}\right)}$ be the surface area of $N-2$ dimensional unit sphere, using the data in Lemma \ref{lem_1st dist}, we get
	\begin{align*}
	&\int_{\sigma\cdot v_0\in M}\mathbb{E}\left[ \det \nabla^2 f\left(\sigma\right) \cdot 1_{\left\{f\in \sqrt{N}E\right\}}\mid \nabla f=0 \right]\phi_{\nabla f\left(\sigma\right)}\left(0\right)d\sigma\\
	&=\frac{\omega_{N-2}\sqrt{N}}{\left(2\pi\right)^{\frac{N}{2}}p^{\frac{N-1}{2}}}\int_M\int_E\left(1-m^2\right)^{\frac{N-3}{2}}\exp\left({-\frac{N}{2}\left(\lambda^2m^{2k-2}\left(1-m^2\right)/p+\left(x+\lambda m^k/k\right)^2\right)}\right)G_N\left(x,m\right)dxdm,
	\end{align*}
	where
	\begin{align}\label{eq:decomp1}
	&G_N\left(x,m\right)
	=\left(2\left(N-1\right)p\left(p-1\right)\right)^{\frac{N-1}{2}}\mathbb{E}\left[\det\left(W_{N-1}-\frac{\sqrt{N}}{\sqrt{N-1}}\theta e_{N-1}e^T_{N-1}-\frac{\sqrt{N}}{\sqrt{N-1}}yI_{N-1}\right)\right],
	\end{align}
	\begin{align}\label{eq:decomp2}
	\theta=\theta\left(m\right):=\frac{\lambda\left(k-1\right)m^{k-2}\left(1-m^2\right)}{\sqrt{2p\left(p-1\right)}},
	\end{align}
	and
	\begin{align}
	y=y\left(x,m\right):=\sqrt{\frac{p}{2\left(p-1\right)}}\left(x-\left(1/p-1/k\right)\lambda m^k\right).\label{y-x}
	\end{align}

	Using Lemma \ref{lem_det hermite} and \ref{lem GOE shift+rank1}, we can express $G$ using Hermite polynomials (see definition \ref{def_H}),
	\[G_N\left(x,m\right)=\left(-1\right)^{N-1}\left(p\left(p-1\right)/2\right)^{\frac{N-1}{2}}\left(h_{N-1}\left(\sqrt{N}y\right)+2\sqrt{N}\theta h_{N-2}\left(\sqrt{N}y\right)\right).\]
	It follows that $\mathbb{E}\left[Crt_{N}\left(M,E\right)\right]=I+II$, where
	\begin{align*}
	I=&\frac{\omega_{N-2}\sqrt{N}\left(-1\right)^{N-1}\left(p-1\right)^{\frac{N-1}{2}}}{2^{N-\frac{1}{2}}\pi^{\frac{N}{2}}}\int_M dm\left(1-m^2\right)^{\frac{N-3}{2}}\\
	&\int_Edx\exp\left({-\frac{N}{2}\left(\lambda^2 m^{2k-2}\left(1-m^2\right)/p+\left(x+\lambda m^k/k\right)^2\right)}\right)h_{N-1}\left(\sqrt{N}y\right),
	\end{align*}
	and
	\begin{align*}
	II=&\frac{\omega_{N-2}N\left(-1\right)^{N-1}\left(p-1\right)^{\frac{N-1}{2}}}{2^{N-\frac{3}{2}}\pi^{\frac{N}{2}}}\int_{M}dm\left(1-m^2\right)^{\frac{N-3}{2}}\\
	&\int_Edx\exp\left({-\frac{N}{2}\left(\lambda^2 m^{2k-2}\left(1-m^2\right)/p+\left(x+\lambda m^k/k\right)^2\right)}\right)\theta h_{N-2}\left(\sqrt{N}y\right).
	\end{align*}
	We consider term $I$ first.  Using the Hermite function $\phi_{N-1}$ (see Definition \ref{def_H}), 
\begin{align}
&I=\frac{\omega_{N-2}N^{\frac{1}{2}}\left(-1\right)^{N-1}\left(2^{N-1}\left(N-1\right)!\sqrt{\pi}\right)^{\frac{1}{2}}}{2^{N-\frac{1}{2}}\pi^{\frac{N}{2}}\sqrt{p-1}}\int_Mdm\left(1-m^2\right)^{-\frac{3}{2}}\\ &\int_Edx\exp\left({\frac{N}{2}\left(y^2+\log\left(1-m^2\right)\left(p-1\right)-\lambda^2m^{2k-2}\left(1-m^2\right)/p-\left(x+\lambda m^k/k\right)^2\right)}\right)\phi_{N-1}\left(\sqrt{N}y\right).\nonumber
\end{align}
	Since for any $m\in M$ and $x\in E$,  $y=y\left(x,m\right)<-\sqrt{2}$. Using Lemma \ref{lem_PR} and let $\tilde{h}(y)=\frac{\sqrt{2}h(y)}{\sqrt{y^2-2}-y}$, we have as $N\to \infty$,
	\begin{align*}
	&I\sim\frac{\omega_{N-2}N^{\frac{1}{2}}\left(2^{N-1}\left(N-1\right)!\sqrt{\pi}\right)^{\frac{1}{2}}}{\sqrt{4\pi\sqrt{2N}}2^{N-\frac{1}{2}}\pi^{\frac{N}{2}}\sqrt{p-1}}\int_{M}dm\left(1-m^2\right)^{-\frac{3}{2}} \\&\int_Edx\exp\left(\frac{N}{2}\left(y^2+\log\left(1-m^2\right)\left(p-1\right)-\lambda^2m^{2k-2}\left(1-m^2\right)/p-\left(x+\lambda m^k/k\right)^2-2I_1\left(-y\right)\right)\right)\tilde{h}(y)\\
	&\sim\frac{\left(N-1\right)\pi^{\frac{N-1}{2}}N^{\frac{1}{4}}\sqrt{\left(N-1\right)!}}{\Gamma\left(\frac{N+1}{2}\right)2^{\frac{N}{2}+\frac{5}{4}}\pi^{\frac{N}{2}+\frac{1}{4}}\sqrt{p-1}}\int_M\int_E\left(1-m^2\right)^{-\frac{3}{2}}\tilde{h}(y\left(x,m\right))\exp\left({NS_{p,k}\left(x,m\right)}\right)dxdm \\
	&\sim\frac{N}{2\sqrt{2}\pi\sqrt{p-1}}\int_M\int_E\left(1-m^2\right)^{-\frac{3}{2}}\tilde{h}(y\left(x,m\right))\exp\left({NS_{p,k}\left(x,m\right)}\right)dxdm.
	\end{align*}
Substituting $y$ for $x$, we have as $N\to \infty$,
\begin{align}
	I\sim \frac{N}{2\pi\sqrt{p}}\int_{M}\int_{\tilde{E}_m}\left(1-m^2\right)^{-\frac{3}{2}}\tilde{h}\left(y\right)\exp\left(N\tilde{S}_{p,k}\left(m,y\right)\right)dydm, \label{1st term}
\end{align}
where $\tilde{E}_m:=\sqrt{\frac{p}{2\left(p-1\right)}}\left(E-\lambda m^k\left(\frac{1}{p}-\frac{1}{k}\right)\right).$

	Similarly, 
	\begin{align*}
&II=\frac{2\lambda\omega_{N-2}N\sqrt{N-1}\left(-1\right)^{N-1}\left(p-1\right)^{\frac{N-2}{2}}\left(k-1\right)}{2^{N}\pi^{\frac{N}{2}}\sqrt{p}\sqrt{N-1}}\\&\int_M\int_E\left(1-m^2\right)^{\frac{N-1}{2}}m^{k-2}\exp\left(-\frac{N}{2}\left(\frac{\lambda^2}{p}m^{2k-2}\left(1-m^2\right)+\left(x+\frac{\lambda m^k}{k}\right)^2\right)\right)h_{N-2}\left(\sqrt{N}y\right)dxdm\\
&= \frac{\lambda \omega_{N-2}N\sqrt{\left(N-2\right)!}\pi^{\frac{1}{4}}\left(k-1\right)\left(-1\right)^{N-1}}{\sqrt{p\left(2\pi\right)^N}}\int_Mdmm^{k-2}\left(p-1\right)^{\frac{N-2}{2}}\left(1-m^2\right)^{\frac{N-1}{2}}\\
&\int_Edx\exp\left(-\frac{N}{2}\left(\frac{\lambda^2}{p}m^{2k-2}\left(1-m^2\right)+\left(x+\frac{\lambda m^k}{k}\right)^2-y^2\right)\right)\phi_{N-2}\left(\sqrt{N}y\right).
\end{align*}
When $N$ is large enough, $\sqrt{\frac{N}{N-1}}y<-\sqrt{2}$, so by Lemma \ref{lem_PR}, 
\[\phi_{N-2}\left(\sqrt{N}y\right)\sim \left(-1\right)^{N-2}\frac{e^{-\left(N-1\right)I_1\left(-\sqrt{\frac{N}{N-1}}y\right)}}{\sqrt{4\pi\sqrt{2\left(N-1\right)}}}\tilde{h}\left(\sqrt{\frac{N}{N-1}}y\right).\]
Therefore as $N\to \infty$,
\begin{align*}
II\sim-\frac{\lambda N\left(k-1\right)}{2\sqrt{2p}\pi}\int_Mdm\left(p-1\right)^{\frac{N-2}{2}}\left(1-m^2\right)^{\frac{N-1}{2}}m^{k-2}\int_{E}dx\tilde{h}\left(\sqrt{\frac{N}{N-1}}y\left(x\right)\right)L_N\left(m,x\right),
\end{align*}
where 
\begin{align*}
L_N\left(m,x\right)=\exp\left(-\frac{N}{2}\left(\frac{\lambda^2}{p}m^{2k-2}\left(1-m^2\right)+\left(x+\lambda m^k/k\right)^2-y^2+\frac{2\left(N-1\right)}{N}I_1\left(-\sqrt{\frac{N}{N-1}}y\right)\right)\right).
\end{align*}
Let $z=\sqrt{\frac{N}{N-1}}y\left(x\right)$, then
\begin{align*}
&\tilde{L}_N\left(m,z\right):=\left(\left(p-1\right)\left(1-m^2\right)\right)^{\frac{N-1}{2}}L_N\left(m,x\right)=\exp\left(-\left(N-1\right)\tilde{S}_{p,k}\left(m,z\right)\right)J_N\left(m,z\right),
\end{align*}
where
\begin{align*}
J_N\left(m,z\right)=\exp\left(-\frac{\lambda^2}{2p^2}m^{2k-2}\left(p\left(1-m^2\right)+m^2\right)+\frac{\lambda m^kz}{p}\sqrt{\frac{2\left(p-1\right)}{p}}\left(N-1\right)\left(1-\sqrt{\frac{N}{N-1}}\right)\right).
\end{align*}
Substituting $z$ for $x$, we have as $N\to \infty$,
\begin{align*}
II\sim -\frac{\lambda \left(N-1\right)\left(k-1\right)}{2p\pi}\int_Mdm m^{k-2}\int_{\tilde{E}_{m,N}}dz\tilde{h}\left(z\right)\exp\left(\left(N-1\right)\tilde{S}_{p,k}\left(m,z\right)\right)J_N\left(m,z\right).
\end{align*} 
where $\tilde{E}_{m,N}=\sqrt{\frac{N}{N-1}}\tilde{E}_m$.

Since $\tilde{E}_m$ is precompact, $1_{\tilde{E}_{m,N}}\left(z\right)J_N\left(m,z\right)$ converges to $1_{\tilde{E}_m}\left(z\right)J\left(m,z\right)$ uniformly on $m\in M$ and $z\in \tilde{E}_m$. Therefore as $N\to \infty$,
\begin{align}
II&\sim -\frac{\lambda \left(N-1\right)\left(k-1\right)}{2p\pi}\int_Mdm m^{k-2}\int_{\tilde{E}_{m}}dy\tilde{h}\left(y\right)\exp\left(\left(N-1\right)\tilde{S}_{p,k}\left(m,y\right)\right)J\left(m,y\right).
\label{Eq. II}
\end{align}	
Combining Eq. $\left(\ref{1st term}\right)$ and $\left(\ref{Eq. II}\right)$, we get Eq. $\left(\ref{eq:sharp}\right)$ from the Laplace method.
\end{proof}

We end the section with the proof of Proposition \ref{prop_dom}.

\begin{proof}[Proof of Proposition \ref{prop_dom}]
	
	Applying Kac-Rice formula (Theorem 12.1.1 in \cite{AT2007}) to the Hamiltonian $\left(\ref{res H}\right)$, we have
	\begin{equation}
	\mathbb{E}\left[Crt_{N,\ell}\left(M,E\right)\right]=\int_{\sigma\cdot v_0\in M}\mathbb{E}\left[ \left|\det \nabla^2 f\left(\sigma\right) \right|\cdot 1_{\left\{f\in \sqrt{N}E, i \left(\nabla^2 f\right)=l\right\}}\mid \nabla f=0 \right]\phi_{\nabla f\left(\sigma\right)}\left(0\right)d\sigma, 
	\end{equation}
Set 
\[
A_{N,\ell}(\sigma)= \mathbb{E}\left[ \left|\det \nabla^2 f\left(\sigma\right) \right|\cdot 1_{\left\{f\in \sqrt{N}E, i \left(\nabla^2 f\right)=l\right\}}\mid \nabla f=0 \right].
\]
We now show that for any $\sigma$ with $\sigma \cdot v_{0} \in M \subseteq (m_{\lambda},1)$ and $E$ satisfying \eqref{cond:E}, we have for $\ell \geq 1$

\[
\frac{1}{N} \log A_{N,\ell}(\sigma)= o\left(\frac{1}{N} \log A_{N,0}(\sigma)\right)
\]
uniformly in $\sigma$. Looking at \eqref{eq:decomp1}, \eqref{eq:decomp2}, and \eqref{y-x}, and using Lemma \ref{lem_1st dist} it suffices to show 
there exists $\eta>0$, independent of $y \in E$, such that 
\begin{equation}\label{eq:goalofthisbound}
\frac{\mathbb E \left[ |\det (M -\theta e_{N-1}e_{N-1}^{T} - y I_{N})| \mathbf 1 \{\lambda_{\ell} \leq y \} \right] }{\mathbb E \left[ |\det (M -\theta e_{N-1}e_{N-1}^{T} - y I_{N})| \mathbf 1 \{\lambda_{0} \leq y \} \right] }  \leq \exp (-N \eta).
\end{equation}

Let $L_{N}$ be the empirical spectral measure of the matrix $M-\theta e_{N-1}e_{N-1}^{T}$, $\lambda_{\ell}(\theta)$ its $\ell$-th smallest eigenvalue, and $\mu$ denote the semi-circle law. 
For $\delta>0$ consider the event 
\[
B_{N}(\delta)= \left\{ \left| \int \log |x-y| dL_{N}(x) - \int  \log |x-y| d\mu(x)\right| > \delta \right\}.
\]
By \cite{GBA}, and an application of eigenvalue interlacement, there exist $\epsilon>0$ so that for all $N$ sufficiently large
\[
 \mathbb P \left( B_{N}(\delta) \right) \leq e^{-\epsilon N^{2}}.
\]
Now writing 
\[
|\det (M -\theta e_{N-1}e_{N-1}^{T} - y I_{N})| = \int \log |x-y| dL_{N}(x),
\]
note that there exists $C>0$ so that $ \mathbb E\int \log |x-y| dL_{N}(x) \leq \exp (CN)$  and  a positive constant $C'$, such that for $N$ large enough 
\begin{align}\label{estimate1}
\mathbb E \left[ |\det (M -\theta e_{N-1}e_{N-1}^{T} - y I_{N})| \mathbf 1 \{\lambda_{\ell} \leq y \} \right] 
&= \mathbb E \left[ |\det (M -\theta e_{N-1}e_{N-1}^{T} - y I_{N})| \mathbf 1 \{\lambda_{\ell} \leq y \}  \mathbf 1\{ B_{N}(\delta) \}\right] \nonumber \\
&+\mathbb E \left[ |\det (M -\theta e_{N-1}e_{N-1}^{T} - y I_{N})| \mathbf 1 \{\lambda_{\ell} \leq y \}  \mathbf 1\{ B_{N}^{c}(\delta) \}\right] \nonumber \\
&\leq e^{N\left( \delta + \int \log |x-y| d\mu\right)}\mathbb P(\lambda_{\ell} \leq y) + e^{-\epsilon N^{2} +C'N}.
\end{align}
At the same time, we also have the lower bound
\begin{align}\label{estimate2}
\mathbb E \left[ |\det (M -\theta e_{N-1}e_{N-1}^{T} - y I_{N})| \mathbf 1 \{\lambda_{0} \leq y \} \right] 
&\geq  e^{N\left( -\delta + \int \log |x-y| d\mu\right)}\mathbb P(\lambda_{0}\leq y)(1-e^{-\epsilon N^{2}}).
\end{align}
Thus, for $N$ large enough, we obtain for all $y \in E$
\begin{align}
\frac{\mathbb E \left[ |\det (M -\theta e_{N-1}e_{N-1}^{T} - y I_{N})| \mathbf 1 \{\lambda_{\ell} \leq y \} \right] }{\mathbb E \left[ |\det (M -\theta e_{N-1}e_{N-1}^{T} - y I_{N})| \mathbf 1 \{\lambda_{0} \leq y \} \right] } \leq \exp\left(N2\delta - \frac{\epsilon}{2} N^{2}\right) \frac{ \mathbb P(\lambda_{\ell}\leq y)}{ \mathbb P(\lambda_{0}\leq y)}. \label{eq:Sua2}
\end{align}
On the other hand, by our choice of $E$, there exists $\kappa>0$ such that $y < -\sqrt{2}+\kappa$ for all $y \in E$.
 By an application of the large deviation principle for the extreme eigenvalues of rank one perturbation of GOE \cite[Theorem 2.13]{Alice}, there exists $\rho>0$ depending only on $\kappa$ so that 
\begin{align}\label{eq:Sua4}
 \frac{ \mathbb P(\lambda_{\ell}\leq y)}{ \mathbb P(\lambda_{0}\leq y)} \leq \exp(-N \rho).
\end{align}
Plugging \eqref{eq:Sua4} into \eqref{eq:Sua2}, we find that there exists $\eta>0$ so that for $N$ large enough, and all $y\in E$ the bound \eqref{eq:goalofthisbound} is satisfied. This completes the proof of the proposition. 
\end{proof}

\section{The mean number of deep minima}\label{sec.dm}

In this section we prove Theorems \ref{thm1} and \ref{Prop_lamda}. 

\begin{proof}[Proof of Theorem \ref{thm1}] If $x_*\left(\lambda\right)\in E \text{ and }m_*\left(\lambda\right)\in M$, we prove Theorem \ref{thm1} by deriving an explicit formula for the constant $C\left(\lambda,p,k\right)$ as follows, 
\begin{align}
&\lim_{N\to \infty}\mathbb{E}[Crt_N\left(M,E\right)]=C\left(\lambda,p,k\right)=\frac{\sqrt{2}\left(\sqrt{p}\left(1-m_*\right)^{-\frac{3}{2}}-\lambda\left(k-1\right)m_*^{k-2}\right)h\left(y_*\right)}{p\left(\sqrt{y_*^2-2}-y_*\right)\sqrt{\left|\partial_{yy}\tilde{S}_{p,k}\left(m_*,y_*\right)g^{\prime\prime}\left(m_*\right)\right|}},\label{eq-contl}
\end{align}
where
\begin{equation}
y_*:=y_*\left(m_*\right),  y_*\left(m\right)=\frac{\lambda m^k}{2\sqrt{p}}\frac{p-2}{\sqrt{2\left(p-1\right)}}-\frac{p}{\sqrt{2\left(p-1\right)}}\sqrt{\left(\frac{\lambda m^k}{2\sqrt{p}}\right)^2+1},
\end{equation} $g\left(m\right)=\tilde{S}_{p,k}\left(m,y_*\left(m\right)\right)$, $h\left(\cdot\right)$ and  $I_1\left(\cdot\right)$ are defined in Theorem \ref{THM PR}.

Otherwise we show that $\tilde{S}_{p,k}\left(m_o,y_o\right)<0$ in Eq.(\ref{eq:sharp}). Therefore, 
\begin{align*}
	\lim_{N\to \infty}\mathbb{E}[Crt_N\left(M,E\right)]=0.
\end{align*}
A direct computation gives
\begin{equation}\label{eq:partialy}
-\partial_y\tilde{S}_{p,k}=\frac{p-2}{p}y+\frac{\lambda m^k}{p}\sqrt{\frac{2\left(p-1\right)}{p}}-\sqrt{y^2-2}.
\end{equation}
Let $A=\frac{p-2}{p}$, $B=\frac{\lambda m^k}{p}\sqrt{\frac{2\left(p-1\right)}{p}}$, then $-\partial_y\tilde{S}_{p,k}=Ay+B-\sqrt{y^2-2}$.

When $m\leq m_{\lambda}$, $-\frac{B}{A}\geq -\sqrt{2}$. Therefore $\partial_y\tilde{S}_{p,k}\geq 0$ on $\left(-\infty,-\sqrt{2}\right)$ and $\tilde{S}_{p,k}\left(m,\cdot\right)$ is increasing. 

When  $m\geq m_{\lambda}$, $-\frac{B}{A}\leq -\sqrt{2}$. Therefore   $\tilde{S}_{p,k}\left(m,\cdot\right)$ has a unique maximum in $\left(-\infty,-\sqrt{2}\right)$ at 
\begin{equation}
y_*\left(m\right)=\frac{AB-\sqrt{2+B^2-2A^2}}{1-A^2}=\frac{\lambda m^k}{2\sqrt{p}}\frac{p-2}{\sqrt{2\left(p-1\right)}}-\frac{p}{\sqrt{2\left(p-1\right)}}\sqrt{\left(\frac{\lambda m^k}{2\sqrt{p}}\right)^2+1}.\label{GSC1}
\end{equation}
We then define $g\left(m\right)=\tilde{S}_{p,k}\left(m,y_*\left(m\right)\right)$.
Plugging Eq. $\left(\ref{GSC1}\right)$ into $\tilde{S}_{p,k}\left(m,\cdot\right)$, we have
\begin{equation*}
g\left(m\right)=l\left(v\right)=\frac{1}{2}\log\left(1-m^2\right)+\left(1-2/m^2\right)v^2+v\sqrt{v^2+1}-\log\left(v+\sqrt{v^2+1}\right),
\end{equation*}
where $v=\frac{\lambda m^k}{2\sqrt{p}}$.

We compute $l^\prime\left(v\right)=2v\left(1-\frac{2}{m^2}+\frac{\sqrt{v^2+1}}{v}\right)$. Since $\frac{\sqrt{v^2+1}}{v}$ is decreasing, $l^\prime\left(v\right)=0$ on $\left(0,\infty\right)$ if and only if $v=\frac{m^2}{2\sqrt{1-m^2}}$  and 
\[l_{\max}=l\left(\frac{m^2}{2\sqrt{1-m^2}}\right)=0.\]

The maximum is achieved if and only if Eq.$\left(\ref{eq:minima}\right)$ holds, i.e. $\frac{\lambda m^k}{\sqrt{p}}=\frac{m^2}{\sqrt{1-m^2}}$.

As  mentioned in Lemma \ref{lem.UniqueSol}, when  $\lambda\geq \max\left\{\lambda^{\left(1\right)}\left(p,k\right), \lambda^{\left(2\right)}\left(p,k\right)\right\}$, there is a unique solution $m_*$ of Eq. $\left(\ref{eq:minima}\right)$ such that $m_*\geq m_\lambda$, 
and the above computation implies that
\begin{align}
\sup_{m_{\lambda}\leq  m < 1,y<-\sqrt{2}}\tilde{S}_{p,k}\left(m,y\right)=\sup_{m_{\lambda}\leq  m < 1}\tilde{S}_{p,k}\left(m,y_*\left(m\right)\right)=\tilde{S}_{p,k}\left(m_*,y_*\left(m_*\right)\right)=0.\label{high-l-max}
\end{align}
Recall that $\tilde{E}_m\subset \left(-\infty,-\sqrt{2}\right)$  and $y_*\left(m\right)\in \tilde{E}_m$  for any $m$. By Laplace's method, as $N\to \infty$, 
\begin{align}
I&\sim \frac{N}{2\pi\sqrt{p}}\int_{M}\sqrt{\frac{2\pi}{N\left|\partial_{yy}\tilde{S}_{p,k}\left(m,y_*\left(m\right)\right)\right|}}\left(1-m^2\right)^{-\frac{3}{2}}\frac{\sqrt{2}h\left(y_*\left(m\right)\right)}{\sqrt{y_*\left(m\right)-2}-y_*\left(m\right)}\exp\left(Ng\left(m\right)\right)dm\nonumber\\
& \sim \frac{1}{\sqrt{p\left|\partial_{yy}\tilde{S}_{p,k}\left(m_*,y_*\right)g^{\prime\prime}\left(m_*\right)\right|}} \left(1-m_*^2\right)^{-\frac{3}{2}}\frac{\sqrt{2}h\left(y_*\right)}{\sqrt{y_*-2}-y_*}\exp\left(Ng\left(m_*\right)\right)\nonumber\\
&=\frac{\sqrt{2}h\left(y_*\right)\left(1-m_*^2\right)^{-\frac{3}{2}}}{\left(\sqrt{y_*-2}-y_*\right)\sqrt{p\left|\partial_{yy}\tilde{S}_{p,k}\left(m_*,y_*\right)g^{\prime\prime}\left(m_*\right)\right|}}.\label{Eq-I}
\end{align}
Similarly, we apply Laplace method to $II$ and get
\begin{align}
II&\sim -\frac{\lambda \left(N-1\right)\left(k-1\right)}{2p\pi}\int_{M}dm\times  \nonumber\\
&\sqrt{\frac{2\pi}{\left(N-1\right)\left|\partial_{yy}\tilde{S}_{p,k}\left(m,y_*\left(m\right)\right)\right|}}m^{k-2}\frac{\sqrt{2}h\left(y_*\left(m\right)\right)}{\sqrt{y_*\left(m\right)-2}-y_*\left(m\right)}\exp\left(\left(N-1\right)g\left(m\right)\right)J\left(m,y_*\left(m\right)\right)\nonumber\\
&\sim \frac{\lambda\left(k-1\right)}{p}\frac{m_*^{k-2}\sqrt{2}h\left(y_*\right)\exp\left(\left(N-1\right)g\left(m_*\right)\right)J\left(m_*,y_*\right)}{\left(\sqrt{y_*-2}-y_*\right)\sqrt{\left|\partial_{yy}\tilde{S}_{p,k}\left(m_*,y_*\right)g^{\prime\prime}\left(m_*\right)\right|}}\nonumber\\
&=\frac{\sqrt{2}\lambda\left(k-1\right)m_*^{k-2}h\left(y_*\right)J\left(m_*,y_*\right)}{p\left(\sqrt{y_*-2}-y_*\right)\sqrt{\left|\partial_{yy}\tilde{S}_{p,k}\left(m_*,y_*\right)g^{\prime\prime}\left(m_*\right)\right|}}.\label{Eq-II}
\end{align}	
 It remains to show that $J\left(m_*,y_*\right)=1$. This is obtained as follows. Using Eq.$\left(\ref{eq:minima}\right)$, 
\begin{align*}
&-\frac{\lambda^2}{2p^2}m_*^{2k-2}\left(p\left(1-m_*^2\right)+m_*^2\right)+\frac{\lambda m_*^ky_*}{2p}\sqrt{\frac{2\left(p-1\right)}{p}}\\
&=\frac{\lambda m_*^k}{2p}\left(p\left(1-m_*^2\right)+m_*^2\right)\left(\frac{1}{\sqrt{p\left(1-m_*^2\right)}}-\frac{1}{\sqrt{p\left(1-m_*^2\right)}}\right)=0.
\end{align*}
Combining this with Eq. $\left(\ref{Eq-I}\right)$ and $\left(\ref{Eq-II}\right)$, we get Eq. $\left(\ref{eq-contl}\right)$.
\end{proof}

\begin{proof}[Proof of Theorem \ref{Prop_lamda}]
	Since $m_*$ satisfies Eq.$\left(\ref{eq:minima}\right)$, as $\lambda\to \infty$, $m_*\to 1$. When $k=1$ or $2$ this can be obtained directly from Eq.$\left(\ref{eq-m*12}\right)$. When $k\geq 3$, \color{black}by Implicit differentiation theorem, denoting $m_*^\prime:=\frac{d}{d\lambda}m_*\left(\lambda\right)$, we have
	\begin{align*}
	&\hspace{1cm}\frac{m_*^k}{\sqrt{p}}+\frac{\lambda k m_*^{k-1}}{\sqrt{p}}m_*^\prime=\left(2m_*\left(1-m_*^2\right)^{-\frac{1}{2}}+m_*^3\left(1-m_*^2\right)^{-\frac{3}{2}}\right)m_*^\prime\\
	&\implies \frac{m_*^k}{\sqrt{p}}= m_*^\prime\left(1-m_*^2\right)^{-\frac{3}{2}}m_*\left(\left(k-1\right)m_*^2-\left(k-2\right)\right).
	\end{align*}
	Since $m_*>\sqrt{\frac{k-2}{k-1}}$, so  $\left(k-1\right)m_*^2-\left(k-2\right)>0$ and thus $m^\prime_*>0$. Therefore,  as $\lambda\to \infty$, $m_*\left(\lambda\right)\uparrow 1$.

	We also have
	\begin{align}
	y_*\left(m_*\right)&=\frac{p-2}{2\sqrt{2\left(p-1\right)}}m_*^2\left(1-m_*^2\right)^{-\frac{1}{2}}-\frac{p}{\sqrt{2\left(p-1\right)}}\sqrt{\frac{m_*^4}{4\left(1-m_*^2\right)}+1}\nonumber\\
	&=\frac{1}{\sqrt{2\left(p-1\right)}}\left(1-m_*^2\right)^{-\frac{1}{2}}\left(\frac{p-2}{2}m_*^2-\frac{p}{2}\left(2-m_*^2\right)\right)\nonumber\\
	&=\frac{1}{\sqrt{2\left(p-1\right)}}\left(1-m_*^2\right)^{-\frac{1}{2}}\left(\left(p-1\right)m_*^2-p\right). \label{y*}
	\end{align}
	Therefore, as $\lambda\to \infty$,
	\begin{align}
	y_*\left(m_*\right)\sim -\frac{1}{\sqrt{2\left(p-1\right)}}\left(1-m_*^2\right)^{-\frac{1}{2}}.\label{eq-y}
	\end{align}
	Note that 
	\begin{align*}
	{y_*}^\prime\left(m\right)=\frac{p-2}{2\sqrt{2p\left(p-1\right)}}\lambda km^{k-1}-\frac{p}{\sqrt{2\left(p-1\right)}}\left(\left(\frac{\lambda m^k}{2\sqrt{p}}\right)^2+1\right)^{-\frac{1}{2}}\frac{\lambda m^k}{2\sqrt{p}}\cdot \frac{\lambda km^{k-1}}{2\sqrt{p}}.
	\end{align*}
	Using Eq.$\left(\ref{eq:minima}\right)$, we have as $\lambda\to \infty$,
	\begin{align}
	y^\prime_*\left(m_*\right)&=\frac{k\left(p-2\right)}{2\sqrt{2\left(p-1\right)}}m_*\left(1-m_*^2\right)^{-\frac{1}{2}}-\frac{kp}{2\sqrt{2\left(p-1\right)}}\frac{m_*^3}{2-m_*^2}\left(1-m_*^2\right)^{-\frac{1}{2}}\nonumber\\
	&=\frac{k}{\sqrt{2\left(p-1\right)}}\left(1-m_*^2\right)^{-\frac{1}{2}}\frac{\left(p-2\right)m_*-\left(p-1\right)m_*^3}{2-m_*^2}\nonumber\\
	&\sim -\frac{k}{\sqrt{2\left(p-1\right)}}\left(1-m_*^2\right)^{-\frac{1}{2}},\label{eq-yp}
	\end{align}
and 
	\begin{align}
	\partial_{yy}\tilde{S}_{p,k}\left(m_*,y_*\right)&=\frac{2-p}{p}+y_*\left(y_*^2-2\right)^{-\frac{1}{2}}\nonumber\\
	&\sim  -\frac{2\left(p-1\right)}{p}.
	\end{align}
	We also compute for $k\geq 1$,
	\begin{align*}
	\partial_m\tilde{S}_{p,k}=-\frac{m}{1-m^2}-\frac{\lambda k m^{k-1}}{p}\sqrt{\frac{2\left(p-1\right)}{p}}y-\frac{\lambda^2\left(k-1\right)}{p}m^{2k-3}+\frac{\lambda^2 k\left(p-1\right)}{p^2}m^{2k-1}.
	\end{align*}
For $k\geq 2$,
	\begin{align*}
	\partial_{mm}\tilde{S}_{p,k}=&-\frac{1+m^2}{\left(1-m^2\right)^2}-\frac{\lambda k\left(k-1\right) m^{k-2}}{p}\sqrt{\frac{2\left(p-1\right)}{p}}y-\frac{\lambda^2\left(k-1\right)\left(2k-3\right)}{p}m^{2k-4}\\
	&+\frac{\lambda^2 k\left(p-1\right)\left(2k-1\right)}{p^2}m^{2k-2},
	\end{align*}
	$\partial_{mm}\tilde{S}_{p,1}=\frac{1+m^2}{-\left(1-m^2\right)^2}+\frac{\left(p-1\right)\lambda^2}{p^2}$, and for $k\geq 2$,
	\begin{align*}
	\partial_{my}\tilde{S}_{p,k}=-\frac{\lambda k m^{k-1}}{p}\sqrt{\frac{2\left(p-1\right)}{p}}.
	\end{align*}
		Using Eq.$\left(\ref{eq:minima}\right)$ and $\left(\ref{eq-y}\right)$, we have as $\lambda\to \infty$, 
			\begin{align}
		\partial_{mm}\tilde{S}_{p,k}\left(m_*,y_*\left(m_*\right)\right)=&-\left(1+m_*^2\right)\left(1-m_*^2\right)^{-2}-
		\frac{k\left(k-1\right)}{p}\left(1-m_*^2\right)^{-1}\left(\left(p-1\right)m_*^2-p\right)\nonumber\\
		&-\left(k-1\right)\left(2k-3\right)\left(1-m_*^2\right)^{-1}-\frac{k\left(p-1\right)\left(2k-1\right)m_*^2}{p}\left(1-m_*^2\right)^{-1}\nonumber\\
		&\sim -2\left(1-m_*^2\right)^{-2},\label{eq-mm}
		\end{align}
		and
		\begin{align}
		\partial_{my}\tilde{S}_{p,k}\left(m_*,y_*\left(m_*\right)\right)=\frac{k\sqrt{2\left(p-1\right)}}{p}m_*\left(1-m^2_*\right)^{-\frac{1}{2}}\sim \frac{k\sqrt{2\left(p-1\right)}}{p}\left(1-m^2_*\right)^{-\frac{1}{2}}.\label{eq-my}
		\end{align}
		Recall that $g\left(m\right)=\tilde{S}_{p,k}\left(m,y_*\left(m\right)\right)$, so 
		\begin{align}
		g^{\prime\prime}=\partial_{mm}\tilde{S}_{p,k}+2\partial_{my}\tilde{S}_{p,k}\cdot y_*^\prime+\partial_{yy}\left(y_*^\prime\right)^2+\partial_y\tilde{S}_{p,k}\cdot y^{\prime\prime}.
		\end{align} 
		Note that $\partial_y\tilde{S}_{p,k}\left(m_*,y_*\left(m_*\right)\right)=0$, using Eq. $\left(\ref{eq-mm}\right)$, $\left(\ref{eq-my}\right)$, $\left(\ref{eq-y}\right)$ and $\left(\ref{eq-yp}\right)$, we know that as $\lambda\to \infty$,
		\begin{align}
		g^{\prime\prime}\left(m_*\right)&\sim -2\left(1-m_*^2\right)^{-2}+\frac{2k\sqrt{2\left(p-1\right)}}{p}\left(1-m_*^2\right)^{-\frac{1}{2}}\cdot\left(\frac{-k}{\sqrt{2\left(p-1\right)}}\right)\left(1-m_*^2\right)^{-\frac{1}{2}}\nonumber\\
		&+\frac{2\left(p-1\right)}{p}\frac{k^2}{2\left(p-1\right)}\left(1-m_*^2\right)^{-1}\nonumber\\
		&\sim -2\left(1-m_*^2\right)^{-2}.\label{eq-g''}
		\end{align}
	   From the definition of $h\left(\cdot\right)$ in Theorem \ref{THM PR} and Eq. $\left(\ref{eq-y} \right)$, it is easy to see
	   \[\lim_{\lambda\to \infty}h\left(y_*\left(m_*\right)\right)=2.\]
	   To sum up, as $\lambda\to \infty$,
	   \begin{align}
	   C\left(\lambda,p,k\right)&\sim 
	   2\frac{\left(\sqrt{p}\left(1-m_*^2\right)^{-\frac{3}{2}}-\lambda\left(k-1\right)m_*^{k-2}\right)}{p\sqrt{2}y_*\left(m_*\right)\sqrt{\left|\partial_{yy}\tilde{S}_{p,k}\left(m_*,y_*\right)g^{\prime\prime}\left(m_*\right)\right|}}\nonumber\\
	   &\sim\frac{2\left(\sqrt{p}\left(1-m_*^2\right)^{-\frac{3}{2}}-\left(k-1\right)\sqrt{p}\left(1-m_*^2\right)^{-\frac{1}{2}}\right)}{-p\sqrt{2}y_*\left(m_*\right)\sqrt{2\frac{p-1}{p}\cdot 2\left(1-m_*^2\right)^{-2}}}\nonumber\\
	   &\sim \frac{1}{\sqrt{p-1}}\left(1-m_*^2\right)^{-\frac{1}{2}}\cdot \sqrt{\left(p-1\right)}\left(1-m_*^2\right)^{\frac{1}{2}}\\
	   &=1.
	   \end{align}

\end{proof}
\section{Limiting ground state energy}\label{sec.GSE}
In this section we prove Theorem \ref{THM_GSE}. The proof relies on the following two propositions whose proofs are presented after the proof of Theorem \ref{THM_GSE}. 
\begin{proposition}\label{Prop_UB}
	For any $m\in \left(0,1\right)$, 
	\begin{equation}
	\limsup_{N\to \infty}\mathbb{E}\left[\frac{1}{N}\min_{\sigma \in S^{N-1}\left(\sqrt{N}\right)} H_{N}(\sigma)\right]\leq -\frac{\lambda m^k}{k}-\sqrt{p\left(1-m^2\right)}.
	\end{equation}
\end{proposition}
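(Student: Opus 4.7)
The plan is to restrict the minimization to a parallel of fixed latitude $m$, and bound the resulting restricted minimum of the random $p$-spin part using the variance structure of the restricted Gaussian field. For any $m\in(0,1)$, set $P_m = \{\sigma\in S^{N-1}(\sqrt N):\sigma\cdot\textbf{v}_0 = Nm\}$. On $P_m$ the spike term is constant and equal to $-\lambda Nm^k/k$, so
\[
\min_{\sigma\in S^{N-1}(\sqrt N)} H_N(\sigma) \;\leq\; \min_{\sigma\in P_m}H_N(\sigma) \;=\; -\frac{\lambda Nm^k}{k} + \min_{\sigma\in P_m}\widetilde H_N(\sigma),
\]
where $\widetilde H_N(\sigma) := \frac{1}{N^{(p-1)/2}}\sum J_{i_1,\ldots,i_p}\sigma_{i_1}\cdots\sigma_{i_p}$ is the pure random $p$-spin part of $H_N$. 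After taking expectations and dividing by $N$, it suffices to establish
\[
\limsup_{N\to\infty}\frac{1}{N}\mathbb{E}\!\left[\min_{\sigma\in P_m}\widetilde H_N(\sigma)\right] \;\leq\; -\sqrt{p(1-m^2)}.
\]

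To bound the restricted minimum, parametrize $\sigma\in P_m$ by $\sigma = \sqrt N\,m\,\hat{\textbf{v}}_0 + \sqrt{N(1-m^2)}\,\hat\tau$ with $\hat\tau \in S^{N-2}$ a unit vector in $\textbf{v}_0^\perp$. The restricted field $X(\hat\tau):=\widetilde H_N(\sigma(\hat\tau))$ is a centered, stationary Gaussian field on $S^{N-2}$ with covariance $\mathbb{E}[X(\hat\tau)X(\hat\tau')] = N\xi_m(\hat\tau\cdot\hat\tau')$ where $\xi_m(q):=(m^2+(1-m^2)q)^p$. The target constant $\sqrt{p(1-m^2)} = \sqrt{\xi_m'(1)}$ is the natural tangential-gradient scale of $\widetilde H_N$ along $P_m$: a direct computation from the $p$-spin covariance $(\sigma\cdot\sigma')^p/N^{p-1}$ shows that each of the $N-2$ tangential components of the Euclidean gradient $\nabla\widetilde H_N$ has variance $p$, so $\|\nabla_{P_m}\widetilde H_N\|$ is of order $\sqrt{pN}$; combined with the intrinsic diameter $\pi\sqrt{N(1-m^2)}$ of $P_m$, the field is expected to vary by order $N\sqrt{p(1-m^2)}$ across $P_m$. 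To make this rigorous, I would expand the covariance binomially,
\[
N\xi_m(q) \;=\; N\sum_{l=0}^p \binom{p}{l} m^{2(p-l)}(1-m^2)^l \, q^l,
\]
which decomposes $X$ as a sum of $p+1$ mutually independent pure $l$-spin Gaussian fields on $S^{N-2}$, and then bound $\mathbb{E}[\min X]$ using either a Sudakov--Fernique (or Gordon-type) comparison with a carefully chosen auxiliary Gaussian process, or the Parisi/Crisanti--Sommers ground-state formula for the mixed spherical spin glass with mixing $\xi_m$, which in the relevant regime reduces to the replica-symmetric value $-\sqrt{\xi_m'(1)}$ per site.

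The main obstacle is producing the exact constant $\sqrt{p(1-m^2)}$. A naive Sudakov--Fernique comparison against the linear ``$1$-spin'' component of the decomposition alone yields only $-\sqrt{p(1-m^2)}\,m^{p-1}$, and a comparison against the pure $p$-spin component alone yields $-E_p(1-m^2)^{p/2}$ involving the Parisi ground-state constant $E_p$; neither matches the target in general. Obtaining the sharp $\sqrt{p(1-m^2)}$ requires exploiting the full mixed structure of $\xi_m$---either through a finely tuned auxiliary Gaussian field that saturates the variance of increments of $X$ globally (not merely infinitesimally near $q=1$), or through a direct spin-glass-theoretic computation showing that the restricted mixed model is replica-symmetric at zero temperature with ground state $-\sqrt{\xi_m'(1)}$. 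Once this key estimate is secured, combining it with the restriction to $P_m$ and taking $\limsup$ yields the claimed upper bound.
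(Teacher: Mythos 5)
Your proposal follows the same route as the paper: restrict to the parallel $P_m$, note the spike term is constant there, and identify the restricted $p$-spin part as a mixed spherical spin glass on (essentially) $S^{N-2}$ governed by the mixture $\xi_m(q)=(m^2+(1-m^2)q)^p$ --- the paper writes $\xi(q)=(m^2+(1-m^2)q)^p - m^{2p}$, the discarded $m^{2p}$ being the $\sigma$-independent $l=0$ component, which has mean zero and does not affect $\mathbb{E}[\min]$. What you frame as the main obstacle --- extracting the sharp constant $\sqrt{\xi_m'(1)}=\sqrt{p(1-m^2)}$ for the ground state of this mixed model --- is precisely what the paper closes with a citation: Proposition 1 of Chen--Sen \cite{CS2015} (see also Theorem 1.10 of \cite{Jagannath17}), the zero-temperature Crisanti--Sommers/Parisi formula for spherical mixed $p$-spin models, which the paper evaluates for $\xi_m$. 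Your observation that a Sudakov--Fernique comparison against either the linear or the pure top-degree piece alone cannot produce $\sqrt{\xi_m'(1)}$ is correct, and the ``direct spin-glass-theoretic computation'' you list as the alternative is exactly the cited result; it is a known theorem, not something to re-derive. So you have the right reduction and name the right lemma among your candidates --- the missing step is simply committing to that citation rather than presenting it as an open dichotomy.
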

\begin{proposition}\label{Prop_LB}
		\begin{equation}
		\liminf_{N\to \infty}\frac{1}{N}\min_{\sigma \in S^{N-1}\left(\sqrt{N}\right)} H_{N}(\sigma)\geq \lambda m_*^k\left(\frac{1}{2}-\frac{1}{k}\right)-\sqrt{\frac{\lambda^2m_*^{2k}}{4}+p}  \text{      }\text{       }a.s..\label{LB}
		\end{equation}
\end{proposition}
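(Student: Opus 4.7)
The plan is to combine a first moment (Kac-Rice) bound with Borel-Cantelli. I first observe that the right-hand side of \eqref{LB} coincides with $x_*(\lambda)=-\lambda m_*^k/k-\sqrt{p(1-m_*^2)}$: substituting the defining relation $\lambda m_*^k=\sqrt{p}\,m_*^2/\sqrt{1-m_*^2}$ from \eqref{eq:minima} into $\lambda m_*^k(\tfrac12-\tfrac1k)-\sqrt{\lambda^2m_*^{2k}/4+p}$ and simplifying produces exactly $x_*(\lambda)$. So it suffices to show $\liminf_N L_N/N\geq x_*(\lambda)$ almost surely for $\lambda$ large enough.

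Since the sphere is compact and $H_N$ smooth, every minimizer is a critical point, so for each $\varepsilon>0$,
\[\bigl\{L_N/N<x_*(\lambda)-\varepsilon\bigr\}\subseteq \bigl\{\Crt_N((-1,1),[-K,x_*(\lambda)-\varepsilon))\geq 1\bigr\}\cup\bigl\{L_N/N\leq -K\bigr\}.\]
A crude deterministic bound on $H_N$ (using classical ground-state estimates for pure spherical $p$-spin together with $|m|\leq 1$) produces a constant $K$, independent of $N$, for which the second event has probability at most $e^{-cN}$. For the first event Markov gives
\[\mathbb{P}\bigl(\Crt_N((-1,1),[-K,x_*(\lambda)-\varepsilon))\geq 1\bigr)\leq \mathbb{E}\bigl[\Crt_N((-1,1),[-K,x_*(\lambda)-\varepsilon))\bigr].\]
For $\lambda$ large enough this energy window lies below $-2\sqrt{(p-1)/p}-|1/p-1/k|\lambda$, so \eqref{cond:E} holds and Corollary \ref{Col1} applies, yielding
\[\limsup_N\tfrac1N\log\mathbb{E}[\Crt_N(\cdots)]\leq \sup_{m\in[-1,1],\,x\in[-K,x_*(\lambda)-\varepsilon]}S_{p,k}(m,x).\]

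The main step is to show this supremum is strictly negative. I would split into the high-latitude region $|m|\geq m_\lambda$ and the low-latitude region $|m|\leq m_\lambda$. In the high-latitude region, the proof of Theorem \ref{thm1} identifies $y_*(m)$ as the internal maximizer of $\tilde S_{p,k}(m,\cdot)$ and shows $g(m)=\tilde S_{p,k}(m,y_*(m))\leq 0$ with equality only at $m=m_*(\lambda)$, which corresponds to the energy $x_*(\lambda)$; a direct sign check of $\partial_x S_{p,k}$ in the deep-energy regime then gives strict negativity for $x<x_*(\lambda)-\varepsilon$. For the low-latitude region, the required non-positivity at $x=x_*(\lambda)$ is exactly the defining condition of the trivialization threshold $\lambda_{tr}$ in Remark \ref{rem:TT}, so it holds for $\lambda$ large enough, and the same monotonicity of $S_{p,k}$ in $x$ transfers this to strict negativity on $x\leq x_*(\lambda)-\varepsilon$. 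Combining the two regions produces $\sup_{m,x}S_{p,k}(m,x)\leq -\delta(\varepsilon)<0$ on the truncated window.

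Putting the pieces together, $\mathbb{P}(L_N/N<x_*(\lambda)-\varepsilon)\leq e^{-N\delta(\varepsilon)}$ is summable, so Borel-Cantelli yields $\liminf_N L_N/N\geq x_*(\lambda)-\varepsilon$ almost surely, and letting $\varepsilon\downarrow 0$ finishes the proof. The main obstacle is the uniform strict negativity of the annealed complexity just below $x_*(\lambda)$: the high-latitude region is handled by the explicit optimization performed in the proof of Theorem \ref{thm1}, but the low-latitude region genuinely requires $\lambda$ above the trivialization threshold, which is exactly what forces the hypothesis $\lambda>c$ in the statement.
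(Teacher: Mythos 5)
Your proposal is correct and follows essentially the same route as the paper: identify the right-hand side with $x_*(\lambda)$, bound $\mathbb{P}(L_N/N < x_*-\epsilon)$ by the first moment of $\Crt_N$ over the energy window below $x_*-\epsilon$, split the overlap range into the low-latitude region $|m|\le m_\lambda$ and the high-latitude region $|m|\ge m_\lambda$, show the annealed complexity is strictly negative on each piece (using the maximization $\sup_{m\ge m_\lambda}\tilde S_{p,k}(m,y_*(m))=0$ attained only at $m_*$ together with monotonicity of $\tilde S_{p,k}(m,\cdot)$ for the high-latitude part, and the trivialization-phase negativity for the low-latitude part), then close with Markov and Borel--Cantelli. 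Two small remarks: your truncation of the energy window to $[-K,x_*-\epsilon)$ is a welcome refinement, since Theorem \ref{THM1}/Corollary \ref{Col1} are stated for bounded $E$ while the paper implicitly takes $E=(-\infty,x_*-\epsilon)$; and for the low-latitude region the paper does not cite the $\lambda_{tr}$ condition directly but instead proves a self-contained statement (Proposition \ref{prop.low-latitude-complexity}) giving strict negativity of $S_{p,k}(\cdot,x_*)$ on $[0,m_\lambda]$ for $\lambda$ large, though the net effect — the hypothesis $\lambda>c$ — is the same under either phrasing.
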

\begin{proof}[Proof of Theorem \ref{THM_GSE} assuming Proposition \ref{Prop_LB} and \ref{Prop_UB}]
	By Gaussian concentration inequality and Borel-Canteli lemma, 
	\[\lim_{N\to \infty}\frac{1}{N}\min_{\sigma \in S^{N-1}\left(\sqrt{N}\right)} H_{N}(\sigma)=\lim_{N\to \infty}\mathbb{E}\left[\frac{1}{N}\min_{\sigma \in S^{N-1}\left(\sqrt{N}\right)} H_{N}(\sigma)\right] a.s..\]
	Therefore it suffices to show 
	\begin{align}
	-\frac{\lambda m_*^k}{k}-\sqrt{p\left(1-m_*^2\right)}=\lambda m_*^k\left(\frac{1}{2}-\frac{1}{k}\right)-\sqrt{\frac{\lambda^2m_*^{2k}}{4}+p}.\label{eq:lu}
	\end{align}
	Using Eq. $\left(\ref{eq:minima}\right)$, 
	\begin{align*}
	\lambda m_*^k/2-\sqrt{\frac{\lambda^2m_*^{2k}}{4}+p}&=\frac{\sqrt{p}m_*^2}{2\sqrt{1-m_*^2}}-\sqrt{\frac{p\left(m_*^4-4m_*^2+4\right)}{4\left(1-m_*^2\right)}}\\
	&=\frac{\sqrt{p}\left(m_*^2+m_*^2-2\right)}{2\sqrt{1-m_*^2}}=-\sqrt{p\left(1-m_*^2\right)}.
	\end{align*}
Therefore Eq.$\left(\ref{eq:lu}\right)$ holds.
\end{proof}

Now we prove Proposition \ref{Prop_UB} and \ref{Prop_LB}.
\begin{proof}[Proof of Proposition \ref{Prop_UB}]

	For any $m\in \left(0,1\right)$,
	\begin{align*}
	\frac{1}{N}\min_{\sigma \in S^{N-1}\left(\sqrt{N}\right),\sigma\cdot \textbf{v}_0=m} H_{N}(\sigma)\geq \frac{1}{N}\min_{\sigma \in S^{N-1}\left(\sqrt{N}\right)} H_{N}(\sigma),
	\end{align*}
	so 
	\[\mathbb{E}\left[\frac{1}{N}\min_{\sigma \in S^{N-1}\left(\sqrt{N}\right),\sigma\cdot \textbf{v}_0=m} H_{N}(\sigma)\right]\geq \mathbb{E}\left[\frac{1}{N}\min_{\sigma \in S^{N-1}\left(\sqrt{N}\right)} H_{N}(\sigma)\right].\]
	 Since $H_{N}$ is isotropic, without loss of generality, we assume $\textbf{v}_0=\sqrt{N}e_N$, then conditional on $\sigma_N=\sqrt{N}m$, 
	 \[H_{N}\left(\sigma\right)=-\lambda N\frac{m^k}{k}-\sqrt{N}J_{NN\dots N}m^p-\frac{1}{N^{\frac{p-1-l}{2}}}\sum_{l=0}^{p-1}m^l\sum_{i_{k_j}=N,1\leq i_k\leq N-1, k\neq k_j, j\in \left[l\right]}J_{i_1,i_2,\dots,i_p}\frac{\sigma_{i_1}\sigma_{i_2}\cdots \sigma_{i_p}}{\sigma_{i_{k_1}}\sigma_{i_{k_2}}\cdots \sigma_{i_{k_l}}}.\]
	 Since for different sets of $\left(i_{k_j}\right)_{j=1}^l$, $J_{i_1,i_2,\dots,i_p}$ are i.i.d, so 
	 \begin{align*}
	 H_{N}\left(\sigma\right)&\overset{\left(d\right)}{=}-\lambda N\frac{m^k}{k}-\sqrt{N}J_{NN\dots N}m^p\\
	 &-\sqrt{N}\sum_{l=0}^{p-1}{p \choose l}^{\frac{1}{2}}m^l\left(1-m^2\right)^{\frac{p-l}{2}}\sum_{1\leq i_1,i_2,\dots i_{p-l}\leq N-1}g_{i_1,i_2,\dots, i_{p-1}}\hat{\sigma}_{i_1}\hat{\sigma}_{i_2}\cdots \hat{\sigma}_{i_{p-l}}
	 \end{align*}
	 where $\hat{\sigma}_{k}={\sigma}_{k}/\sqrt{N\left(1-m^2\right)}, k\in \left[N-1\right]$.
	 
	 Note that $\sum_{k=1}^{N-1}\hat{\sigma}_{k}^2=1$, therefore
	 \begin{equation*}
	 -\sqrt{N}\sum_{l=0}^{p-1}{p-1 \choose l}^{\frac{1}{2}}m^l\left(1-m^2\right)^{\frac{p-l}{2}}\sum_{1\leq i_1,i_2,\dots i_{p-l}\leq N-1}g_{i_1,i_2,\dots, i_{p-1}}\hat{\sigma}_{i_1}\hat{\sigma}_{i_2}\cdots \hat{\sigma}_{i_{p-l}}
	 \end{equation*}
	 is a spherical mixed p-spin model with mixture \[\xi\left(x\right)=\sum_{l=0}^{p-1}{p \choose l}m^{2l}\left(1-m^2\right)^{p-l}x^{p-l}=\left(m^2+\left(1-m^2\right)x\right)^p-m^{2p}.\]
	 By Proposition 1 in \cite{CS2015}(see also Theorem 1.10 in \cite{Jagannath17}),
	 \begin{align*}
	 \mathbb{E}\left[\frac{1}{N}\min_{\sigma \in S^{N-1}\left(\sqrt{N}\right),\sigma\cdot \textbf{v}_0=m} H_{N}(\sigma)\right]=-\frac{\lambda m^k}{k}-\sqrt{\xi^\prime\left(1\right)}=-\frac{\lambda m^k}{k}-\sqrt{p\left(1-m^2\right)}.
	 \end{align*}
 \end{proof}
Recall that in this paper we reserve the symbol $x_*$ for the the right hand side of Eq.$\left(\ref{LB}\right)$, i.e.
	\[x_*:=\lambda m_*^k\left(\frac{1}{2}-\frac{1}{k}\right)-\sqrt{\frac{\lambda^2m_*^{2k}}{4}+p}.\]
The key to proving Proposition \ref{Prop_LB} is to identify the point at which 0, the supremum of the complexity function $S_{p,k}$,  is attained. The following proposition shows that the point lies in the high-latitude region of the sphere.
\begin{proposition}\label{prop.low-latitude-complexity}There exists a constant $\tilde{\lambda}_c=\tilde{\lambda}_c\left(p,k\right)$ such that for any $\lambda \geq \tilde{\lambda}_c $,  $M=\left(0,m_{\lambda}\right)$, 
	\begin{align*}
	\sup_{m\in \bar{M}}S_{p,k}\left(m,x_*\right)< 0.
	\end{align*} 
\end{proposition}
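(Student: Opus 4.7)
The plan is to show that $S_{p,k}(m, x_*) = \tilde S_{p,k}(m, y(x_*, m))$ tends to $-\infty$ uniformly on $[0, m_\lambda]$ as $\lambda \to \infty$, which immediately yields the strict negativity of the supremum for $\lambda$ beyond some threshold $\tilde\lambda_c(p,k)$. The argument rests on three asymptotic facts as $\lambda \to \infty$: (i) by the defining equation $\lambda m_*^k/\sqrt p = m_*^2/\sqrt{1-m_*^2}$ one has $m_*(\lambda) \to 1$ with $\sqrt{1-m_*^2} = \sqrt p\, m_*^{2-k}/\lambda = O(1/\lambda)$; (ii) $m_\lambda \to 0$ while $\lambda m_\lambda^k = (p-2)\sqrt p/\sqrt{p-1}$ is constant; and (iii) the large-argument expansion $I_1(-y) = y^2/2 - \log|y| + O(1)$ as $-y \to \infty$, which follows directly from the antiderivative $\int \sqrt{t^2-2}\,dt = \tfrac{t}{2}\sqrt{t^2-2} - \log|t+\sqrt{t^2-2}|$.

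Substituting $x_* = -\lambda m_*^k/k - \sqrt{p(1-m_*^2)}$ into the definition of $y(x,m)$, I would write $y(x_*, m) = -\lambda B/\sqrt{2p(p-1)}$, where
\[
B := \frac{p m_*^k}{k} + \Bigl(1 - \frac{p}{k}\Bigr) m^k + \frac{p\sqrt{p(1-m_*^2)}}{\lambda}.
\]
By (i) and (ii), $B$ is bounded above and bounded away from $0$ uniformly on $[0, m_\lambda]$, so $y(x_*, m) \to -\infty$ uniformly. Plugging this $y$ into the formula for $\tilde S_{p,k}$ and combining the three $\lambda^2$-terms via the algebraic identity
\[
-B^2 + 2m^k B - m^{2k-2}\bigl(p + (1-p)m^2\bigr) = -(B - m^k)^2 - p\, m^{2k-2}(1-m^2),
\]
together with the expansion (iii), yields
\[
\tilde S_{p,k}(m, y(x_*, m)) = -\frac{\lambda^2}{2p^2}\Bigl[(B - m^k)^2 + p\, m^{2k-2}(1-m^2)\Bigr] + O(\log \lambda),
\]
where the $O(\log\lambda)$ absorbs the bounded terms $\tfrac{1}{2}\log((1-m^2)(p-1))$ and $\log|y|$.

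Now I would write $B - m^k = (p/k)(m_*^k - m^k) + p^{3/2}\sqrt{1-m_*^2}/\lambda$. Using (i) and the fact that $m^k \leq m_\lambda^k \to 0$, the expression $(B - m^k)^2$ converges to $(p/k)^2$ uniformly on $[0, m_\lambda]$. Consequently, for all $\lambda$ sufficiently large, $(B - m^k)^2 \geq p^2/(4k^2)$ uniformly on the low-latitude interval, so
\[
\sup_{m \in [0, m_\lambda]} S_{p,k}(m, x_*) \leq -\frac{\lambda^2}{8k^2} + O(\log \lambda),
\]
which is strictly negative once $\lambda \geq \tilde\lambda_c(p, k)$ for some threshold depending only on $p$ and $k$.

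The main obstacle is guaranteeing uniformity in $m$ of each asymptotic step, given that both the domain $[0, m_\lambda]$ and the functional form depend on $\lambda$. This is controlled by the twin facts that $m_\lambda \to 0$ and $\lambda m_\lambda^k$ is constant, which together force $\lambda m^k$, $(1-m^2)$, and $m^{2k-2}$ to be uniformly bounded on the low-latitude interval. No delicate cancellation is required; the entire estimate is driven by the dominant $-\lambda^2/(2k^2)$ contribution emerging from $(B - m^k)^2 \to (p/k)^2$.
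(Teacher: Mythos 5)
Your proof is correct and takes a genuinely different route from the paper. The paper's argument is structural: it sets $f(m) := S_{p,k}(m, x_*)$, computes $f'(m) = -\tfrac{m}{1-m^2}f_1(m)$ with $f_1$ a quadratic in the auxiliary variable $u = \lambda m^{k-2}(1-m^2)$, and shows (splitting into $k\le p$ and $k>p$) that $f$ has at most one critical point on $(0,m_\lambda)$, necessarily a local minimum, hence $\sup_{\bar M}f = \max\{f(0), f(m_\lambda)\}$; it then establishes $f(0) = \Phi_p(y_*) < 0$ via monotonicity of the pure $p$-spin complexity function $\Phi_p$ and $f(m_\lambda) \le 0$ via the high-latitude analysis in the proof of Theorem~\ref{thm1} (Eq.~\eqref{high-l-max}). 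Your argument bypasses this endpoint reduction entirely: you substitute $x_*$ to get $y = -\lambda B/\sqrt{2p(p-1)}$ with $B \to p/k$ uniformly on $[0,m_\lambda]$ (the twin facts $m_*\to1$ and $m_\lambda\to 0$ doing the work), apply the expansion $I_1(-y) = y^2/2 - \log|y| + O(1)$, and collect the $O(\lambda^2)$ contributions via the identity $-B^2 + 2m^kB - m^{2k-2}\left(p+(1-p)m^2\right) = -(B-m^k)^2 - pm^{2k-2}(1-m^2)$, exposing a strictly negative leading $\lambda^2$-coefficient uniformly on the low-latitude interval. Your route is more self-contained, since it avoids invoking the pure $p$-spin ground state threshold and the critical-point count of $f_1$, and it yields the strictly stronger conclusion that $\sup_{\bar M}S_{p,k}(\cdot,x_*)\to-\infty$ as $\lambda\to\infty$; the paper's route, by contrast, clarifies where on $\bar M$ the supremum is attained and is better placed to pin down a sharp value of $\tilde\lambda_c$ rather than merely asserting its existence.
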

\begin{proof}
	Using the correspondence between $y_*$ and $x_*$ $\left(\text{see Eq.      }\left(\ref{y}\right)\right)$,  we have
	\begin{align*}
	f\left(m\right):=\tilde{S}_{p,k}\left(m,y_*\right)=S_{p,k}\left(m,x_*\right).
	\end{align*}
	We will first show that $f\left(m\right)$ has at most one critical point on  $M$, and if it exists,  it must be  a local minimum of $f$, then we use the results on the pure p-spin model from \cite{ABC13} and Theorem \ref{thm1} to show that $f\left(0\right)<0$ and $f\left(m_\lambda\right)<0$, thus deriving $\sup_{m\in \bar{M}}S_{p,k}\left(m,x_*\right)=\sup_{m\in \bar{M}}f\left(m\right)<0$.

	A direct computation shows that
	\begin{align*}
	f^\prime\left(m\right)&=-\frac{m}{1-m^2}-\frac{\lambda k m^{k-1} }{p}\sqrt{\frac{2\left(p-1\right)}{p}}y_*-\frac{\lambda^2\left(k-1\right)m^{2k-3}}{p}+\frac{\left(p-1\right)\lambda^2km^{2k-1}}{p^2}\\
	&=-\frac{m}{1-m^2}f_1\left(m\right),
	\end{align*}
	where 
	\begin{align*}
	f_1\left(m\right)=1+\frac{\lambda k m^{k-2}\left(1-m^2\right) }{p}\sqrt{\frac{2\left(p-1\right)}{p}}y_*+\frac{\lambda^2\left(k-1\right)m^{2k-4}\left(1-m^2\right)}{p}-\frac{\left(p-1\right)\lambda^2km^{2k-2}\left(1-m^2\right)}{p^2}.
	\end{align*}
	Let $u:=u\left(\lambda,m\right):=\lambda m^{k-2}\left(1-m^2\right)$.
	
	Case I: If $k\leq p$,
	\begin{align*}
	f_1\left(m\right)&=1+\frac{k}{p}\sqrt{\frac{2\left(p-1\right)}{p}}y_*u+\frac{k-1}{p}u^2+\frac{\lambda^2m^{2k-2}\left(1-m^2\right)}{p^2}(k-p)\leq f_2\left(u\right)
	\end{align*}
	where 
	\begin{align*}
	f_2\left(u\right)=1+\frac{k}{p}\sqrt{\frac{2\left(p-1\right)}{p}}y_*u+\frac{k-1}{p}u^2.
	\end{align*}
	The larger zero of $f_2$ is
	\begin{align*}
	u_*:=\frac{1}{\left(k-1\right)\sqrt{2p}}\left(-k\sqrt{p-1}y_*+\sqrt{k^2\left(p-1\right)y_*^2-2\left(k-1\right)p^2}\right).
	\end{align*}
	Recall that for fixed $\lambda$, $u=\lambda m^{k-2}\left(1-m^2\right)$, which is increasing with respect to $m$ over $\left[0,\sqrt{\frac{k-2}{k}}\right]$, so when $\lambda\geq p\sqrt{\frac{p-2}{p-1}}\left(\frac{k}{k-2}\right)^\frac{k}{2}$,
	\begin{equation*}
	u_{\max}=u\left(m_\lambda\right)=\lambda^{\frac{2}{k}}\left(\frac{p\sqrt{p-2}}{\sqrt{p-1}}\right)^{\frac{k-2}{k}}\left(1-m_\lambda^2\right).
	\end{equation*}
	Therefore, 
	\begin{align}
	u_{\max}\sim \lambda^{\frac{2}{k}}\left(\frac{p\sqrt{p-2}}{\sqrt{p-1}}\right)^{\frac{k-2}{k}}\text{        as  }\lambda\to \infty.\label{u-max}
	\end{align}
	Note that 
	\begin{align*}
	y_*=\sqrt{\frac{p}{2\left(p-1\right)}}\left(x_*-\lambda m_*^k\left(\frac{1}{p}-\frac{1}{k}\right)\right)=-\frac{\lambda m_*^k}{\sqrt{2p\left(p-1\right)}}-p\sqrt{\frac{1-m_*^2}{2\left(p-1\right)}}.
	\end{align*}
	When $\lambda\to \infty$, it is observed from Eq.$\left(\ref{eq:minima}\right)$ that $\lim_{\lambda\to \infty}m_*\left(\lambda\right)=1$,
	so \begin{align*}
	y_*\sim -\frac{\lambda}{\sqrt{2p\left(p-1\right)}}\text{       as  }\lambda\to \infty,
	\end{align*}
	and thus
	\begin{align}
	u_*\sim -\frac{\lambda k}{p\left(k-1\right)}\text{       as  }\lambda\to \infty.\label{ustar}
	\end{align}
	Combining Eq.$\left(\ref{u-max}\right)$ and $\left(\ref{ustar}\right)$, there exists $\tilde{\lambda}_c>0$ such that if $\lambda\geq \tilde{\lambda}_c$, , $u_{\max}<u_*$, so $f_1\left(m\right)$ crosses $m$-axis at most once over $\left[0,m_\lambda\right]$. Note that $f_1\left(0\right)=1>0$ and it is continuous on $\left[0,m_\lambda\right]$, so $f^\prime\left(m\right)<0$ when $m$ is small and it crosses $m$-axis at most once over $\left[0,m_\lambda\right]$. 
	
	Case II: If $k>p$, then when $\lambda\geq 2^k\left(p-2\right)\sqrt{\frac{p}{p-1}}$, $m_\lambda\leq \frac{1}{2}$. Therefore for any $m\leq m_\lambda$,
	\begin{align*}
	\lambda m^{2k-2}\left(1-m^2\right)\leq u^2,
	\end{align*}
	then we have $f_1\left(m\right)\leq f_3\left(u\right)$, where
	\begin{align*}
	f_3\left(u\right)=1+\frac{k}{p}\sqrt{\frac{2\left(p-1\right)}{p}}y_*u+\left(\frac{k\left(p+1\right)-2p}{p^2}\right)u^2.
	\end{align*}
	The same argument in Case I also applies to Case II and we derive the same conclusion that $f^\prime\left(m\right)<0$ when $m$ is small and it crosses $m$-axis at most once over $\left[0,m_\lambda\right]$. 
	
	This implies 
	\begin{align}
	\sup_{m\in \bar{M}}f\left(m\right)=\max\left\{f\left(0\right), f\left(m_{\lambda}\right)\right\}.\label{GS-l-max}
	\end{align}
	Note that 
	$f\left(0\right)=\Phi_p\left(y_*\right)$, where $\Phi_p\left(\cdot\right)$ is the annealed complexity of  the p-spin spherical spin glass model, see Theorem 2.8 in \cite{ABC13}. It is known that $\Phi_p\left(\cdot\right)$ is an increasing function on $\left(-\infty,-2\sqrt{\frac{p-1}{p}}\right)$ and $\lim_{y\to -\infty}\Phi_p\left(y\right)=-\infty$, so when $\lambda$ is large enough so that $y_*$ is smaller than  the limiting ground state energy of the p-spin spherical spin glass model which is the unique zero of  $\Phi_p\left(\cdot\right)$ on $\left(-\infty, -2\sqrt{\frac{p-1}{p}}\right)$,
	\begin{equation}
	f\left(0\right)=\Phi_p\left(y_*\right)<0.\label{GS-1-pure}
	\end{equation} 
	
	As to $f\left(m_\lambda\right)$,  we know from Theorem \ref{thm1}  (more specifically, Eq. $\left(\ref{high-l-max}\right)$) that when $\lambda\geq \tilde{\lambda}$,
	\[f\left(m_\lambda\right)\leq \sup_{m\geq m_{\lambda}}f\left(m\right)\leq 0.\] Combining this with Eq.(\ref{GS-l-max}) and (\ref{GS-1-pure}) we prove this proposition.
	
\end{proof}
\begin{proof}[Proof of Proposition \ref{Prop_LB}]
	For any $\epsilon>0$, let $M=\left[0,1\right]$ and $E=\left(-\infty,x_*-\epsilon\right)$. It is shown in Theorem \ref{thm1} that for fixed $m<1$, $\tilde{S}_{p,k}\left(m, \cdot\right)$ is increasing on $\left(-\infty, -x_*-\epsilon\right)$. Combining this with Proposition \ref{prop.low-latitude-complexity} we see that 
	\begin{align*}
\limsup_{N\to \infty}\frac{1}{N}\log\mathbb{E}\left[Crt_N\left(M,E\right)\right]&= \sup_{m\in \bar{M},x\in \bar{E}}S_{p,k}\left(m,x\right)\\
&\leq \sup_{m\in \bar{M}}\tilde{S}_{p,k}\left(m,y_*\right)\\
&\leq \max\{\sup_{m\in \left[0,m_\lambda\right]}\tilde{S}_{p,k}\left(m,y_*\right), \sup_{m\in \left[m_\lambda, 1\right]}\tilde{S}_{p,k}\left(m,y_*\right)\}\\
&<0.
\end{align*}
	Therefore, by Markov inequality,
	\[P\left(\frac{1}{N}\min_{\sigma \in S^{N-1}\left(\sqrt{N}\right)} H_{N}(\sigma)\leq x_*-\epsilon\right)\leq P\left(Crt_{N}\left(M,E\right)\geq 1\right)\leq \mathbb{E}\left[Crt_N\left(M,E\right)\right],\]
    then Eq.$\left(\ref{LB}\right)$ follows from Borel-Cantali lemma.

\end{proof}

\appendix
\section{Covariance computations and some formulas from Random Matrix Theory}
In this appendix we derive the random matrices appearing in the Kac-Rice computation in Section \ref{sec.asymptotic} and summarize a series of tools that we use in random matrix theory.

 \begin{lemma}\label{lem_1st dist}
 	Let $f: S^{N-1}\to \mathbb{R}$ be defined in Eq.$(\ref{res H})$. Without loss of generality, we set $\sigma=e_N$, $\hat{\textbf{v}}_0=me_N+\sqrt{1-m^2}e_{N-1}$,  then 

\[\mathbb{E}\left[f\left(\sigma\right)\right]=-\lambda\sqrt{N}m^k/k, Var\left(f\left(\sigma\right)\right)=1.\]
\[\mathbb{E}\left[\nabla f\left(\sigma\right)\right]=-\sqrt{N}\lambda m^{k-1}\sqrt{1-m^2}e_{N-1}\]
\[Cov\left(f\left(\sigma\right),\nabla_if\left(\sigma\right)\right)=Cov\left(\nabla^2_{jk}f\left(\sigma\right),\nabla_if\left(\sigma\right)\right)=0\text{ for }i,j,k=1,2,\dots, N-1.\]
\[Cov\left(\nabla^2f,f\right)=-pI_{N-1}\]
\[Cov\left(\nabla f,\nabla f\right)=pI_{N-1}\]
\[Cov\left(\nabla^2_{ij} f,\nabla^2_{kl} f\right)=p\left(p-1\right)\left(\delta_{ik}\delta_{jl}+\delta_{il}\delta_{jk}\right)+p^2\delta_{ij}\delta_{kl} \text{  for }i,j,k,l=1,2,\dots, N-1.\]
Denote by $\mathbb{E}_{A}$ and $Cov_{A}$ the expectation and covariance  conditional on the event $A$, then
\[\mathbb{E}_{\nabla f\left(\sigma\right)=0}\left[f\left(\sigma\right)\right]=\mathbb{E}\left[f\left(\sigma\right)\right],\]
\[\mathbb{E}_{\nabla f\left(\sigma\right)=0}\left[\nabla^2f\left(\sigma\right)\right]=\mathbb{E}\left[\nabla^2f\left(\sigma\right)\right],\]
\[\mathbb{E}\left[\nabla^2f\left(\sigma\right)\right]=-\sqrt{N}\lambda\left(k-1\right)\left(1-m^2\right)m^{k-2}e_{N-1}e^T_{N-1}+\sqrt{N}\lambda m^k I_{N-1}\]
\[\mathbb{E}_{f=\sqrt{N}x}\left[\nabla^2 f\left(\sigma\right)\right]=-\lambda\sqrt{N}\left(k-1\right)m^{k-2}\left(1-m^2\right)e_{N-1}e^T_{N-1}-pI_{N-1}\left(\sqrt{N}x+\frac{\lambda\sqrt{N}m^k}{k}\right)+\lambda\sqrt{N}m^k I_{N-1}\]
\[Cov_{f=\sqrt{N}x}\left(\nabla^2_{ij}f\left(\sigma\right),\nabla^2_{kl}f\left(\sigma\right)\right)=p\left(p-1\right)\left(\delta_{ik}\delta_{jl}+\delta_{il}\delta_{jk}\right)\text{  for }i,j,k,l=1,2,\dots, N-1.\]	
\end{lemma}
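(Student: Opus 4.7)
The plan is to decompose $f = X + g$ on the unit sphere, where $X(\sigma) = -\sum_{1\leq i_1,\ldots,i_p\leq N} J_{i_1,\ldots,i_p}\sigma_{i_1}\cdots\sigma_{i_p}$ is the centered Gaussian part and $g(\sigma) = -\tfrac{\lambda\sqrt{N}}{k}\langle\sigma,\hat{\textbf{v}}_0\rangle^k$ is the deterministic signal part. The mean formulas $\mathbb{E}[f(\sigma)] = g(e_N) = -\lambda\sqrt{N}m^k/k$, $\mathbb{E}[\nabla f] = \nabla g(e_N)$, and $\mathbb{E}[\nabla^2 f] = \nabla^2 g(e_N)$ thus reduce to spherical-differential computations on the deterministic function $g$, while the variance and covariance formulas depend only on $X$.

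For the Gaussian part, I would use that $\mathbb{E}[X(\sigma)X(\sigma')] = \langle\sigma,\sigma'\rangle^p =: R(\sigma,\sigma')$ and compute all the covariances by differentiating $R$ in each argument and evaluating at $\sigma = \sigma' = e_N$. A direct calculation of the first-, second-, third-, and fourth-order mixed partials of $R$ yields, after restricting indices to the tangent basis $e_1,\ldots,e_{N-1}$ (all other terms involve factors $\delta_{\cdot N}$ coming from $\sigma_N = 1$ and drop out after tangent projection),
$$\mathrm{Cov}(\bar\nabla_i X,\bar\nabla_j X) = p\delta_{ij}, \qquad \mathrm{Cov}(\bar\nabla^2_{ij}X,\bar\nabla^2_{kl}X) = p(p-1)(\delta_{ik}\delta_{jl}+\delta_{il}\delta_{jk}),$$
while all odd (third-order) cross terms vanish, giving $\mathrm{Cov}(X,\nabla f) = 0$ and $\mathrm{Cov}(\nabla^2 f,\nabla f) = 0$.

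To pass from ambient to spherical derivatives, I would use the identity $\nabla^2_S h(\sigma) = P_\sigma\bar\nabla^2 h(\sigma)P_\sigma - \langle\sigma,\bar\nabla h(\sigma)\rangle\, I_{T_\sigma S^{N-1}}$, valid for any smooth extension $h$ to $\mathbb{R}^N$. Since $X$ is $p$-homogeneous, Euler's relation $\langle\sigma,\bar\nabla X(\sigma)\rangle = pX(\sigma)$ gives $\nabla^2_S X_{ij} = \bar\nabla^2_{ij}X - pX\delta_{ij}$ in the tangent frame; the $-pX\delta_{ij}$ correction produces the $-p\delta_{ij}$ in $\mathrm{Cov}(\nabla^2 f,f)$ and the extra $p^2\delta_{ij}\delta_{kl}$ in $\mathrm{Cov}(\nabla^2_{ij}f,\nabla^2_{kl}f)$. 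Applying the same formula to $g$ at $\sigma = e_N$, where $\bar\nabla g(e_N) = -\lambda\sqrt{N}m^{k-1}\hat{\textbf{v}}_0$ projects to $-\lambda\sqrt{N}m^{k-1}\sqrt{1-m^2}\,e_{N-1}$, and where $\bar\nabla^2 g(e_N) = -\lambda\sqrt{N}(k-1)m^{k-2}\hat{\textbf{v}}_0\hat{\textbf{v}}_0^T$ combined with $\langle e_N,\bar\nabla g\rangle = -\lambda\sqrt{N}m^k$ produces
$$\mathbb{E}[\nabla^2 f] = -\lambda\sqrt{N}(k-1)(1-m^2)m^{k-2}\,e_{N-1}e_{N-1}^T + \lambda\sqrt{N}m^k\,I_{N-1}.$$

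Finally, the conditional formulas follow from Gaussian conditioning: $\mathbb{E}[Y\mid Z=z] = \mathbb{E}[Y] + \mathrm{Cov}(Y,Z)\mathrm{Var}(Z)^{-1}(z-\mathbb{E}[Z])$ and the analogous rank-one conditional-covariance identity. Conditioning on $\nabla f = 0$ changes nothing because $\mathrm{Cov}(f,\nabla f) = \mathrm{Cov}(\nabla^2 f,\nabla f) = 0$; conditioning on $f = \sqrt{N}x$ shifts $\nabla^2 f$ by $\mathrm{Cov}(\nabla^2 f,f)(\sqrt{N}x - \mathbb{E}[f]) = -p I_{N-1}(\sqrt{N}x + \lambda\sqrt{N}m^k/k)$ and removes the $p^2\delta_{ij}\delta_{kl}$ term from the Hessian covariance, leaving $p(p-1)(\delta_{ik}\delta_{jl}+\delta_{il}\delta_{jk})$. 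The main bookkeeping obstacle is keeping the spherical-vs-ambient Hessian correction straight simultaneously for $X$ (where Euler's relation produces the $pX\delta_{ij}$ shift) and for $g$ (where $\langle e_N,\bar\nabla g\rangle$ contributes the $\lambda\sqrt{N}m^k I_{N-1}$ piece), and organizing the tangent-frame indexing so that the $e_{N-1}$-direction—the projected signal direction—produces the rank-one structure rather than being absorbed into the identity.
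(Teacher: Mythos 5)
Your proposal is correct and follows the standard Kac--Rice covariance computation; the paper itself states Lemma~\ref{lem_1st dist} without giving an explicit proof, so there is no written argument to compare against, but your route is the canonical one (cf.\ the analogous computations in \cite{AB13,ABC13,BMMN18}). Specifically: the decomposition $f = X + g$ with $\mathbb{E}[X(\sigma)X(\sigma')] = (\sigma\cdot\sigma')^p$ correctly reduces all covariances to mixed partials of $(\sigma\cdot\sigma')^p$ evaluated at $\sigma=\sigma'=e_N$ and restricted to tangent indices $1,\dots,N-1$, where every surviving fourth-order term is $p(p-1)(\delta_{ik}\delta_{jl}+\delta_{il}\delta_{jk})$, and all odd mixed partials vanish by the $\delta_{\cdot N}$ bookkeeping you describe; the ambient-to-spherical Hessian correction $\nabla^2_S h = P_\sigma\bar\nabla^2 h\,P_\sigma - \langle\sigma,\bar\nabla h\rangle I_{T_\sigma}$ combined with Euler's relation $\langle\sigma,\bar\nabla X\rangle = pX$ correctly produces the $-p\delta_{ij}$ in $\mathrm{Cov}(\nabla^2 f, f)$ and the extra $p^2\delta_{ij}\delta_{kl}$ in the unconditional Hessian covariance; the projected signal terms ($P\hat{\textbf{v}}_0 = \sqrt{1-m^2}\,e_{N-1}$ and $\langle e_N,\bar\nabla g\rangle = -\lambda\sqrt{N}m^k$) give the stated rank-one-plus-identity mean Hessian; and the Gaussian conditioning identities correctly reproduce the shift by $-pI_{N-1}(\sqrt{N}x + \lambda\sqrt{N}m^k/k)$ and the cancellation of $p^2\delta_{ij}\delta_{kl}$ in the conditional covariance. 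Nothing is missing.
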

From Lemma \ref{lem_1st dist}, conditional on $\nabla f\left(\sigma\right)=0$, $f\left(\sigma\right)=\sqrt{N}x$, 
\begin{align}
\nabla^2f\left(\sigma\right)\overset{d}{=}&\sqrt{2\left(N-1\right)p\left(p-1\right)}W_{N-1}-\lambda\sqrt{N}\left(k-1\right)m^{k-2}\left(1-m^2\right)e_{N-1}e^T_{N-1}\nonumber\\
&+\sqrt{N}I_{N-1}\left(-px+\left(1-\frac{p}{k}\right)\lambda m^k\right).
\end{align}

\begin{definition}\label{def_H}	
	For $N\in \mathbb{N}$, denote
	\begin{itemize}
		\item Hermite polynomials $h_N\left(x\right)=e^{x^2}\left(-\frac{d}{dx}\right)^Ne^{-x^2}.$
		\item Hermite functions $\phi_N\left(x\right)=\left(2^NN!\sqrt{\pi}\right)^{-\frac{1}{2}}h_N\left(x\right)e^{-\frac{x^2}{2}}$.
	\end{itemize}
\end{definition}
\begin{lemma}[Lemma 3 in \cite{AB13}, Corollary 11.6.3 in \cite{AT2007}]\label{lem_det hermite}
	\[\mathbb{E}\left[\det\left(W_{N-1}-xI_{N-1}\right)\right]=2^{1-N}\left(N-1\right)^{\frac{1-N}{2}}\left(-1\right)^{N-1}h_{N-1}\left(\sqrt{N-1}x\right).\]
	
\end{lemma}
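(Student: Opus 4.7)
The plan is to establish this classical identity by reducing to an unnormalized GOE and deriving a three-term recurrence for the expected characteristic polynomial that matches the Hermite recurrence. First I would write $\tilde W_n := \sqrt{n}\, W_n$, so that $\tilde W_n$ is a real symmetric matrix whose upper-triangular entries are independent centered Gaussians with variance $1$ off-diagonal and $2$ on-diagonal. By multilinearity of the determinant,
\[
\mathbb{E}\bigl[\det(W_n - xI_n)\bigr] \;=\; (-1)^n\, n^{-n/2}\,\mathbb{E}\bigl[\det(\sqrt{n}\,x\,I_n - \tilde W_n)\bigr],
\]
which reduces the task to computing $p_n(t) := \mathbb{E}[\det(tI_n - \tilde W_n)]$ at $t = \sqrt{n}\,x$.

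The core step is to establish the recurrence
\[
p_n(t) \;=\; t\,p_{n-1}(t) - p_{n-1}'(t),\qquad p_0(t)=1,\quad p_1(t)=t.
\]
I would decompose
\[
\tilde W_n = \begin{pmatrix} a & b^T \\ b & \tilde W'_{n-1} \end{pmatrix},
\]
where $a\sim N(0,2)$, $b\sim N(0,I_{n-1})$, and $\tilde W'_{n-1}$ is an independent unnormalized GOE of size $n-1$, and then use block cofactor expansion to write
\[
\det(tI_n-\tilde W_n) \;=\; (t-a)\det(tI_{n-1}-\tilde W'_{n-1}) \;-\; b^T\,\mathrm{adj}(tI_{n-1}-\tilde W'_{n-1})\,b.
\]
Conditioning on $\tilde W'_{n-1}$, exploiting $\mathbb{E}[a]=0$ and the identity $\mathbb{E}[b^T M b]=\mathrm{tr}(M)$ for $b\sim N(0,I)$, and invoking the Jacobi-type formula $\mathrm{tr}\bigl(\mathrm{adj}(tI-A)\bigr) = \partial_t\det(tI-A)$ yields the recurrence after the outer expectation in $\tilde W'_{n-1}$. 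The recurrence together with its initial conditions forces $p_n$ to be the $n$-th probabilist's Hermite polynomial $\mathrm{He}_n$.

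To finish, I would convert between the probabilist and physicist conventions via the identity $\mathrm{He}_n(t) = 2^{-n/2}\,h_n(t/\sqrt{2})$, where $h_n$ is the physicist's Hermite polynomial used in the paper, substitute $t=\sqrt{n}\,x$, and collect the powers of $2$, $n$, and $(-1)$ coming from the rescaling to produce the stated form with $n = N-1$. The main obstacle is purely clerical: reconciling the two Hermite conventions and tracking the normalization between $W_n$ and $\tilde W_n$. The conceptual content is light, concentrated entirely in the recurrence, which is a one-line block-matrix expansion plus the standard Gaussian quadratic-form identity. A conceptually distinct alternative that I would keep in reserve is to integrate $\det(tI-\tilde W_n) = \prod_i (t-\lambda_i)$ against the $\beta=1$ joint eigenvalue density of the GOE and appeal to orthogonality of Hermite polynomials under the Gaussian weight; this explains more cleanly \emph{why} Hermite polynomials appear, at the price of invoking additional random matrix machinery.
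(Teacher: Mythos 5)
Your strategy -- reducing to an unnormalized GOE, deriving the three-term recurrence $p_n(t)=tp_{n-1}(t)-p_{n-1}'(t)$ via the block identity $\det\left(\begin{smallmatrix}\alpha & v^{T}\\ w & B\end{smallmatrix}\right)=\alpha\det B - v^{T}\operatorname{adj}(B)\,w$ together with $\mathbb{E}[b^{T}Mb]=\operatorname{tr} M$ and Jacobi's formula, and then identifying $p_n$ with the probabilist's Hermite polynomial $\mathrm{He}_n$ -- is the standard derivation of this classical identity and is correct. The paper itself offers no proof, only a citation, so your argument genuinely fills the gap.

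However, the step you dismiss as ``purely clerical'' does not actually close: the rescaling constant must be checked, and with the constant you chose the calculation does not reproduce the stated formula. With the paper's declared variance $\mathbb{E}[W_{ij}^{2}]=\tfrac{1}{N-1}$ for $i<j$, your $\tilde{W}_{N-1}=\sqrt{N-1}\,W_{N-1}$ has unit off-diagonal variance, so your recurrence correctly gives $\mathbb{E}\bigl[\det\bigl(tI-\tilde{W}_{N-1}\bigr)\bigr]=\mathrm{He}_{N-1}(t)$, and hence
\[
\mathbb{E}\bigl[\det\bigl(W_{N-1}-xI_{N-1}\bigr)\bigr]=(-1)^{N-1}\bigl(2(N-1)\bigr)^{-\frac{N-1}{2}}h_{N-1}\Bigl(\sqrt{\tfrac{N-1}{2}}\,x\Bigr),
\]
which disagrees with the claimed formula both in the power of $2$ and in the argument of $h_{N-1}$ (check $N=3$: the left side is $x^{2}-\tfrac12$, the lemma claims $x^{2}-\tfrac14$). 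The source of the mismatch is that the references \cite{AB13,ABC13} define the GOE with $\mathbb{E}[W_{ij}^{2}]=\frac{1+\delta_{ij}}{2N}$, i.e.\ \emph{half} the variance the present paper declares; this smaller variance is also the one forced by the paper's own conditional covariance $\mathrm{Cov}_{f=\sqrt{N}x}(\nabla^{2}_{ij}f,\nabla^{2}_{kl}f)=p(p-1)(\delta_{ik}\delta_{jl}+\delta_{il}\delta_{jk})$ if the representation $\sqrt{2(N-1)p(p-1)}\,W_{N-1}$ is to hold. With the corrected variance you should set $\tilde{W}_{N-1}=\sqrt{2(N-1)}\,W_{N-1}$; then the same recurrence yields $(-1)^{N-1}\bigl(2(N-1)\bigr)^{-\frac{N-1}{2}}\mathrm{He}_{N-1}\bigl(\sqrt{2(N-1)}\,x\bigr)$, and the conversion $\mathrm{He}_n(t)=2^{-n/2}h_n(t/\sqrt{2})$ gives exactly $2^{1-N}(N-1)^{\frac{1-N}{2}}(-1)^{N-1}h_{N-1}\bigl(\sqrt{N-1}\,x\bigr)$. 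So the idea and the recurrence are right; the one place you took on faith is precisely the place where the constants bite.
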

Using Eq.$\left(1.8\right)$ in \cite{DesLiu15}, we obtain the following proposition which is useful for expressing determinants in terms of Hermite polynomials.
\begin{lemma}\label{lem char}
\begin{align}
&\mathbb{E}\left[\det\left(W_{N-1}-fe_{N-1}e^T_{N-1}+sI_{N-1}\right)\right]\nonumber\\
&=\left(\frac{-i}{\sqrt{N-1}}\right)^{N-1}\pi^{-\frac{1}{2}}e^{\left(N-1\right)s^2}\int_{\mathbb{R}}e^{-y^2}\left(y^{N-1}-i\sqrt{N-1}fy^{N-2}\right)e^{2\sqrt{N-1}iys}dy.
\end{align}
\end{lemma}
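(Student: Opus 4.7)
The plan is to reduce the expectation of the perturbed determinant to the unperturbed case treated in Lemma \ref{lem_det hermite}, and then replace the resulting Hermite polynomials by their Fourier-type integral representations (this is what Eq.~(1.8) of \cite{DesLiu15} provides, and it is exactly the shape in which the integrand $y^{N-1}-i\sqrt{N-1}fy^{N-2}$ appears on the right-hand side).

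First, I apply the matrix determinant lemma to the rank-one perturbation: for any symmetric matrix $A$,
\[
\det(A-fe_{N-1}e_{N-1}^{T})=\det(A)-f\,C_{N-1,N-1}(A),
\]
where $C_{N-1,N-1}(A)=\det(A^{(N-1)})$ is the $(N-1,N-1)$-cofactor, i.e.\ the determinant of the $(N-2)\times(N-2)$ principal submatrix obtained by deleting the last row and column. Applying this with $A=W_{N-1}+sI_{N-1}$ and taking expectations gives
\[
\mathbb{E}\!\left[\det(W_{N-1}-fe_{N-1}e_{N-1}^{T}+sI_{N-1})\right]=\mathbb{E}\!\left[\det(W_{N-1}+sI_{N-1})\right]-f\,\mathbb{E}\!\left[\det(W'_{N-2}+sI_{N-2})\right],
\]
where $W'_{N-2}$ is the $(N-2)\times(N-2)$ principal submatrix of $W_{N-1}$. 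By the GOE covariance structure, $W'_{N-2}\stackrel{d}{=}\sqrt{(N-2)/(N-1)}\,\widetilde W_{N-2}$ with $\widetilde W_{N-2}$ a standard GOE of size $N-2$, so after pulling out the scalar one obtains an unshifted GOE expectation to which Lemma~\ref{lem_det hermite} applies directly.

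Next, I invoke Lemma~\ref{lem_det hermite} for both pieces, yielding Hermite polynomials: $\mathbb{E}[\det(W_{N-1}+sI_{N-1})]\propto h_{N-1}(-\sqrt{N-1}\,s)$ and $\mathbb{E}[\det(W'_{N-2}+sI_{N-2})]\propto h_{N-2}(-\sqrt{N-1}\,s)$, with explicit prefactors that I track carefully. I then substitute the standard integral representation
\[
h_n(x)=\frac{(-2i)^n e^{x^2}}{\sqrt{\pi}}\int_{\mathbb{R}}t^{n}e^{-t^2+2ixt}\,dt
\]
(the content of Eq.~(1.8) in \cite{DesLiu15}) into each piece, after the change of variables $y=-t$ to produce the factor $e^{+2i\sqrt{N-1}\,sy}$ that appears in the statement. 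At that point both terms share the common prefactor $\left(\tfrac{-i}{\sqrt{N-1}}\right)^{N-1}\pi^{-1/2}e^{(N-1)s^2}$, and the remaining factor relating the two is exactly $\tfrac{\sqrt{N-1}}{-i}=i\sqrt{N-1}$, so the two integrals combine into a single integral with integrand $\bigl(y^{N-1}-i\sqrt{N-1}\,f y^{N-2}\bigr)e^{-y^{2}+2i\sqrt{N-1}\,sy}$, matching the claimed identity.

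I do not expect any conceptual obstacle here; the only delicate part is the bookkeeping of the constants. Specifically, one has to verify that the combination of the $(-1)^{N-1}$ and $(-1)^{N-2}$ signs coming from Lemma~\ref{lem_det hermite}, the $(-2i)^n$ and $(2i)^n$ factors arising after the $y=-t$ substitution, the powers of $2$, and the rescalings $\sqrt{(N-2)/(N-1)}$ all conspire to give the uniform prefactor $\left(\tfrac{-i}{\sqrt{N-1}}\right)^{N-1}$ and the precise coefficient $-i\sqrt{N-1}\,f$ in front of $y^{N-2}$. This is routine but is the one place where a sign or factor-of-$i$ error would spoil the identity.
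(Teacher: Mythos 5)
Your proof is correct, and it takes a genuinely different and more self-contained route than the paper, which simply invokes Eq.~(1.8) of \cite{DesLiu15} as a black box and gives no further argument. You instead work from first principles: the cofactor (matrix determinant) expansion
$\det(A-fe_{N-1}e_{N-1}^{T})=\det(A)-f\det(A^{(N-1)})$
reduces the rank-one-perturbed determinant to two unperturbed GOE determinants of sizes $N-1$ and $N-2$; the rescaling $W'_{N-2}\stackrel{d}{=}\sqrt{(N-2)/(N-1)}\,\widetilde W_{N-2}$ (correct, since the off-diagonal variance of $W_{N-1}$ is $\tfrac1{N-1}$) brings the submatrix to a standard GOE; Lemma~\ref{lem_det hermite} converts both pieces to Hermite polynomials evaluated at $-\sqrt{N-1}\,s$; and the classical Fourier representation $h_n(x)=\tfrac{(-2i)^n e^{x^2}}{\sqrt{\pi}}\int t^{n}e^{-t^2+2ixt}\,dt$ followed by $y=-t$ lands you on the claimed integral. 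I checked the constants: for the leading piece $2^{1-N}(N-1)^{\frac{1-N}{2}}(-1)^{N-1}(2i)^{N-1}=(-i/\sqrt{N-1})^{N-1}$, and for the second piece the relative factor is $(-i/\sqrt{N-1})^{-1}=i\sqrt{N-1}$, so $-f\cdot i\sqrt{N-1}=-i\sqrt{N-1}f$ is exactly the coefficient of $y^{N-2}$ in the statement. A pleasant byproduct of your route is that the first step already establishes Lemma~\ref{lem GOE shift+rank1} directly, whereas the paper derives that lemma by combining Lemma~\ref{lem_det hermite} with Lemma~\ref{lem char}; in your logic the dependence runs the other way and no external integral formula is needed beyond the standard one for $h_n$.
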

\begin{remark}
	Setting $f=0$, one can easily recover Lemma \ref{lem_det hermite} using Fourier transform.
\end{remark}
\begin{lemma}\label{lem GOE shift+rank1}
\begin{align*}
&\mathbb{E}\left[\det\left(W_{N-1}-fe_{N-1}e^T_{N-1}+sI_{N-1}\right)\right]\\
&=\mathbb{E}\left[\det\left(W_{N-1}+sI_{N-1}\right)\right]-f\left(\frac{N-2}{N-1}\right)^{\frac{N-2}{2}}\mathbb{E}\left[\det\left(W_{N-2}+\sqrt{\frac{N-1}{N-2}}sI_{N-2}\right)\right]
\end{align*}
\end{lemma}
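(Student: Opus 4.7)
The identity is a clean matrix-determinant-lemma computation combined with a rescaling. First I would apply the rank-one update formula for determinants pointwise in $W_{N-1}$: since $\sigma \mapsto \det(A + \sigma u v^T)$ is affine in $\sigma$, we have for any matrix $A$ and any vector $u$
\[
\det(A - f\, e_{N-1} e_{N-1}^T) \;=\; \det(A) \;-\; f\, C_{N-1,N-1}(A),
\]
where $C_{N-1,N-1}(A)$ denotes the $(N-1,N-1)$ cofactor, i.e.\ the determinant of the $(N-2)\times(N-2)$ principal submatrix obtained by deleting the last row and column. Crucially this holds for every realization of $A$, with no invertibility hypothesis. Taking $A = W_{N-1} + s I_{N-1}$ gives the pointwise identity
\[
\det(W_{N-1} - f e_{N-1} e_{N-1}^T + sI_{N-1}) \;=\; \det(W_{N-1} + sI_{N-1}) \;-\; f\,\det\bigl(\widetilde{W} + s I_{N-2}\bigr),
\]
where $\widetilde{W}$ is the top-left $(N-2)\times(N-2)$ principal submatrix of $W_{N-1}$.

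Second, I would identify the distribution of $\widetilde{W}$. By construction of the normalized GOE in the definition at the start of Section~\ref{sec.asymptotic}, the entries of $W_{N-1}$ satisfy $\mathbb{E}[W_{ij}^2] = 1/(N-1)$ off-diagonal and $2/(N-1)$ on the diagonal, so
\[
\widetilde{W} \;\overset{d}{=}\; \sqrt{\tfrac{N-2}{N-1}}\, W_{N-2},
\]
since the right-hand side has matching off-diagonal variance $\tfrac{N-2}{N-1}\cdot\tfrac{1}{N-2} = \tfrac{1}{N-1}$ and matching diagonal variance, with the same Gaussian independence structure. Pulling the scalar factor outside the determinant yields
\[
\det(\widetilde{W} + sI_{N-2}) \;\overset{d}{=}\; \Bigl(\tfrac{N-2}{N-1}\Bigr)^{\!(N-2)/2}\!\det\!\Bigl(W_{N-2} + \sqrt{\tfrac{N-1}{N-2}}\, s\, I_{N-2}\Bigr).
\]

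Finally, I would take expectations in the pointwise identity. Linearity of expectation and the distributional equality above produce exactly the claimed formula. There is no real obstacle here: the only thing to be careful about is the cofactor bookkeeping (the sign is correct because removing the last row and column of a matrix gives a $(+)$ cofactor, and the coefficient of $f$ in the expansion of $\det(A - f e_{N-1}e_{N-1}^T)$ along the last row or column comes with the factor $-f$ times the minor), and the variance accounting in Step~2 to get the right scaling factor. Both are direct.
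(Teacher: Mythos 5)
Your proof is correct, and it takes a genuinely different route from the paper. The paper's proof is the one-line instruction ``Combine Lemma~\ref{lem_det hermite} and~\ref{lem char}'': it starts from the Desrosiers--Liu integral representation of Lemma~\ref{lem char}, splits the integrand $y^{N-1}-i\sqrt{N-1}fy^{N-2}$ into two pieces, recognizes the first piece (via Lemma~\ref{lem_det hermite}) as $\mathbb{E}[\det(W_{N-1}+sI_{N-1})]$, and matches the second piece to $\mathbb{E}[\det(W_{N-2}+s'I_{N-2})]$ by choosing $s'=\sqrt{(N-1)/(N-2)}\,s$ so that $\sqrt{N-2}\,s'=\sqrt{N-1}\,s$ and $(N-2)s'^2=(N-1)s^2$; the prefactor $\left(\tfrac{N-2}{N-1}\right)^{(N-2)/2}$ drops out of comparing $(-i/\sqrt{N-1})^{N-1}\cdot(-i\sqrt{N-1})$ with $(-i/\sqrt{N-2})^{N-2}$. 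Your argument avoids the integral representation entirely: you apply the rank-one cofactor expansion pointwise in $W_{N-1}$ (with the cofactor sign $(-1)^{2(N-1)}=+1$ handled correctly), identify the $(N-2)\times(N-2)$ principal corner of the normalized $W_{N-1}$ as $\sqrt{\tfrac{N-2}{N-1}}\,W_{N-2}$ in distribution by matching entrywise variances, and pull the scalar out of the determinant before averaging. Your route is more elementary and self-contained; it also yields a stronger pointwise identity before taking expectations. The paper's route is less elementary but keeps everything in the Hermite-integral framework that is reused in the subsequent Plancherel--Rotach asymptotics.
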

\begin{proof}
Combine Lemma \ref{lem_det hermite} and \ref{lem char}.
\end{proof}
\begin{theorem}[Plancherel-Rotach asymptotics]\label{THM PR}
	There exists $\delta_0>0$ such that for any $\delta\in \left(0,\delta_0\right)$, we have uniformly in $x\in \left(-\infty,-\sqrt{2}-\delta\right)$, 
	\[\phi_{N}\left(\sqrt{N}x\right)=\left(-1\right)^{N-1}\frac{e^{-NI_1\left(-x\right)}}{\sqrt{4\pi\sqrt{2N}}}h\left(x\right)\left(1+\mathcal{O}\left(N^{-1}\right)\right).\]
	where
	\[h\left(x\right)=\left|\frac{x-\sqrt{2}}{x+\sqrt{2}}\right|^{\frac{1}{4}}+\left|\frac{x+\sqrt{2}}{x-\sqrt{2}}\right|^{\frac{1}{4}},\]
	and
	\[I_1\left(x\right)=\int_{\sqrt{2}}^{x}\sqrt{t^2-2}dt.\]
\end{theorem}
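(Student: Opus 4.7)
The plan is to prove this Plancherel--Rotach asymptotic by applying the steepest-descent method to the Cauchy integral representation of Hermite polynomials. Starting from $e^{2xt-t^{2}}=\sum_{N\geq 0}h_{N}(x)\,t^{N}/N!$ we have
\[
h_{N}(\sqrt{N}\,x)=\frac{N!}{2\pi i}\oint_{|t|=r}\frac{e^{2\sqrt{N}\,xt-t^{2}}}{t^{N+1}}\,dt,
\]
and the rescaling $t=\sqrt{N}\,s$ converts this into
\[
h_{N}(\sqrt{N}\,x)=\frac{N!\,N^{-N/2}}{2\pi i}\oint\frac{e^{N\Phi(s;x)}}{s}\,ds,\qquad \Phi(s;x):=2xs-s^{2}-\log s,
\]
where the integrand is holomorphic on $\mathbb{C}^{*}$ (the $\log$-branch is cosmetic, since $1/s^{N+1}$ is single-valued) with a pole of order $N+1$ at $s=0$.

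The saddle-point equation $\partial_{s}\Phi=2x-2s-1/s=0$ has the two roots $s_{\pm}=(x\pm\sqrt{x^{2}-2})/2$, both real and negative with $s_{-}<s_{+}<0$ for $x<-\sqrt{2}-\delta$. Using $s_{+}+s_{-}=x$ and $s_{+}s_{-}=1/2$ one gets $\Phi(s_{\pm})=xs_{\pm}+\tfrac{1}{2}-\log s_{\pm}$ and $\Phi''(s_{\pm})=-2+1/s_{\pm}^{2}$; in particular $\Phi''(s_{+})>0$, so the steepest-descent direction at $s_{+}$ is vertical. Using the antiderivative $\int\sqrt{t^{2}-2}\,dt=\tfrac{1}{2}t\sqrt{t^{2}-2}-\log(t+\sqrt{t^{2}-2})+C$, a direct calculation yields the key identity
\[
\mathrm{Re}\,\Phi(s_{+};x)-\tfrac{1}{2}x^{2}-\tfrac{1}{2}+\tfrac{1}{2}\log 2 = -I_{1}(-x),
\]
which produces the claimed $e^{-NI_{1}(-x)}$ after the Gaussian $e^{-Nx^{2}/2}$ built into $\phi_{N}$ is absorbed. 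I would then deform the original small circle into a contour that crosses the negative real axis only at $s_{+}$ (along the vertical steepest-descent direction) and otherwise lies in $\{\mathrm{Re}(s)>s_{+}-\varepsilon\}$; this deformation is homotopically legitimate in $\mathbb{C}^{*}$ and crucially \emph{avoids} the other saddle $s_{-}$, at which $\mathrm{Re}\,\Phi(s_{-})=\mathrm{Re}\,\Phi(s_{+})+2I_{1}(-x)$ is actually much larger. The necessary estimate $\mathrm{Re}\,\Phi(s)\leq\Phi(s_{+})-\eta$ uniformly off a neighborhood of $s_{+}$ on this contour follows from a direct level-set analysis once one has located $s_{\pm}$ as the only critical points. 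Laplace's approximation at $s_{+}$, combined with the algebraic identities $|x|+\sqrt{x^{2}-2}=\tfrac{1}{2}(\sqrt{|x|+\sqrt{2}}+\sqrt{|x|-\sqrt{2}})^{2}$ and $(|x|+\sqrt{x^{2}-2})(|x|-\sqrt{x^{2}-2})=2$, reduces the amplitude factor to
\[
\frac{1}{|s_{+}|\sqrt{|\Phi''(s_{+})|}}=\tfrac{1}{2}\Bigl(\bigl|\tfrac{x-\sqrt{2}}{x+\sqrt{2}}\bigr|^{1/4}+\bigl|\tfrac{x+\sqrt{2}}{x-\sqrt{2}}\bigr|^{1/4}\Bigr)=\tfrac{h(x)}{2},
\]
and combining with Stirling's expansion for $N!$ and the conversion factor $(2^{N}N!\sqrt{\pi})^{-1/2}e^{-Nx^{2}/2}$ from $h_{N}$ to $\phi_{N}$ gives exactly the stated formula, with the overall sign coming from the phase $e^{-iN\pi}$ produced by $e^{-N\log s_{+}}$.

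The main obstacle is \emph{uniformity} of the $O(N^{-1})$ remainder over the unbounded region $x<-\sqrt{2}-\delta$. At one extreme, as $x\to-\sqrt{2}$ the two saddles coalesce and $|\Phi''(s_{+})|\to 0$; the hypothesis $\delta>0$ enforces $|s_{+}-s_{-}|=\sqrt{x^{2}-2}\geq\sqrt{(\sqrt{2}+\delta)^{2}-2}>0$ and a uniform lower bound on $|\Phi''(s_{\pm})|$, keeping the isolated-saddle Gaussian approximation valid. At the other extreme, as $x\to-\infty$ one has $|s_{+}|\sim 1/|x|\to 0$ and $|s_{-}|\sim|x|\to\infty$, so the deformed contour must be chosen at scale comparable to $|s_{+}|$ near the saddle (the rescaling $u=s/|s_{+}|$ is natural), and a global Morse-type estimate for $\mathrm{Re}\,\Phi$ along the connecting arcs, uniform in $x$, yields the required uniform $(1+O(N^{-1}))$ error.
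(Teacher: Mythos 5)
The paper's ``proof'' of this theorem is a one-line citation to Lemma 7.1 of \cite{ABC13} and Lemma 5 of \cite{AB13}, which themselves rest on the classical Plancherel--Rotach asymptotics. You instead give a self-contained steepest-descent derivation starting from the Cauchy integral representation of $h_N$ obtained from the generating function. This is the standard route to Plancherel--Rotach, so the underlying substance is the same, but your version makes explicit the mechanisms (the saddles $s_\pm=(x\pm\sqrt{x^2-2})/2$, the identity $\Phi''(s_+)=2\sqrt{x^2-2}/|s_+|>0$, the reduction of the amplitude $1/(|s_+|\sqrt{\Phi''(s_+)})$ to $h(x)/2$, and the exponential identity tying $\mathrm{Re}\,\Phi(s_+)$ to $I_1(-x)$) that the paper leaves buried in a reference. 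The two algebraic identities you invoke for the amplitude are correct, as is the observation that $\mathrm{Re}\,\Phi(s_-)-\mathrm{Re}\,\Phi(s_+)=2I_1(-x)>0$, which justifies avoiding $s_-$.

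Two sign issues need repair. First, the $\log 2$ term in your key identity has the wrong sign: with $\mathrm{Re}\,\Phi(s_+)=xs_++\tfrac12-\log|s_+|$ and $I_1(-x)=-\tfrac{x}{2}\sqrt{x^2-2}-\log(-x+\sqrt{x^2-2})+\tfrac12\log 2$, one checks that
\[
\mathrm{Re}\,\Phi(s_+;x)-\tfrac12 x^2-\tfrac12-\tfrac12\log 2=-I_1(-x),
\]
since the factor $2^{-N/2}$ in $(2^NN!\sqrt{\pi})^{-1/2}$ \emph{subtracts} $\tfrac12\log 2$ from the exponent. Second, tracking all sign sources — the $(-1)^{N+1}$ from $s_+^{-(N+1)}$ (with $s_+<0$), the factor $-i$ from crossing the negative real axis in the $-i$ direction along a counterclockwise contour, and the $1/(2\pi i)$ — the net prefactor is $(-1)^N$, not $(-1)^{N-1}$. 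This is consistent with the parity check $\phi_N(\sqrt N x)=(-1)^N\phi_N(-\sqrt N x)$ with $\phi_N$ positive beyond its largest zero, so $(-1)^N$ is forced; the theorem's $(-1)^{N-1}$ looks like a transcription slip inherited from the references, and you should not claim to recover it verbatim (your own description ``the sign comes from $e^{-iN\pi}$'' produces $(-1)^N$, contradicting the formula you say you match).

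Finally, you correctly identify the uniformity over the unbounded range $x<-\sqrt2-\delta$ as the only part with real technical content, and you correctly diagnose the two problem regimes (coalescing saddles as $x\to-\sqrt2$, controlled by $\delta>0$; $s_+\to0$ colliding with the order-$(N+1)$ pole as $x\to-\infty$, requiring a contour at scale $|s_+|$). But the ``global Morse-type estimate'' and the near-saddle rescaling are left as a sketch; this is precisely where the work lies if one wants the uniform $(1+O(N^{-1}))$ error, and the proposal does not carry it out.
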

\begin{proof}
	This lemma is the same as Lemma 7.1 in \cite{ABC13} and Lemma 5 in \cite{AB13}.
\end{proof}
From Theorem \ref{THM PR} we derive the following lemma that we need in the proof of Theorem \ref{THM1}.
\begin{lemma}\label{lem_PR}
	There exists $\delta_0>0$ such that for any $\delta\in \left(0,\delta_0\right)$, we have uniformly in $y\in \left(-\infty,-\sqrt{2}-\delta\right)$, 
	\[\phi_{N-1}\left(\sqrt{N}y\right)=\left(-1\right)^{N-2}\frac{e^{-NI_1\left(-y\right)}}{\sqrt{2\pi\sqrt{2N}}}\frac{h\left(y\right)}{\sqrt{y^2-2}-y}\left(1+\mathcal{O}\left(N^{-1}\right)\right).\]
\end{lemma}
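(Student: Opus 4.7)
The plan is to deduce Lemma \ref{lem_PR} from Theorem \ref{THM PR} by rescaling the argument. Set $x = \sqrt{N/(N-1)}\,y$, so that $\sqrt{N-1}\,x = \sqrt{N}\,y$. For $y \in (-\infty, -\sqrt{2}-\delta)$ and $N$ sufficiently large, $x$ still lies in an interval of the form $(-\infty, -\sqrt{2}-\delta')$ with $\delta' \in (0,\delta_0)$, so Theorem \ref{THM PR} at index $N-1$ applies and yields
\[\phi_{N-1}(\sqrt{N}y) = (-1)^{N-2}\,\frac{e^{-(N-1)I_1(-\sqrt{N/(N-1)}\,y)}}{\sqrt{4\pi\sqrt{2(N-1)}}}\,h\!\left(\sqrt{N/(N-1)}\,y\right)\bigl(1+\mathcal{O}(N^{-1})\bigr).\]
The task then reduces to simplifying each factor to match the claimed formula.

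Next I would Taylor expand. From $\sqrt{N/(N-1)}-1 = \tfrac{1}{2(N-1)}+\mathcal{O}(N^{-2})$ together with $I_1'(z)=\sqrt{z^2-2}$, one obtains, for $y$ in any fixed compact subset of $(-\infty, -\sqrt{2}-\delta)$,
\[(N-1)\,I_1\bigl(-\sqrt{N/(N-1)}\,y\bigr) = (N-1)\,I_1(-y) - \tfrac{1}{2}\,y\sqrt{y^2-2} + \mathcal{O}(N^{-1}),\]
which rearranges to $N I_1(-y) - I_1(-y) - \tfrac{1}{2} y\sqrt{y^2-2} + \mathcal{O}(N^{-1})$. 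Similarly, smoothness and boundedness of $h$ on the domain give $h(\sqrt{N/(N-1)}\,y) = h(y)(1+\mathcal{O}(N^{-1}))$, and $\sqrt{4\pi\sqrt{2(N-1)}} = \sqrt{4\pi\sqrt{2N}}\,(1+\mathcal{O}(N^{-1}))$. The key algebraic identity that makes everything fit is
\[I_1(z) - \tfrac{1}{2}\,z\sqrt{z^2-2} = \log\frac{\sqrt{2}}{z+\sqrt{z^2-2}}, \qquad z > \sqrt{2},\]
obtained by integrating by parts in the definition of $I_1$: since $\int_{\sqrt{2}}^{z} t^2/\sqrt{t^2-2}\,dt = I_1(z) + 2\log\bigl((z+\sqrt{z^2-2})/\sqrt{2}\bigr)$, one has $I_1(z) = z\sqrt{z^2-2} - I_1(z) - 2\log\bigl((z+\sqrt{z^2-2})/\sqrt{2}\bigr)$. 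Applied at $z = -y$, this identity converts $e^{I_1(-y)+ y\sqrt{y^2-2}/2}$ into $\sqrt{2}/(\sqrt{y^2-2}-y)$, whose extra factor of $\sqrt{2}$ then combines with $\sqrt{4\pi\sqrt{2N}}$ to produce the required $\sqrt{2\pi\sqrt{2N}}$. Assembling the pieces, with the sign $(-1)^{N-2}$ inherited directly from Theorem \ref{THM PR} at index $N-1$, gives the stated asymptotic.

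The main technical point to watch is uniformity of the Taylor remainder in $y$: the second-order contribution to the exponent is of order $|y|^{2}/N$, so the $\mathcal{O}(N^{-1})$ bound is genuine only on compact subsets of $(-\infty, -\sqrt{2}-\delta)$. This is harmless for the applications in Section \ref{sec.asymptotic}, where the argument $y$ lives in the precompact set $\tilde{E}_m$; if truly full uniformity over the unbounded interval were needed, a sharper version of the Plancherel--Rotach remainder from Theorem \ref{THM PR} would have to be invoked.
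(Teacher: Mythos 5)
Your proposal is correct and follows essentially the same route as the paper: rescale $\sqrt{N}y=\sqrt{N-1}\cdot\sqrt{N/(N-1)}\,y$, apply the Plancherel--Rotach asymptotics (Theorem \ref{THM PR}) at index $N-1$, expand the exponent $(N-1)I_1\bigl(-\sqrt{N/(N-1)}\,y\bigr)$ around $-y$, and use the closed form $I_1(z)=\tfrac{1}{2}z\sqrt{z^2-2}-\log\bigl((z+\sqrt{z^2-2})/\sqrt{2}\bigr)$ to convert the first-order correction $e^{I_1(-y)+\frac{1}{2}y\sqrt{y^2-2}}$ into $\sqrt{2}/(\sqrt{y^2-2}-y)$, which absorbs the $\sqrt{2}$ into the normalizing constant. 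Your caveat about the remainder scaling like $|y|^2/N$ is a fair observation; the paper states the same $\mathcal{O}(N^{-1})$ uniformity and faces the same subtlety, but it is indeed immaterial for the use in Section \ref{sec.asymptotic}, where $y$ ranges over a bounded set.
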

\begin{proof}
	Note that $\lim_{N\to \infty}\sqrt{\frac{N}{N-1}}y=y$, so for $N$ large enough and $y\in \left(-\infty,-\sqrt{2}-\delta\right)$, we use Theorem \ref{THM PR} to derive
	\begin{align}
		&\phi_{N-1}\left(\sqrt{N}y\right)=\phi_{N-1}\left(\sqrt{N-1}\frac{\sqrt{N}}{\sqrt{N-1}}y\right)\nonumber\\
		&=\left(-1\right)^{N-2}\frac{e^{-\left(N-1\right)I_1\left(-\frac{\sqrt{N}}{\sqrt{N-1}}y\right)}}{\sqrt{4\pi\sqrt{2N}}}h\left(\frac{\sqrt{N}}{\sqrt{N-1}}y\right)\left(1+\mathcal{O}\left(N^{-1}\right)\right)\nonumber\\
		&=\left(-1\right)^{N-2}\frac{e^{-NI_1\left(-y\right)}e^{I_1\left(-y\right)}e^{-\left(N-1\right)\int_{-y}^{-\sqrt{\frac{N}{N-1}}y}}\sqrt{t^2-2}dt}{\sqrt{4\pi\sqrt{2N}}}h\left(\frac{\sqrt{N}}{\sqrt{N-1}}y\right)\left(1+\mathcal{O}\left(N^{-1}\right)\right)\label{eq_appA_0}
	\end{align}
Since 
\begin{align}
	h\left(\frac{\sqrt{N}}{\sqrt{N-1}}y\right)=h\left(y\right)\left(1+\mathcal{O}\left(N^{-1}\right)\right) \label{eq_appA_1}
\end{align}
and
\begin{align}
	e^{I_1\left(-y\right)}e^{-\left(N-1\right)\int_{-y}^{-\sqrt{\frac{N}{N-1}}y}}\sqrt{t^2-2}dt
&=e^{I_1\left(-y\right)}e^{\frac{y\sqrt{y^2-2}}{2}\left(1+\mathcal{O}\left(N^{-1}\right)\right)}\nonumber\\
&=\frac{\sqrt{2}}{\sqrt{y^2-2}-y}\left(1+\mathcal{O}\left(N^{-1}\right)\right)\label{eq_appA_2}
\end{align}
uniformly for $y\in \left(-\infty,-\sqrt{2}-\delta\right)$, combining Eq.(\ref{eq_appA_0}), (\ref{eq_appA_1}) and (\ref{eq_appA_2}) we prove Lemma \ref{lem_PR}.

\end{proof}

\section{Mathematical Analysis on Thresholds}\label{key}
In this section we discuss the existence and values of $\lambda_{tr}, \lambda^{\left(1\right)}\left(p,k\right)$ and  $\lambda^{\left(2\right)}\left(p,k\right)$.

\begin{lemma}\label{lem.UniqueSol}
If $k\leq 2$, then Eq.$\left(\ref{eq:minima}\right)$ has a unique solution on $\left(0,1\right]$ for any $\lambda>0$. If $k>2$, then Eq.$\left(\ref{eq:minima}\right)$ has a solution if and only if $\lambda\geq \sqrt{p\frac{\left(k-1\right)^{k-1}}{\left(k-2\right)^{k-2}}}$. In particular, when $k>2$ and $\lambda\geq \sqrt{p\frac{\left(k-1\right)^{k-1}}{\left(k-2\right)^{k-2}}}$, the solution on $\left[\sqrt{\frac{k-2}{k-1}},1\right)$ is unique. 
\end{lemma}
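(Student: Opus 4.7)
The plan is to reduce Eq.~\eqref{eq:minima} to a one-dimensional monotonicity analysis. For $m\in(0,1)$, I divide both sides of Eq.~\eqref{eq:minima} by $m^2$ (nonzero on this range) and rewrite the equation as
\[
\lambda\,\varphi(m) \;=\; \sqrt{p}, \qquad \varphi(m) := m^{k-2}\sqrt{1-m^2}.
\]
The endpoint $m=1$ is never a solution because the right-hand side of \eqref{eq:minima} diverges there, so solutions on $(0,1]$ are exactly preimages of the constant $\sqrt{p}/\lambda$ under $\varphi$. The lemma then follows from reading off the graph of $\varphi$.

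I would compute
\[
\varphi'(m) \;=\; \frac{m^{k-3}\bigl((k-2)-(k-1)m^2\bigr)}{\sqrt{1-m^2}}
\]
and split into cases. For $k=1$ the numerator is negative throughout $(0,1)$, so $\varphi$ is strictly decreasing with $\varphi(0^+)=+\infty$ and $\varphi(1^-)=0$, hence a bijection $(0,1)\to(0,\infty)$ and \eqref{eq:minima} admits a unique solution for every $\lambda>0$. For $k=2$, $\varphi(m)=\sqrt{1-m^2}$ is strictly decreasing from $1$ to $0$, so there is at most one solution in $(0,1)$ (and exactly one when $\sqrt{p}/\lambda<1$). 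This handles the case $k\leq 2$.

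For $k>2$, the derivative has a unique interior zero at $m_c:=\sqrt{(k-2)/(k-1)}$; $\varphi$ increases on $(0,m_c)$, decreases on $(m_c,1)$, vanishes at both endpoints, and attains its global maximum
\[
\varphi(m_c) \;=\; \frac{(k-2)^{(k-2)/2}}{(k-1)^{(k-1)/2}}.
\]
Hence \eqref{eq:minima} has a solution in $(0,1)$ iff $\sqrt{p}/\lambda \leq \varphi(m_c)$, which rearranges to $\lambda \geq \sqrt{p\,(k-1)^{k-1}/(k-2)^{k-2}}$, matching $\lambda^{(1)}(p,k)$. Strict monotonicity of $\varphi$ on $[m_c,1)$ immediately yields uniqueness of the solution on $[\sqrt{(k-2)/(k-1)},1)$, establishing the "largest solution" statement.

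No serious obstacle arises: the argument is elementary calculus. The only items that warrant care are the correct identification of the endpoint limits $\varphi(0^+)$ and $\varphi(1^-)$ in each case (so that the range of $\varphi$ on each monotonicity interval is pinned down), and the verification via the sign of $\varphi'$ that $m_c$ is indeed a global—not merely local—maximum, which then makes the threshold $\lambda^{(1)}(p,k)$ sharp.
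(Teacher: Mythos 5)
Your proof is correct and follows essentially the same route as the paper's: reduce Eq.~\eqref{eq:minima} to a one-variable monotonicity analysis. The paper works with $g(m)=\lambda^2 m^{2k-4}(1-m^2)/p$ and asks when $g(m)=1$, while you work with $\varphi(m)=m^{k-2}\sqrt{1-m^2}$ and ask when $\lambda\varphi(m)=\sqrt p$; these are the same function up to squaring and scaling, and the critical point $m_c=\sqrt{(k-2)/(k-1)}$, the threshold value $\lambda^{(1)}(p,k)$, and the uniqueness argument on $[m_c,1)$ all come out identically. The only cosmetic difference is that for $k=1,2$ the paper writes down the explicit solutions $m_*=\sqrt{(\lambda^2/p)/(1+\lambda^2/p)}$ and $m_*=\sqrt{1-p/\lambda^2}$, whereas you argue purely by monotonicity; both are fine.

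One small point worth flagging, which you correctly noticed in passing: for $k=2$ your analysis shows $\varphi$ decreases from $1$ (not $+\infty$) to $0$, so a solution in $(0,1]$ exists only when $\lambda>\sqrt p$. This means the lemma as literally worded (``unique solution on $(0,1]$ for any $\lambda>0$'' when $k\le 2$) is slightly imprecise for $k=2$; the paper's explicit formula $m_*=\sqrt{1-p/\lambda^2}$ carries the same implicit restriction $\lambda\ge\sqrt p$, so this is a latent imprecision in the statement, not a defect of your argument. Your parenthetical ``(and exactly one when $\sqrt p/\lambda<1$)'' is the accurate reading.
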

\begin{proof}
	When $k=1, 2$, $m_*$ can be solved explicitly from Eq.$\left(\ref{eq:minima}\right)$ as below.
	\begin{equation}\label{eq-m*12}
		m_*\left(\lambda\right)=\begin{cases}
			\sqrt{\frac{\lambda^2}{p}/\left(1+\frac{\lambda^2}{p}\right)}, & k=1 \\
			\sqrt{1-\frac{p}{\lambda^2}}, & k=2.\\
		\end{cases}
	\end{equation}
When $k>2$, let $g\left(m\right)=\frac{\lambda^2m^{2k-4}\left(1-m^2\right)}{p}$. We compute
\begin{equation*}
	g^\prime\left(m\right)=\frac{2\left(k-2\right)\lambda^2}{p}\left(1-\frac{k-1}{k-2}m^2\right)m^{2k-5}.
\end{equation*}
Therefore, Eq. $\left(\ref{eq:minima}\right)$ has a solution on $\left(0,1\right]$ if and only if $g_{\max}=g\left(\sqrt{\frac{k-2}{k-1}}\right)\geq 1$ if and only if $\lambda\geq \sqrt{p\frac{\left(k-1\right)^{k-1}}{\left(k-2\right)^{k-2}}}$. Moreover, when the solution exists on $\left[\sqrt{\frac{k-2}{k-1}},1\right]$, it is unique. 	
\end{proof}
In the next lemma we study the values of $\lambda^{\left(2\right)}\left(p,k\right)$. 
\begin{lemma}\label{lem.highlatitude}
	For any integers $p\geq 3$, $k\geq1$, there exists $\lambda^{\left(2\right)}:=\lambda^{\left(2\right)}\left(p,k\right)>0$ such that $m_*\left(\lambda\right)<m_{\lambda}$ when $\lambda<\lambda^{\left(2\right)}$ and $m_*\left(\lambda\right)\geq m_{\lambda}$ when  $\lambda\geq\lambda^{\left(2\right)}$.
\end{lemma}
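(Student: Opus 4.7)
The plan is to show that $F(\lambda):=m_*(\lambda)-m_\lambda$ is continuous and monotone on the domain where $m_*$ is defined, and then invoke the intermediate value theorem. In what follows I treat the domain of $m_*$ as $(0,\infty)$ when $k\leq 2$ and as $[\lambda^{(1)},\infty)$ when $k\geq 3$, as specified by Lemma \ref{lem.UniqueSol}.

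First I will establish that $m_*(\lambda)$ is strictly increasing in $\lambda$. For $k\leq 2$ this follows directly from the explicit formulas in Lemma \ref{lem.UniqueSol}. For $k\geq 3$ the relevant root lies on the branch $m_*\geq\sqrt{(k-2)/(k-1)}$, again by Lemma \ref{lem.UniqueSol}. Implicit differentiation of the defining equation $\lambda m_*^k/\sqrt{p}=m_*^2/\sqrt{1-m_*^2}$, after using this equation to eliminate $\lambda$ on the right-hand side, yields the clean identity
\begin{equation*}
m_*'(\lambda)=\frac{m_*^{k-1}(1-m_*^2)^{3/2}}{\sqrt{p}\bigl[(k-1)m_*^2-(k-2)\bigr]}.
\end{equation*}
The denominator is strictly positive for $m_*>\sqrt{(k-2)/(k-1)}$ and vanishes only at the unique ``turning point'' $\lambda=\lambda^{(1)}$, so $m_*$ is strictly increasing on the interior of its domain.

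Next, $m_\lambda$ is clearly continuous and non-increasing: it equals $1$ for $\lambda\leq\lambda_0:=(p-2)\sqrt{p/(p-1)}$ and equals $(\lambda_0/\lambda)^{1/k}$ for $\lambda>\lambda_0$, strictly decreasing to $0$. Hence $F$ is continuous and strictly increasing on its domain. For the boundary behavior, Eq. \eqref{eq:minima} forces $m_*(\lambda)\to 1$ as $\lambda\to\infty$ (for $k\geq 3$ this is observed in the proof of Theorem \ref{Prop_lamda}; for $k\leq 2$ it follows from \eqref{eq-m*12}), while $m_\lambda\to 0$, so $F(\lambda)\to 1>0$. At the lower end of the domain, $F(0^+)=-1$ when $k\leq 2$, and when $k\geq 3$ one has $F(\lambda^{(1)})=\sqrt{(k-2)/(k-1)}-m_{\lambda^{(1)}}$, which may be of either sign depending on $(p,k)$.

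To conclude I distinguish two cases. If $F(\lambda^{(1)})\geq 0$ (possible only when $k\geq 3$), I set $\lambda^{(2)}:=\lambda^{(1)}>0$; by strict monotonicity $F(\lambda)\geq 0$ for all $\lambda\geq\lambda^{(2)}$, and the inequality ``$m_*(\lambda)<m_\lambda$ for $\lambda<\lambda^{(2)}$'' is vacuous since $m_*$ is undefined there. Otherwise $F$ starts strictly negative, is continuous and strictly increasing, and tends to $+1$ at infinity, so the IVT furnishes a unique $\lambda^{(2)}>\lambda^{(1)}$ with $F(\lambda^{(2)})=0$; by strict monotonicity this $\lambda^{(2)}$ has the claimed sign change of $F$ across it. The main technical point is the sign analysis of the denominator $(k-1)m_*^2-(k-2)$ in the expression for $m_*'(\lambda)$, which hinges on having identified the correct branch of the cubic-type equation in Lemma \ref{lem.UniqueSol}; once that is pinned down the rest is a routine IVT argument.
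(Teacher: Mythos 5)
Your proof is correct, and it is a more unified and, in one respect, more complete version of the paper's argument. The paper treats $k=1$ and $k=2$ by explicitly solving the inequality $m_*(\lambda)\leq m_\lambda$ (yielding closed-form thresholds), and for $k>2$ it simply asserts in one sentence that existence follows because ``$m_*(\lambda)$ increases to $1$ and $m_\lambda$ decreases to $0$,'' without verifying the monotonicity of $m_*$. You instead organize the whole proof around the monotonicity of $F=m_*-m_\lambda$, and for $k\geq 3$ you actually establish the needed strict monotonicity via the implicit-differentiation formula
\begin{equation*}
m_*'(\lambda)=\frac{m_*^{k-1}(1-m_*^2)^{3/2}}{\sqrt{p}\,\bigl[(k-1)m_*^2-(k-2)\bigr]},
\end{equation*}
together with the observation that $m_*$ lives on the branch $m_*\geq\sqrt{(k-2)/(k-1)}$ so the denominator is positive. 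That computation does appear in the paper, but in the proof of Theorem~\ref{Prop_lamda}, not here, so your proposal makes the argument locally self-contained. Your case split at the lower endpoint, distinguishing $F(\lambda^{(1)})\geq 0$ (set $\lambda^{(2)}=\lambda^{(1)}$; this is exactly the $p\leq k$ situation of Lemma~\ref{lem.compare}) from $F(\lambda^{(1)})<0$ (IVT), is clean and correct. The price of the uniform treatment is that you do not recover the closed-form thresholds the paper writes down for $k=1,2$, but the lemma as stated does not require them.

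One small imprecision, inherited from the paper: for $k=2$ the equation $\lambda m^2/\sqrt{p}=m^2/\sqrt{1-m^2}$ has no solution in $(0,1]$ when $\lambda<\sqrt{p}$, so the domain of $m_*$ actually begins at $\sqrt{p}$ rather than at $0^+$ as Lemma~\ref{lem.UniqueSol} suggests. Your statement ``$F(0^+)=-1$ when $k\leq 2$'' should therefore read, for $k=2$, that $F(\sqrt{p}^{\,+})=0-m_{\sqrt{p}}<0$. This does not affect the argument — $F$ is still strictly negative at the left endpoint of the domain and the IVT step goes through unchanged — but it is worth flagging since you explicitly invoked Lemma~\ref{lem.UniqueSol} for the domain.
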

\begin{proof}
	When $k=1$, from Eq.$\left(\ref{eq-m*12}\right)$ and Eq. $\left(\ref{eq:lowhighlatitude}\right)$ we derive
	$m_*\left(\lambda\right)\leq m_{\lambda}$ if and only if  $\left(\lambda^2/p\right)^2-\frac{\left(p-2\right)^2}{p-1}\frac{\lambda^2}{p}-\frac{\left(p-2\right)^2}{p-1}<0$ if and only if 
	\begin{align*}
		\lambda\leq \sqrt{\frac{p\left(\frac{\left(p-2\right)^2}{p-1}+\sqrt{\frac{\left(p-2\right)^4}{\left(p-1\right)^2}+\frac{4\left(p-2\right)^2}{p-1}}\right)}{2}}.
	\end{align*}
When $k=2$, again from Eq.$\left(\ref{eq-m*12}\right)$ and Eq. $\left(\ref{eq:lowhighlatitude}\right)$ we derive
$m_*\left(\lambda\right)\leq m_{\lambda}$ if and only if $\frac{p}{\lambda^2}+\frac{p-2}{\sqrt{p-1}}\frac{\sqrt{p}}{\lambda}-1>0$ if and only if
\[\lambda\leq \frac{2\sqrt{p}}{\frac{2-p}{\sqrt{p-1}}+\sqrt{4+\frac{\left(p-2\right)^2}{p-1}}}.\]
When $k>2$, the existence of $\lambda^{\left(2\right)}$ is guaranteed by the fact that $m_*\left(\lambda\right)$ increases to 1 and $m_\lambda$ decreases to 0.
\end{proof}
\begin{lemma}\label{lem.compare}
	For any $p\geq 3$ and $k\geq 1$, $\lambda^{\left(1\right)}\left(p,k\right)=\lambda^{\left(2\right)}\left(p,k\right)$ if and only if $p\leq k$.
\end{lemma}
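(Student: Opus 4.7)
The plan is to exploit the monotonicity of $m_*(\lambda)$ and $m_\lambda$ to reduce the equality $\lambda^{(1)}=\lambda^{(2)}$ to an algebraic inequality at the single point $\lambda = \lambda^{(1)}$. Implicit differentiation of \eqref{eq:minima} (as carried out in the proof of Theorem \ref{Prop_lamda}) shows $m_*(\lambda)$ is strictly increasing on $[\lambda^{(1)},\infty)$, while $m_\lambda$ is non-increasing and continuous in $\lambda$ by \eqref{eq:lowhighlatitude}. Both are continuous, so the set $\{\lambda \geq \lambda^{(1)} : m_*(\lambda) \geq m_\lambda\}$ is a closed half-line, and
\begin{equation*}
\lambda^{(1)} = \lambda^{(2)} \text{ if and only if } m_*(\lambda^{(1)}) \geq m_{\lambda^{(1)}}.
\end{equation*}

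For $k\in\{1,2\}$ we have $p \geq 3 > k$ and $\lambda^{(1)}=0$; formula \eqref{eq-m*12} gives $m_*(0) = 0$ while $m_0 = 1$ by \eqref{eq:lowhighlatitude}, so the criterion fails and $\lambda^{(1)}<\lambda^{(2)}$, matching the claim in this range.

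For $k > 2$, the analysis of $g(m) = \lambda^2 m^{2k-4}(1-m^2)/p$ in the proof of Lemma \ref{lem.UniqueSol} identifies $\lambda^{(1)}=\sqrt{p(k-1)^{k-1}/(k-2)^{k-2}}$ and $m_*(\lambda^{(1)}) = \sqrt{(k-2)/(k-1)}$. Assume first the explicit branch of \eqref{eq:lowhighlatitude} is active, i.e. $m_{\lambda^{(1)}}<1$. Substituting the formulas yields $m_{\lambda^{(1)}}^k = (p-2)(k-2)^{(k-2)/2}/\bigl(\sqrt{p-1}\,(k-1)^{(k-1)/2}\bigr)$, so the criterion $m_*(\lambda^{(1)})^k \geq m_{\lambda^{(1)}}^k$ collapses to $(k-2)\sqrt{p-1} \geq (p-2)\sqrt{k-1}$. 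Squaring and expanding produces the key identity
\begin{equation*}
(k-2)^2(p-1) - (p-2)^2(k-1) = (k-p)(kp - k - p),
\end{equation*}
and since $kp-k-p = (k-1)(p-1)-1 > 0$ for $p,k\geq 3$, the sign is controlled entirely by $k-p$; hence on this branch the criterion is equivalent to $k\geq p$.

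The main bookkeeping obstacle is tracking when the explicit branch above is in force, i.e. when $m_{\lambda^{(1)}}<1$; this is equivalent to $(p-1)(k-1)^{k-1} \geq (p-2)^2(k-2)^{k-2}$. I would dispatch this by observing that $p\mapsto (p-1)/(p-2)^2$ is decreasing on $(2,\infty)$, so under $p\leq k$ it suffices to check the endpoint $p=k$, which reduces to the trivial inequality $(k-1)^k \geq (k-2)^k$. Thus every $p\leq k$ lies in the explicit-branch case, where the preceding paragraph gives $\lambda^{(1)}=\lambda^{(2)}$. In the complementary situation $m_{\lambda^{(1)}}=1$ (which, by what we just showed, forces $p > k$) the criterion fails trivially since $m_*(\lambda^{(1)}) = \sqrt{(k-2)/(k-1)} < 1$, yielding $\lambda^{(1)}<\lambda^{(2)}$; combined with the $p > k$ sub-case of the explicit-branch analysis, this closes the converse direction and completes the equivalence.
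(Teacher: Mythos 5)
Your proposal is correct and is essentially a cleaner, more uniform version of the paper's argument. Both proofs reduce the lemma to a single comparison at $\lambda=\lambda^{(1)}$: whether $m_*\left(\lambda^{(1)}\right)=\sqrt{(k-2)/(k-1)}$ (for $k>2$) dominates $m_{\lambda^{(1)}}$, and both arrive at the inequality $(p-2)^2(k-1)\lessgtr (k-2)^2(p-1)$. The paper then splits by direction: for $p\le k$ it raises the ratio $m_{\lambda^{(1)}}/\sqrt{(k-2)/(k-1)}$ to the power $2k$ and compares to $1$, while for $p>k$ it invokes a separate monotonicity-in-$k$ argument for $g\left(m_{\lambda^{(1)}(p,k)}\right)=\frac{(p-2)^2}{p-1}\cdot\frac{1-m^2_{\lambda^{(1)}}}{m^4_{\lambda^{(1)}}}$, pinning things to the diagonal $p=k$ where $g=1$. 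You instead factor the key difference directly, $(k-2)^2(p-1)-(p-2)^2(k-1)=(k-p)(kp-k-p)$ with $kp-k-p>0$, which reads off the sign in one stroke and settles both implications of the "if and only if" simultaneously; this is tidier, avoids the monotonicity detour (where the paper also has a small typo, claiming $(1-x)/x^2$ is \emph{increasing} when it is in fact decreasing, though the conclusion still holds), and has the advantage of not requiring any separate argument to determine on which side of the peak of $g$ the point $m_{\lambda^{(1)}}$ sits. You also explicitly verify that the truncation $m_\lambda=\min\{1,\cdot\}$ does not interfere (i.e.\ that $m_{\lambda^{(1)}}<1$ when $p\le k$), a step the paper handles implicitly. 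One small imprecision: your stated equivalence "$m_{\lambda^{(1)}}<1$ iff $(p-1)(k-1)^{k-1}\ge (p-2)^2(k-2)^{k-2}$" should be a strict inequality on the right for a genuine iff, but this does not affect the argument since the borderline equality case lands in your complementary branch.
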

\begin{proof}
	When $k=1,2$, using Lemma $\ref{lem.UniqueSol}$ and \ref{lem.highlatitude} we have $\lambda^{\left(1\right)}=0<\lambda^{\left(2\right)}$. Therefore from now on we assume $p,k\geq 3$. 
	
	When $p\leq  k$, 
	\begin{align*}
		\left(\frac{m_{\lambda^{\left(1\right)}\left(p,k\right)}}{\sqrt{\frac{k-2}{k-1}}}\right)^{2k}=\frac{\left(p-2\right)^2\left(k-1\right)}{\left(k-2\right)^2\left(p-1\right)}\leq 1.
	\end{align*}
By Lemma \ref{lem.UniqueSol}, $m_{\lambda^{\left(1\right)}\left(p,k\right)}\leq \sqrt{\frac{k-2}{k-1}}\leq m_{*}$ and thus $\lambda^{\left(2\right)}=\lambda^{\left(1\right)}.$

When $p>k$, it suffices to show $g\left(m_{\lambda^{\left(1\right)}\left(p,k\right)}\right)<1 $. A direct computation gives
	\begin{align*}
	g\left(m_{\lambda^{\left(1\right)}\left(p,k\right)}\right)=\frac{\left(p-2\right)^2}{p-1}\cdot \frac{1-m^2_{\lambda^{\left(1\right)}\left(p,k\right)}}{m^4_{\lambda^{\left(1\right)}\left(p,k\right)}}.
\end{align*}
Since $f\left(x\right)=\frac{1-x}{x^2}$ is increasing on $\left[0,1\right]$, and for each fixed $p$, 
\[\log\left(m_{\lambda^{\left(1\right)}\left(p,k\right)}\right)=\frac{1}{k}\left(\log\left(\frac{p-2}{\sqrt{p-1}}\right)-\log\left(\frac{k-2}{\sqrt{k-1}}\right)\right)+\log\left(\frac{k-2}{k-1}\right)\]
is decreasing for $k<p$. 

Therefore, 
\begin{align*}
	g\left(m_{\lambda^{\left(1\right)}\left(p,k\right)}\right)<g\left(m_{\lambda^{\left(1\right)}\left(p,p\right)}\right)=\frac{\left(p-2\right)^2}{p-1}\cdot \frac{1-\frac{p-2}{p-1}}{\left(\frac{p-2}{p-1}\right)^2}=1.
\end{align*}
\end{proof}
\color{black}

\bibliographystyle{acm}
\bibliography{referencePCA}

\begin{thebibliography}{10}

\bibitem{AT2007}
{\sc Adler, R.~J., and Taylor, J.~E.}
\newblock {\em Random Fields and Geometry}.
\newblock Springer Monographs in Mathematics. Springer-Verlag, New York, 2007.

\bibitem{AB13}
{\sc Auffinger, A., and Ben~Arous, G.}
\newblock Complexity of random smooth functions on the high-dimensional sphere.
\newblock {\em Ann. Probab. 41}, 6 (2013), 4214--4247.

\bibitem{ABC13}
{\sc Auffinger, A., Ben~Arous, G., and Černý, J.}
\newblock Random matrices and complexity of spin glasses.
\newblock {\em Communications on Pure and Applied Mathematics 66}, 2 (2013),
  165--201.

\bibitem{AJW09}
{\sc Aza\"{i}s, J.-M., and Wschebor, M.}
\newblock {\em Level Sets and Extrema of Random Processes and Fields}.
\newblock John Wiley \& Sons, Inc., Hoboken, NJ, 2009.

\bibitem{belius21}
{\sc Belius, D., Černý, J., Nakajima, S., and Schmidt, M.}
\newblock Triviality of the geometry of mixed $p$-spin spherical hamiltonians
  with external field, 2021.

\bibitem{GBA}
{\sc Ben~Arous, G., and Guionnet, A.}
\newblock Large deviations for {W}igner's law and {V}oiculescu's
  non-commutative entropy.
\newblock {\em Probability Theory and Related Fields 108}, 4 (1997), 517--542.

\bibitem{BMMN18}
{\sc Ben~Arous, G., Mei, S., Montanari, A., and Nica, M.}
\newblock The landscape of the spiked tensor model.
\newblock {\em Communications on Pure and Applied Mathematics 72}, 11 (2019),
  2282--2330.

\bibitem{Alice}
{\sc Benaych-Georges, F., Guionnet, A., and Maida, M.}
\newblock Large deviations of the extreme eigenvalues of random deformations of
  matrices.
\newblock {\em Probability Theory and Related Fields 154}, 3 (2012), 703--751.

\bibitem{chen2019}
{\sc Chen, W.-K.}
\newblock Phase transition in the spiked random tensor with rademacher prior.
\newblock {\em Ann. Statist. 47}, 5 (10 2019), 2734--2756.

\bibitem{CS2015}
{\sc Chen, W.-K., and Sen, A.}
\newblock Parisi formula, disorder chaos and fluctuation for the ground state
  energy in the spherical mixed p-spin models.
\newblock {\em Communications in Mathematical Physics 350\/} (12 2015).

\bibitem{DesLiu15}
{\sc Desrosiers, P., and Liu, D.-Z.}
\newblock Scaling limits of correlations of characteristic polynomials for the
  {G}aussian {$\beta$}-ensemble with external source.
\newblock {\em Int. Math. Res. Not. IMRN}, 12 (2015), 3751--3781.

\bibitem{Fyodorov13}
{\sc Fyodorov, Y.~V.}
\newblock High-dimensional random fields and random matrix theory.
\newblock {\em Markov Processes and Related Fields 21\/} (07 2013).

\bibitem{Fyodorov16}
{\sc Fyodorov, Y.~V.}
\newblock Topology trivialization transition in random non-gradient autonomous
  {ODEs} on a sphere.
\newblock {\em Journal of Statistical Mechanics: Theory and Experiment 2016},
  12 (dec 2016), 124003.

\bibitem{Fyodorov14}
{\sc Fyodorov, Y.~V., and Le~Doussal, P.}
\newblock Topology trivialization and large deviations for the minimum in the
  simplest random optimization.
\newblock {\em Journal of Statistical Physics 154}, 1 (2014), 466--490.

\bibitem{Gillin00}
{\sc Gillin, P., and Sherrington, D.}
\newblock p $>$ 2 spin glasses with first-order ferromagnetic transitions.
\newblock {\em Journal of Physics A: Mathematical and General 33}, 16 (apr
  2000), 3081--3091.

\bibitem{JLM18}
{\sc Jagannath, A., Lopatto, P., and Miolane, L.}
\newblock Statistical thresholds for tensor pca.
\newblock {\em Ann. Appl. Probab. 30}, 4 (08 2020), 1910--1933.

\bibitem{Jagannath17}
{\sc Jagannath, A., and Tobasco, I.}
\newblock Low temperature asymptotics of spherical mean field spin glasses.
\newblock {\em Communications in Mathematical Physics 352}, 3 (2017),
  979--1017.

\bibitem{maillard20a}
{\sc Maillard, A., Ben~Arous, G., and Biroli, G.}
\newblock Landscape complexity for the empirical risk of generalized linear
  models.
\newblock In {\em Proceedings of The First Mathematical and Scientific Machine
  Learning Conference\/} (Princeton University, Princeton, NJ, USA, 20--24 Jul
  2020), J.~Lu and R.~Ward, Eds., vol.~107 of {\em Proceedings of Machine
  Learning Research}, PMLR, pp.~287--327.

\bibitem{Mannellil19a}
{\sc Mannelli, S.~S., Krzakala, F., Urbani, P., and Zdeborov\'{a}, L.}
\newblock Passed and spurious: Descent algorithms and local minima in spiked
  matrix-tensor models.
\newblock K.~Chaudhuri and R.~Salakhutdinov, Eds., vol.~97, PMLR,
  pp.~4333--4342.

\bibitem{Montanari:2014}
{\sc Montanari, A., and Richard, E.}
\newblock A statistical model for tensor pca.
\newblock {\em Proceedings of the 27th International Conference on Neural
  Information Processing Systems - Volume 2\/} (2014), 2897--2905.

\bibitem{perry2020}
{\sc Perry, A., Wein, A.~S., and Bandeira, A.~S.}
\newblock Statistical limits of spiked tensor models.
\newblock {\em Ann. Inst. H. Poincaré Probab. Statist. 56}, 1 (02 2020),
  230--264.

\bibitem{BBCR18}
{\sc Ros, V., Ben~Arous, G., Biroli, G., and Cammarota, C.}
\newblock Complex energy landscapes in spiked-tensor and simple glassy models:
  Ruggedness, arrangements of local minima, and phase transitions.
\newblock {\em Phys. Rev. X 9\/} (Jan 2019), 011003.

\bibitem{Mannellil20}
{\sc Sarao~Mannelli, S., Biroli, G., Cammarota, C., Krzakala, F., Urbani, P.,
  and Zdeborov\'a, L.}
\newblock Marvels and pitfalls of the langevin algorithm in noisy
  high-dimensional inference.
\newblock {\em Phys. Rev. X 10\/} (Mar 2020), 011057.

\bibitem{Mannellil19b}
{\sc Sarao~Mannelli, S., Biroli, G., Cammarota, C., Krzakala, F., and
  Zdeborov\'{a}, L.}
\newblock Who is afraid of big bad minima? {A}nalysis of gradient-flow in
  spiked matrix-tensor models.
\newblock In {\em Advances in Neural Information Processing Systems 32}. Curran
  Associates, Inc., 2019, pp.~8679--8689.

\end{thebibliography}
  
\end{document}